\newtheorem{theorem}{Theorem}[section]
\newtheorem{lemma}[theorem]{Lemma}
\newtheorem{proposition}[theorem]{Proposition}
\newtheorem{definition}[theorem]{Definition}
\newtheorem{remark}[theorem]{Remark}
\newtheorem{hypothesis}[theorem]{Hypothesis}
\let\originalleft\left
\let\originalright\right
\renewcommand{\left}{\mathopen{}\mathclose\bgroup\originalleft}
\renewcommand{\right}{\aftergroup\egroup\originalright}
\newcommand{\Tr}{\mathop{\mathrm{Tr}}}
\renewcommand{\d}{\/\mathrm{d}\/}
\def\w{\textbf{W}^{\varepsilon}_{{\theta}^{\varepsilon}}}
\def\t{t\wedge\tau_N}
\def\s{t\wedge\tau_N}
\def\L{\mathbb{L}}
\def\A{\mathrm{A}}
\def\F{\mathrm{F}}
\def\C{\mathrm{C}}
\def\B{\mathrm{B}}
\def\D{\mathrm{D}}
\def\Y{\mathrm{Y}}
\def\E{\mathbb{E}}
\def\X{\mathbb{X}}
\def\x{\boldsymbol{x}}
\def\k{\boldsymbol{k}}
\def\p{\boldsymbol{p}}
\def\h{\boldsymbol{h}}
\def\v{\boldsymbol{v}}
\def\w{\boldsymbol{w}}
\def\W{\mathrm{W}}
\def\G{\mathrm{G}}
\def\M{\mathrm{M}}
\def\N{\mathbb{N}}
\def\V{\mathbb{V}}
\def\wi{\widetilde}
\def\u{\mathrm{U}}
\def\P{\mathrm{P}}
\def\u{\boldsymbol{u}}
\def\H{\mathbb{H}}
\newcommand{\R}{\mathbb{R}}
\renewcommand{\d}{\/\mathrm{d}\/}
\newcommand{\Addresses}{{
		\footnote{
			\noindent \textsuperscript{1,2}Department of Mathematics, Indian Institute of Technology Roorkee-IIT Roorkee,
			Haridwar Highway, Roorkee, Uttarakhand 247667, INDIA.\par\nopagebreak
			\noindent  \textit{e-mail:} \texttt{Manil T. Mohan: maniltmohan@ma.iitr.ac.in, maniltmohan@gmail.com.}
			
			\textit{e-mail:} \texttt{Kush Kinra: kkinra@ma.iitr.ac.in.}
			
			\noindent \textsuperscript{*}Corresponding author.
			
			\textit{Key words:} Wong-Zakai approximation, stochastic convective Brinkman-Forchheimer equations, bounded domains, support theorem.
			
			Mathematics Subject Classification (2020): Primary 35R60, 76D03; Secondary 37H05

}}}
\begin{document}
	
	\title[Wong-Zakai Approximation and Support Theorem for stochastic CBF
	]{Wong-Zakai Approximation and Support Theorem for 2D and 3D stochastic convective Brinkman-Forchheimer equations
		\Addresses}
	
	\author[K. Kinra and M. T. Mohan]
	{Kush Kinra\textsuperscript{1} and Manil T. Mohan\textsuperscript{2*}}

	\maketitle
	
	\begin{abstract}
		In this work, we demonstrate the Wong-Zakai approximation results for two and three dimensional stochastic convective Brinkman-Forchheimer (SCBF) equations forced by Hilbert space valued Wiener noise on bounded domains. Even though the existence and  uniqueness of a pathwise strong solution to SCBF equations is known, the existence of a unique solution to the approximating system is not immediate from the solvability results of SCBF equations, and we prove it by using Faedo-Galerkin approximation technique and monotonicity arguments. Moreover, as an application of  the Wong-Zakai approximation, we obtain the support of the distribution of solutions to SCBF equations.  
	\end{abstract}

	\section{Introduction} \label{sec1}\setcounter{equation}{0}
	This work is devoted to the Wong-Zakai approximation of incompressible stochastic convective Brinkman-Forchheimer (CBF) equations (a non-Darcy model) which express the motion of fluid flows in a saturated porous medium (cf. \cite{PAM}). We consider the stochastic CBF equations in a bounded subset $\mathcal{O}\subset\mathbb{R}^d\ (d=2,3)$ as
	\begin{equation}\label{1}
		\left\{
		\begin{aligned}
			\d \u(t)&=[\mu \Delta\u(t)-(\u(t)\cdot\nabla)\u(t)-\alpha\u(t)-\beta|\u(t)|^{r-1}\u(t)-\nabla p(t)]\d t\\&\quad+\G(\u(t))\d \W(t), \ \ \ \text{ in } \mathcal{O}\times(0,\infty), \\ \nabla\cdot\u(t)&=0, \qquad  \qquad\qquad\ \ \ \ \ \text{ in } \ \mathcal{O}\times(0,\infty), \\
			\u(0)&=\x,\ \qquad\qquad\qquad\  \ \ x\in \mathcal{O} ,
		\end{aligned}
		\right.
	\end{equation}
	where $\u(x,t) :	\mathcal{O}\times(0,\infty)\to \mathbb{R}^d$ denotes the velocity field, $p(x,t):	\mathcal{O}\times(0,\infty)\to\R$ represents the pressure field, $\W(\cdot)$ is a cylindrical Wiener process in some separable Hilbert space defined on a complete filtered probability space $(\Omega,\mathscr{F},(\mathscr{F}_t)_{t\geq0},\mathbb{P})$ and the noise coefficient $\G(\cdot)$ satisfies certain growth and local monotonicity  conditions (see section \ref{sec2} for more details). Here the positive constants $\mu,\alpha$ and $\beta$ represent the \emph{Brinkman  (effective viscosity)}, \emph{Darcy (permeability of porous medium)} and \emph{Forchheimer coefficients}, respectively. The \emph{absorption exponent} $1\leq r<\infty$ and  $r=3$ is known as the \emph{critical exponent}. One can consider CBF equations as damped Navier-Stokes equations (NSE) due to the presence of linear and nonlinear damping  $\alpha\u+\beta|\u|^{r-1}\u$. Furthermore, the critical homogeneous CBF equations (when $r=3$) and NSE have the same scaling (Proposition 1.1, \cite{HR}) only when $\alpha=0$ but no scale invariance property for other values of $\alpha$ and $r$. Therefore it is also known as the tamed NSE (\cite{MRXZ}) or NSE modified by an absorption term (\cite{SNA}).
	
	Let us now discuss some literature available on the solvability of  CBF equations. 
	For the existence of unique weak as well as strong solutions for deterministic CBF equations on periodic and bounded domains, the readers are referred to the works \cite{SNA,FHR,HR,PAM,MTM}, etc. 
		For stochastic CBF equations \eqref{1}, the author in \cite{MTM2,MTM4} proved that there exists a pathwise unique strong (in the probabilistic sense) solution of 2D SCBF equations for any $r\geq1$ and 3D SCBF equations for any $r\geq3$ ($r>3$ for any $\mu,\beta>0$ and $r=3$ for $2\beta\mu\geq1$) forced by Gaussian and pure jump noise, respectively. The existence of a weak martingale solution for 2D and 3D SCBF equations perturbed by L\'evy noise (for $r\geq 1$)  is established in \cite{MTM3}. Similar to 3D stochastic NSE (resp. deterministic NSE), the existence of a unique pathwise strong  solution (resp. unique global strong solution) of 3D stochastic CBF (resp. deterministic CBF) equations (for $r\in[1,3)$ and $r=3$ with $2\beta\mu<1$) is still an open problem. 
	
	It is well known that one can approximate a solution to a stochastic differential equation (SDE) via several methods such as the Crank-Nicholson scheme, the Euler scheme and the Wong-Zakai approximation etc. and a good number of works have been done for each scheme (see \cite{HLN,HLN1,TMRZ,WZ} etc. and the references therein).  In \cite{WZ}, authors introduced a new idea to approximate the solution of an SDE driven by an one dimensional Brownian motion by the solution of an SDE when the Brownian motion is replaced by an appropriate smooth approximation and making a drift correction in the original SDE, which is now known as Wong-Zakai  approximation. The extension of work \cite{WZ} to the multidimensional case can be  found in \cite{WI}. Apart from the finite dimensional cases, Wong-Zakai approximation gained its attention in infinite dimensional case also. For example, in \cite{Ganguly,IPR,KT,KT1}, authors proved the convergence of Wong-Zakai approximation for stochastic (partial) differential equations in infinite dimensions. The author in \cite{KT1}  provided approximation results in finite as well as infinite dimensional SDEs and introduced two new forms of correction terms in the Wong-Zakai approximation.  For a class of 2D hydrodynamical models  like  NSE,  magneto-hydrodynamic (MHD) equations and  magnetic B\'enard problem etc., the Wong-Zakai approximation results  are established in \cite{ICAM}. The authors in \cite{HP} proved Wong-Zakai approximation for 1D parabolic nonlinear SPDEs driven by space-time white noise.  Wong-Zakai approximation  for several  fluid dynamics models and a class of stochastic partial differential equations (SPDEs) is available in \cite{BMM,LS,LQ,TMRZ,Yastrzhembskiy}, etc. and the references therein. 
	
	In \cite{TMRZ}, authors considered SPDEs with locally monotone coefficients driven by trace class noise which covers several physically relevant models such as 2D hydrodynamical type systems (stochastic NSE, stochasic MHD etc.), stochastic porous media equations, $p$-Laplace evolution equation etc. For stochastic CBF equations, we point out that  2D stochastic CBF equations with $r\in[1,3)$ comes under the functional framework of the work \cite{TMRZ} (see Remark \ref{rin1,3} below) due to the local monotonicity property satisfied by linear and nonlinear operators (Theorem \ref{LocMon}). But for $r\geq3$ in both two and three dimensions, one has to give special attention for the Wong-Zakai approximation of SCBF equations due to the fast growing nonlinearities. We also mention that the noise as well as the assumptions on the noise and correction term of the approximated system  in \eqref{1}  is taken same as that in \cite{TMRZ}. Therefore, same the examples of the noise given in \cite{TMRZ} (section 3, \cite{TMRZ}) will work for our model also. 
	
	There is an interesting advantage of Wong-Zakai approximation, that is, Wong-Zakai approximation helps us to describe the topological support of solutions of SDEs and SPDEs. The concept of topological support of solutions was first introduced by Stroock and Varadhan in \cite{SV}. In the works \cite{GP,WI}, the support theorem for SDEs has been studied. From \cite{GP,MS,Mackevicius}, it is clear that one can obtain the topological support of  solutions of SDEs and SPDEs with the help of Wong-Zakai approximation. In other words, we can also say that the support theorem is an application of Wong-Zakai approximation results, see \cite{ICAM,TMRZ,Yastrzhembskiy}, etc. Therefore we are also addressing the support of stochastic CBF equations by using the Wong-Zakai approximation results (see Section \ref{sec4}).
	
	In the literature, it has also been noticed that Wong-Zakai approximation is also used to prove the existence of random attractors for SPDEs when the diffusion term is nonlinear. In several fluids dynamics models, it is possible to convert SPDEs into a  pathwise deterministic system  when the noise is additive or linear multiplicative  and these converted systems helps us to define a continuous random dynamical system (cf. \cite{FY,KM1,PeriodicWang}, etc). To deal with other kind of noises  in the context of random attractors, Wong-Zakai approximation also plays an important role, see \cite{KM,LW,WLW} etc. and the references therein.
	
	An interesting question arises in the Wong-Zakai approximation is that what is the rate of convergence of this approximation? The works \cite{BKBAAL,IPR,NT} etc. deal with the rate of convergence of Wong-Zakai approximation for different SPDEs. Moreover, in \cite{IPR}, authors proved that the rate of convergence of Wong-Zakai approximation for SPDEs driven by Wiener processes is essentially the same as the rate of convergence of the driving processes (say $\W_n(t)$) approximating the original Wiener process $\W(t)$ under  certain conditions. The rate of convergence of Wong-Zakai approximation of stochastic CBF equations will be addressed in a future work.
	
The major aims  of this work are listed below: 
	\begin{enumerate}
		\item For $d=2,3$ with $r\geq3$ (for $d=r=3$ with $2\beta\mu\geq1$), we prove the Wong-Zakai approximation result for stochastic CBF equations \eqref{1} (Theorem \ref{Main}). For the case $d=2$ with $r\in[1,3)$, stochastic CBF equations satisfy the functional framework of the work \cite{TMRZ} (Remark \ref{RemarkB}) and hence the Wong-Zakai approximation result follows from the work \cite{TMRZ}.
		\item For $d=2,3$ with $r\geq3$ (for $d=r=3$ with $2\beta\mu\geq1$), we also obtain the topological support of the distribution to the stochastic CBF equations \eqref{1} with the help of Wong-Zakai approximation (Theorem \ref{ST}). 
	\end{enumerate} 
	
The organization of further sections of this article is as follows. In the next section, we define some functional spaces and operators (linear and nonlinear operators) needed to obtain an abstract formulation as well as the main results of this paper. We also define an approximating system and standard hypotheses for convergence of Wong-Zakai approximation (see Hypotheses \ref{HonG} and \ref{HonG1} below) in the same section. Finally, we provide the results for existence of a unique solution of stochastic CBF as well as its approximating system in the same section. Section \ref{sec3} is devoted to establish the main result of this article, that is, Wong-Zakai approximation results for stochastic CBF equations. In Section \ref{sec4}, we describe the support of solution of \eqref{1}, which is a consequence of the Wong-Zakai approximation result. In the final section, we prove the existence and uniqueness of strong solutions to the system \eqref{C_SCBF} (see section \ref{sec4} and Theorem \ref{4.1}). From the solvability result of \eqref{1} (Proposition \ref{E_U4}), we cannot directly conclude that the approximating system \eqref{WZ_SCBF} is solvable. Since system \eqref{WZ_SCBF} is a special case of the system \eqref{C_SCBF}, one can conclude the solvability results of the approximating system \eqref{WZ_SCBF} from Theorem \ref{4.1}.
	
	\section{Mathematical Formulation}\label{sec2}\setcounter{equation}{0}
	In this section, we provide the necessary function spaces needed to obtain the results of this work. Then we define some operators to set up an abstract formulation. We fix  $\mathcal{O}$ as a bounded subset of $\R^d$ with $\mathrm{C}^2$-boundary. 
	\subsection{Function spaces} 
	We define the space $$\mathscr{V}:=\{\u\in\C_0^{\infty}(\mathcal{O};\mathbb{R}^d):\nabla\cdot\u=0\},$$ where $\C_0^{\infty}(\mathcal{O};\mathbb{R}^d)$ denotes the space of all infinite times differentiable functions ($\mathbb{R}^d$-valued) with compact support in $\mathbb{R}^d$. Let $\H$ be the closure of $\mathscr{V}$ in space $\L^2(\mathcal{O})=\mathrm{L}^2(\mathcal{O};\R^d)$ with the norm  $\|\u\|_{\H}^2:=\int_{\mathcal{O}}|\u(x)|^2\d x,
	$ and inner product $(\u,\v)=\int_{\mathcal{O}}\u(x)\cdot\v(x)\d x,$ for all $\u,\v\in\L^2(\mathcal{O})$, respectively. Let $\V$ be the closure of $\mathscr{V}$ in space $\H_0^1(\mathcal{O})=\mathrm{H}_0^1(\mathcal{O};\R^d)$ with the norm $ \|\u\|_{\V}^2:=\int_{\mathcal{O}}|\nabla\u(x)|^2\d x,
	$ and the inner product $(\!(\u,\v)\!)=(\nabla\u,\nabla\v)=\int_{\mathcal{O}}\nabla\u(x)\cdot\nabla\v(x)\d x,$ for all $\u,\v\in\V$, respectively. Let $\widetilde{\L}^{p}$ be the closure of  $\mathscr{V}$  in space  $\L^p(\mathcal{O})=\mathrm{L}^p(\mathcal{O};\R^d),$ for $p\in(2,\infty)$, with the norm $\|\u\|_{\widetilde{\L}^p}^p=\int_{\mathcal{O}}|\u(x)|^p\d x.$ Let $\langle \cdot,\cdot\rangle $ represent the induced duality between the spaces $\V$  and its dual $\V'$ as well as $\widetilde{\L}^p$ and its dual $\widetilde{\L}^{p'}$, where $\frac{1}{p}+\frac{1}{p'}=1$. Note that $\H$ can be identified with its dual $\H'$. We endow the space $\V\cap\widetilde{\L}^{p}$ with the norm $\|\u\|_{\V}+\|\u\|_{\widetilde{\L}^{p}},$ for $\u\in\V\cap\widetilde{\L}^p$ and its dual $\V'+\widetilde{\L}^{p'}$ with the norm $$\inf\left\{\max\left(\|\v_1\|_{\V'},\|\v_2\|_{\widetilde{\L}^{p'}}\right):\v=\v_1+\v_2, \ \v_1\in\V', \ \v_2\in\widetilde{\L}^{p'}\right\}.$$ Moreover, we have the continuous embedding $\V\cap\widetilde{\L}^p\hookrightarrow\V\hookrightarrow\H\equiv\H'\hookrightarrow\V'\hookrightarrow\V'+\widetilde{\L}^{p'}$. One can define equivalent norms on $\V\cap\widetilde\L^{p}$ and $\V'+\widetilde\L^{\frac{p}{p-1}}$ as 
	\begin{align*}
		\|\u\|_{\V\cap\widetilde\L^{p}}=\left(\|\u\|_{\V}^2+\|\u\|_{\widetilde\L^{p}}^2\right)^{\frac{1}{2}}\ \text{ and } \ \|\u\|_{\V'+\widetilde\L^{\frac{p}{p-1}}}=\inf_{\u=\v+\w}\left(\|\v\|_{\V'}^2+\|\w\|_{\widetilde\L^{\frac{p}{p-1}}}^2\right)^{\frac{1}{2}}.
	\end{align*}
 Further, we use the notation $\H^2(\mathcal{O}):=\mathrm{H}^2(\mathcal{O};\R^d)$ for the second order Sobolev spaces.
 	\subsection{Inequalities}
 The following inequalities are frequently used  in the paper. 
 \begin{lemma}[H\"older's inequality]\label{Holder}
 	Assume that $\frac{1}{p}+\frac{1}{p'}=1$ with $1\leq p,p'\leq\infty$, $\u_1\in\L^{p}(\mathcal{O})$ and $\u_2\in\L^{p'}(\mathcal{O})$, then we get 
 	\begin{align*}
 		\|\u_1\u_2\|_{\L^1(\mathcal{O})}\leq\|\u_1\|_{\L^{p}(\mathcal{O})}\|\u_2\|_{\L^{p'}(\mathcal{O})}. 
 	\end{align*}
 \end{lemma}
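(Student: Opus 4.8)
The plan is to reduce the statement to the elementary \emph{Young's inequality} and then integrate a pointwise bound. First I would dispose of the endpoint cases. When $p=1$ and $p'=\infty$ (or symmetrically $p=\infty$, $p'=1$), the inequality $|\u_2(x)|\leq\|\u_2\|_{\L^{\infty}(\mathcal{O})}$ holds for a.e.\ $x\in\mathcal{O}$, so that integrating $|\u_1(x)\u_2(x)|\leq\|\u_2\|_{\L^{\infty}(\mathcal{O})}|\u_1(x)|$ over $\mathcal{O}$ yields the claim at once. Thus the substance lies in the range $1<p,p'<\infty$, on which I focus next.

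For the main range, the key auxiliary fact is that for all real numbers $a,b\geq0$,
\begin{align*}
	ab\leq\frac{a^p}{p}+\frac{b^{p'}}{p'}.
\end{align*}
I would establish this from the concavity of the logarithm (equivalently, the convexity of $t\mapsto e^t$): for $a,b>0$ one writes $ab=\exp\left(\frac{1}{p}\log a^p+\frac{1}{p'}\log b^{p'}\right)$ and applies Jensen's inequality with the weights $\frac{1}{p},\frac{1}{p'}$, which sum to one; the cases $a=0$ or $b=0$ are immediate. Having Young's inequality in hand, I would then normalize. If $\|\u_1\|_{\L^p(\mathcal{O})}=0$ then $\u_1=0$ a.e.\ and both sides vanish (and likewise for $\u_2$), while if either norm is infinite the inequality is trivial; so assume both norms are positive and finite. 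Setting $a=|\u_1(x)|/\|\u_1\|_{\L^p(\mathcal{O})}$ and $b=|\u_2(x)|/\|\u_2\|_{\L^{p'}(\mathcal{O})}$, applying Young's inequality pointwise and integrating over $\mathcal{O}$ gives
\begin{align*}
	\frac{1}{\|\u_1\|_{\L^p(\mathcal{O})}\|\u_2\|_{\L^{p'}(\mathcal{O})}}\int_{\mathcal{O}}|\u_1(x)\u_2(x)|\d x
	&\leq\frac{1}{p}\cdot\frac{\int_{\mathcal{O}}|\u_1(x)|^p\d x}{\|\u_1\|_{\L^p(\mathcal{O})}^p}
	+\frac{1}{p'}\cdot\frac{\int_{\mathcal{O}}|\u_2(x)|^{p'}\d x}{\|\u_2\|_{\L^{p'}(\mathcal{O})}^{p'}}\\
	&=\frac{1}{p}+\frac{1}{p'}=1.
\end{align*}
Multiplying through by $\|\u_1\|_{\L^p(\mathcal{O})}\|\u_2\|_{\L^{p'}(\mathcal{O})}$ then produces the asserted bound.

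There is no serious obstacle here, as this is a classical inequality; the only points requiring care are the correct treatment of the degenerate and endpoint cases and the justification of Young's inequality via convexity. The measurability of $|\u_1\u_2|$ and the finiteness of the integrals needed to carry out the normalization are guaranteed by the standing hypotheses $\u_1\in\L^p(\mathcal{O})$ and $\u_2\in\L^{p'}(\mathcal{O})$.
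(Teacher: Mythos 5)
Your proof is correct. Note that the paper offers no proof of Lemma \ref{Holder} at all: it is quoted as a classical inequality for later use, so there is no in-paper argument to compare against. Your route --- dispatching the endpoints $p=1$, $p'=\infty$ directly, deriving the equal-weight Young inequality $ab\leq\frac{a^p}{p}+\frac{b^{p'}}{p'}$ from convexity of the exponential (this is the $\varepsilon=1$ case of the paper's Lemma \ref{Young}), and then normalizing and integrating the pointwise bound --- is exactly the standard textbook proof, and you treat the degenerate cases $\|\u_1\|_{\L^p(\mathcal{O})}=0$ or $\|\u_2\|_{\L^{p'}(\mathcal{O})}=0$ properly. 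One cosmetic remark: in this paper $\L^p(\mathcal{O})=\mathrm{L}^p(\mathcal{O};\R^d)$ consists of vector-valued functions, so the product $\u_1\u_2$ should be read through the pointwise bound $|\u_1(x)\cdot\u_2(x)|\leq|\u_1(x)|\,|\u_2(x)|$, after which your scalar argument applies verbatim to $a=|\u_1(x)|/\|\u_1\|_{\L^p(\mathcal{O})}$ and $b=|\u_2(x)|/\|\u_2\|_{\L^{p'}(\mathcal{O})}$.
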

 \begin{lemma}[Interpolation inequality]\label{Interpolation}
 	Assume $1\leq s_1\leq s\leq s_2\leq \infty$, $\theta\in(0,1)$ such that $\frac{1}{s}=\frac{a}{s_1}+\frac{1-a}{s_2}$ and $\u\in\L^{s_1}(\mathcal{O})\cap\L^{s_2}(\mathcal{O})$, then we have 
 	\begin{align*}
 		\|\u\|_{\L^s(\mathcal{O})}\leq\|\u\|_{\L^{s_1}(\mathcal{O})}^{a}\|\u\|_{\L^{s_2}(\mathcal{O})}^{1-a}. 
 	\end{align*}
 \end{lemma}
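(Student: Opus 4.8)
The plan is to reduce everything to a single application of H\"older's inequality (Lemma \ref{Holder}). The key observation is an algebraic decomposition of the integrand together with a carefully chosen pair of conjugate exponents. Writing $s = sa + s(1-a)$, one factors $|\u|^s = |\u|^{sa}\,|\u|^{s(1-a)}$, and then selects the H\"older exponents so that the two resulting factors become precisely the $\L^{s_1}$- and $\L^{s_2}$-norms. The whole point is that the interpolation identity $\frac{1}{s} = \frac{a}{s_1} + \frac{1-a}{s_2}$ is exactly the statement that these exponents are conjugate.

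Concretely, first I would assume $1 \le s_1 \le s \le s_2 < \infty$ with $a \in (0,1)$ (the degenerate cases $s = s_1$, $s = s_2$, or $a \in \{0,1\}$ being immediate), and set $p = \frac{s_1}{sa}$ and $p' = \frac{s_2}{s(1-a)}$. The defining relation guarantees
\[
\frac{1}{p} + \frac{1}{p'} = \frac{sa}{s_1} + \frac{s(1-a)}{s_2} = s\left(\frac{a}{s_1}+\frac{1-a}{s_2}\right) = 1,
\]
so that $p,p'$ form a genuine conjugate pair and Lemma \ref{Holder} is applicable.

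Next I would apply H\"older's inequality to the product $|\u|^{sa}\cdot|\u|^{s(1-a)}$ with these exponents, which gives
\[
\int_{\mathcal{O}} |\u(x)|^s \d x \le \left(\int_{\mathcal{O}} |\u(x)|^{sap}\d x\right)^{\frac{1}{p}}\left(\int_{\mathcal{O}}|\u(x)|^{s(1-a)p'}\d x\right)^{\frac{1}{p'}}.
\]
By the choice of $p$ and $p'$ one has $sap = s_1$ and $s(1-a)p' = s_2$, while $\frac{s_1}{p} = sa$ and $\frac{s_2}{p'} = s(1-a)$, so the right-hand side equals $\|\u\|_{\L^{s_1}(\mathcal{O})}^{sa}\|\u\|_{\L^{s_2}(\mathcal{O})}^{s(1-a)}$. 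This yields $\|\u\|_{\L^s(\mathcal{O})}^s \le \|\u\|_{\L^{s_1}(\mathcal{O})}^{sa}\|\u\|_{\L^{s_2}(\mathcal{O})}^{s(1-a)}$, and taking the $s$-th root produces the claimed estimate.

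For the endpoint $s_2 = \infty$, where the identity forces $a = s_1/s$, the Hölder step is replaced by the elementary bound $\int_{\mathcal{O}} |\u|^s \d x = \int_{\mathcal{O}} |\u|^{s_1}|\u|^{s-s_1}\d x \le \|\u\|_{\L^\infty(\mathcal{O})}^{s-s_1}\int_{\mathcal{O}}|\u|^{s_1}\d x$, which gives the same conclusion after taking roots. I do not anticipate any real obstacle: the entire substance is the bookkeeping that the conjugacy condition $\frac{1}{p}+\frac{1}{p'}=1$ coincides with the interpolation identity, so the only care required is the exponent arithmetic and the isolation of the trivial and endpoint cases.
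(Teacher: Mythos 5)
Your proof is correct: the choice $p=\frac{s_1}{sa}$, $p'=\frac{s_2}{s(1-a)}$ does form a conjugate pair by the interpolation identity (and both exceed $1$ automatically, since $\frac{a}{s_1}\leq\frac{1}{s}$ and $\frac{1-a}{s_2}\leq\frac{1}{s}$ follow from that same identity), the exponent arithmetic $sap=s_1$, $s(1-a)p'=s_2$ is right, and the endpoint $s_2=\infty$ is handled properly. The paper states Lemma \ref{Interpolation} without proof, as a standard fact; your argument via Lemma \ref{Holder} is precisely the canonical textbook derivation, so there is no divergence to report.
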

 \begin{lemma}[Young's inequality]\label{Young}
 	For all $a,b,\varepsilon>0$ and for all $1<p,p'<\infty$ with $\frac{1}{p}+\frac{1}{p'}=1$, we obtain 
 	\begin{align*}
 		ab\leq\frac{\varepsilon}{p}a^p+\frac{1}{p'\varepsilon^{p'/p}}b^{p'}.
 	\end{align*}
 \end{lemma}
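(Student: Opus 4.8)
The plan is to first establish the unweighted Young's inequality $ab\le \frac{a^p}{p}+\frac{b^{p'}}{p'}$ and then recover the stated $\varepsilon$-weighted form by a homogeneity rescaling. For the unweighted inequality, I would exploit the convexity of the exponential function. Setting $\lambda=\frac1p$ and $1-\lambda=\frac1{p'}$ (legitimate, since $\frac1p+\frac1{p'}=1$ forces $\lambda\in(0,1)$), and taking $u=\log(a^p)$, $v=\log(b^{p'})$, the convexity bound $e^{\lambda u+(1-\lambda)v}\le\lambda e^{u}+(1-\lambda)e^{v}$ becomes
\[
ab=e^{\frac1p\log(a^p)+\frac1{p'}\log(b^{p'})}\le\frac1p a^p+\frac1{p'}b^{p'},
\]
where the equality uses $\frac1p\log(a^p)+\frac1{p'}\log(b^{p'})=\log a+\log b=\log(ab)$. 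Equivalently, the same conclusion follows from Jensen's inequality applied to the concave function $\log$, which is the route I would take if I wished to avoid invoking $\exp$.

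Next I would pass to the weighted statement by applying the unweighted bound to the rescaled quantities $\widetilde a=\varepsilon^{1/p}a$ and $\widetilde b=\varepsilon^{-1/p}b$. Since $\widetilde a\,\widetilde b=ab$, this yields
\[
ab\le\frac1p\bigl(\varepsilon^{1/p}a\bigr)^{p}+\frac1{p'}\bigl(\varepsilon^{-1/p}b\bigr)^{p'}=\frac{\varepsilon}{p}a^p+\frac{1}{p'\varepsilon^{p'/p}}b^{p'},
\]
which is exactly the claimed inequality. The only point requiring minor care is the bookkeeping of exponents: the factor $\varepsilon^{-1/p}$ raised to the power $p'$ produces $\varepsilon^{-p'/p}$, which becomes the denominator $\varepsilon^{p'/p}$ in the second term.

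Since this is an elementary pointwise inequality, there is no genuine obstacle to anticipate; the entire content sits in the convexity (or concavity) step, and the $\varepsilon$-dependence is a free consequence of the scale invariance of the product $ab$ under the substitution above. I would note only that the hypotheses $a,b,\varepsilon>0$ and $1<p,p'<\infty$ guarantee that all logarithms and fractional powers appearing in the argument are well defined, so no degenerate or boundary cases need separate treatment.
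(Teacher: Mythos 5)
Your proof is correct: the convexity step yields the classical unweighted Young inequality, and the substitution $\widetilde a=\varepsilon^{1/p}a$, $\widetilde b=\varepsilon^{-1/p}b$ (with $\widetilde a\,\widetilde b=ab$ and $(\varepsilon^{-1/p})^{p'}=\varepsilon^{-p'/p}$) delivers exactly the stated $\varepsilon$-weighted form. The paper states this lemma as a standard fact without giving any proof, so there is nothing to compare against; your argument is the canonical textbook derivation and fully justifies the statement as used throughout the paper.
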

	\subsection{Projection and linear operators }
	Let $\mathcal{P}_p: \L^p(\mathcal{O}) \to\widetilde{\L}^p$ be the Helmholtz-Hodge (or Leray) projection  (cf.  \cite{DFHM}). For $p=2$, $\mathcal{P}:=\mathcal{P}_2$ becomes an orthogonal projection  and for $2<p<\infty$, it is a bounded linear operator. Since $\mathcal{O}$ has $\mathrm{C}^2$-boundary, $\mathcal{P}$ maps $\H^1(\mathcal{O})$ into itself (see Remark 1.6, \cite{Temam}).

	We define the Stokes operator
	\begin{equation*}
		\A\u:=-\mathcal{P}\Delta\u,\;\u\in\D(\A):=\V\cap\H^{2}(\mathcal{O}).
	\end{equation*}
	\subsection{Bilinear operator}
	Let us define the \emph{trilinear form} $b(\cdot,\cdot,\cdot):\V\times\V\times\V\to\R$ by $$b(\u,\v,\w)=\int_{\mathcal{O}}(\u(x)\cdot\nabla)\v(x)\cdot\w(x)\d x=\sum_{i,j=1}^d\int_{\mathcal{O}}\u_i(x)\frac{\partial \v_j(x)}{\partial x_i}\w_j(x)\d x.$$ If $\u, \v$ are such that the linear map $b(\u, \v, \cdot) $ is continuous on $\V$, the corresponding element of $\V'$ is denoted by $\B(\u, \v)$. We also denote $\B(\u) = \B(\u, \u)=\mathcal{P}[(\u\cdot\nabla)\u]$.
	An integration by parts gives 
	\begin{align}\label{b0}
		b(\u,\v,\w) &=  -b(\u,\w,\v),\ \text{ for all }\ \u,\v,\w\in \V.
	\end{align}
Consequently, $b(\u,\v,\v)= 0,$ for all $\u,\v \in\V$.
	\begin{remark}
	Note that $\langle\B(\v,\u-\v),\u-\v\rangle=0$ and it implies that
	\begin{align}\label{441}
		\langle \B(\u)-\B(\v),\u-\v\rangle =\langle\B(\u-\v,\u),\u-\v\rangle=-\langle\B(\u-\v,\u-\v),\v\rangle.
	\end{align}
\end{remark}
	\begin{remark}
		Let us recall the following estimates on trilinear form $b(\cdot,\cdot,\cdot)$ from \cite{Temam1} (see Chapter 2, \cite{Temam1}). For all   $\u, \v, \w\in \V$
		\begin{align}\label{b1}
			|b(\u,\v,\w)|&\leq C
			\begin{cases}
				\|\u\|^{\frac{1}{2}}_{\H}\|\u\|^{\frac{1}{2}}_{\V}\|\v\|_{\V}\|\w\|^{\frac{1}{2}}_{\H}\|\w\|^{\frac{1}{2}}_{\V},\ \ \ 	\text{ for } d=2,\\
				\|\u\|^{\frac{1}{4}}_{\H}\|\u\|^{\frac{3}{4}}_{\V}\|\v\|_{\V}\|\w\|^{\frac{1}{4}}_{\H}\|\w\|^{\frac{3}{4}}_{\V}, \ \ \ \text{ for } d=3.
			\end{cases}
		\end{align}
	\end{remark}
\begin{remark}\label{RemarkB}
	It can also be seen that for $r> 3$, $\B$ maps $\widetilde{\L}^{r+1}$ into $\V'$ and using Lemma \ref{Interpolation}, we obtain  
	\begin{align}\label{212}
		\left|\langle \B(\u),\v\rangle \right|&=\left|b(\u,\v,\u)\right|\leq \|\u\|_{\widetilde{\L}^{\frac{2(r+1)}{r-1}}}\|\u\|_{\widetilde{\L}^{r+1}}\|\v\|_{\V}\leq\|\u\|_{\widetilde{\L}^{r+1}}^{\frac{r+1}{r-1}}\|\u\|_{\H}^{\frac{r-3}{r-1}}\|\v\|_{\V},\\
		\|\B(\u)\|^2_{\V^{'}}&\leq\|\u\|_{\widetilde{\L}^{r+1}}^{\frac{2(r+1)}{r-1}}\|\u\|_{\H}^{\frac{2(r-3)}{r-1}}\leq C\bigg(\|\u\|_{\widetilde{\L}^{r+1}}^{r+1}+\|\u\|_{\H}^{2}\bigg)\leq C\bigg(\|\u\|_{\widetilde{\L}^{r+1}}^{r+1}+\|\u\|_{\V}^{2}\bigg).\nonumber
	\end{align}
For $r=3$, we get
\begin{align*}
	\|\B(\u)\|^2_{\V^{'}}\leq\|\u\|_{\widetilde{\L}^{4}}^{4}.
\end{align*}
\end{remark}
	\subsection{Nonlinear operator}
	Consider the nonlinear operator $\mathcal{C}(\u):=\mathcal{P}(|\u|^{r-1}\u),$ which implies that $\langle\mathcal{C}(\u),\u\rangle =\|\u\|_{\widetilde{\L}^{r+1}}^{r+1}$. Also, the map $\mathcal{C}(\cdot):\V\cap\widetilde{\L}^{r+1}\to\V'+\widetilde{\L}^{\frac{r+1}{r}}$. For all $\u\in\V\cap\wi\L^{r+1}$, the map is Gateaux differentiable with Gateaux derivative 
	\begin{align}\label{29}
	\mathcal{C}'(\u)\v&=\left\{\begin{array}{cl}\mathcal{P}(\v),&\text{ for }r=1,\\ \left\{\begin{array}{cc}\mathcal{P}(|\u|^{r-1}\v)+(r-1)\mathcal{P}\left(\frac{\u}{|\u|^{3-r}}(\u\cdot\v)\right),&\text{ if }\u\neq \mathbf{0},\\\mathbf{0},&\text{ if }\u=\mathbf{0},\end{array}\right.&\text{ for } 1<r<3,\\ \mathcal{P}(|\u|^{r-1}\v)+(r-1)\mathcal{P}(\u|\u|^{r-3}(\u\cdot\v)), &\text{ for }r\geq 3,\end{array}\right.
	\end{align}
	for all $\v\in\V\cap\widetilde{\L}^{r+1}$. Moreover, for any $r\in [1, \infty)$ and $\u, \v \in \V\cap\widetilde{\L}^{r+1}$, we obtain (see subsection 2.4, \cite{MTM2})
	\begin{align}\label{MO_c}
		\langle\mathcal{C}(\u)-\mathcal{C}(\v),\u-\v\rangle\geq\frac{1}{2}\||\u|^{\frac{r-1}{2}}(\u-\v)\|_{\H}^2+\frac{1}{2}\||\v|^{\frac{r-1}{2}}(\u-\v)\|_{\H}^2 \geq 0,
	\end{align}
	and 
	\begin{align}\label{a215}
		\|\u-\v\|_{\wi\L^{r+1}}^{r+1}\leq 2^{r-2}\||\u|^{\frac{r-1}{2}}(\u-\v)\|_{\H}^2+2^{r-2}\||\v|^{\frac{r-1}{2}}(\u-\v)\|_{\H}^2,
	\end{align}
	for $r\geq 1$ (replace $2^{r-2}$ with $1,$ for $1\leq r\leq 2$).
	
		\begin{theorem}[\cite{MTM2}]\label{LocMon}
		Let $d=2$ with $ r\in[1,3]$, $d=2,3$ with $ r> 3$, $d=r=3$ with $2\beta\mu\geq1$ and $\u_1, \u_2 \in \V.$ Then, for the operator $\M(\u) = \mu\A\u +\B(\u)+\alpha\u+\beta\mathcal{C}(\u),$ we have 
		\begin{align}\label{fe2_1}
			\langle\M(\u_1)-\M(\u_2),\u_1-\u_2\rangle+ \frac{27}{32\mu ^3}\|\u_2\|^4_{\widetilde{\L}^4}\|\u_1-\u_2\|_{\H}^2&\geq 0, \text{ for } d=2 \text{ and } r\in[1,3],
		\end{align}
		\begin{align}\label{fe2_2}
			\langle\M(\u_1)-\M(\u_2),\u_1-\u_2\rangle+ \eta\|\u_1-\u_2\|_{\H}^2&\geq 0, \text{ for } d=2,3 \text{ and } r>3,
		\end{align}
		where $\eta=\frac{r-3}{2\mu(r-1)}\left[\frac{2}{\beta\mu (r-1)}\right]^{\frac{2}{r-3}}$ and 
		\begin{align}\label{fe2_3}
			\langle\M(\u_1)-\M(\u_2),\u_1-\u_2\rangle \geq 0, \text{ for } d=r=3 \text{ with } 2\beta\mu\geq1.
		\end{align} 
	\end{theorem}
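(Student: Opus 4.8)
The plan is to set $\w:=\u_1-\u_2$ and to expand the pairing $\langle\M(\u_1)-\M(\u_2),\w\rangle$ into the contributions of the four pieces of $\M$. The Stokes part is self-adjoint and gives $\langle\mu\A\u_1-\mu\A\u_2,\w\rangle=\mu\|\w\|_{\V}^2$; the linear damping gives $\alpha\|\w\|_{\H}^2\ge0$; and the nonlinear damping is governed by the monotonicity estimate \eqref{MO_c}, which yields $\beta\langle\mathcal{C}(\u_1)-\mathcal{C}(\u_2),\w\rangle\ge\frac{\beta}{2}\bigl\||\u_1|^{\frac{r-1}{2}}\w\bigr\|_{\H}^2+\frac{\beta}{2}\bigl\||\u_2|^{\frac{r-1}{2}}\w\bigr\|_{\H}^2$. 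The only indefinite term is the convective one, which by the identity \eqref{441} equals $\langle\B(\u_1)-\B(\u_2),\w\rangle=-\langle\B(\w,\w),\u_2\rangle=-b(\w,\w,\u_2)$. Thus everything reduces to estimating the single quantity $|b(\w,\w,\u_2)|$ against the nonnegative terms $\mu\|\w\|_{\V}^2$ and $\frac{\beta}{2}\bigl\||\u_2|^{\frac{r-1}{2}}\w\bigr\|_{\H}^2$, and the three regimes differ only in how this is carried out.

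For $d=2$ and $r\in[1,3]$ I would discard the (nonnegative) damping contributions and let the viscosity absorb the convection on its own. Applying H\"older's inequality (Lemma \ref{Holder}) in the form $L^4\times L^2\times L^4$ gives $|b(\w,\w,\u_2)|\le\|\w\|_{\widetilde{\L}^4}\|\w\|_{\V}\|\u_2\|_{\widetilde{\L}^4}$, and the two-dimensional Ladyzhenskaya inequality $\|\w\|_{\widetilde{\L}^4}^2\le C\|\w\|_{\H}\|\w\|_{\V}$ then produces $|b(\w,\w,\u_2)|\le C\|\w\|_{\H}^{1/2}\|\w\|_{\V}^{3/2}\|\u_2\|_{\widetilde{\L}^4}$. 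An application of Young's inequality (Lemma \ref{Young}) with exponents $\frac{4}{3}$ and $4$, tuned so that the coefficient of $\|\w\|_{\V}^2$ is exactly $\mu$, leaves a remainder of the form $c\,\|\u_2\|_{\widetilde{\L}^4}^4\|\w\|_{\H}^2$; carrying the sharp Ladyzhenskaya constant through this optimization is what produces the stated constant $\frac{27}{32\mu^3}$ and hence \eqref{fe2_1}.

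For $d=2,3$ and $r>3$ I would instead pair $|\nabla\w|$ with $|\u_2|\,|\w|$ by the Cauchy--Schwarz inequality to get $|b(\w,\w,\u_2)|\le\|\w\|_{\V}\bigl\||\u_2|\w\bigr\|_{\H}$, then use Young's inequality to split off $\frac{\mu}{2}\|\w\|_{\V}^2+\frac{1}{2\mu}\int_{\mathcal{O}}|\u_2|^2|\w|^2\d x$. A \emph{pointwise} Young inequality with exponents $\frac{r-1}{2}$ and $\frac{r-1}{r-3}$ dominates $|\u_2|^2$ by $\beta\mu\,|\u_2|^{r-1}$ plus a constant; the high-power part is absorbed into the available $\frac{\beta}{2}\bigl\||\u_2|^{\frac{r-1}{2}}\w\bigr\|_{\H}^2$, and bookkeeping the constant produces exactly $\eta\|\w\|_{\H}^2$ with the displayed $\eta$, giving \eqref{fe2_2}. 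In the critical case $d=r=3$ the exponent split degenerates (since $r-1=2$), so I would avoid Young entirely and retain the full quadratic form $\mu\|\w\|_{\V}^2-\bigl\||\u_2|\w\bigr\|_{\H}\|\w\|_{\V}+\frac{\beta}{2}\bigl\||\u_2|\w\bigr\|_{\H}^2$; completing the square shows this is nonnegative precisely when $2\beta\mu\ge1$, which is the hypothesis and gives \eqref{fe2_3}.

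The main obstacle is the sharpness of the constants rather than the structure of the argument. Recovering $\frac{27}{32\mu^3}$ requires carrying the correct Ladyzhenskaya constant through the Young optimization, and recovering $\eta$ requires careful bookkeeping in the pointwise Young step. Most delicately, in the critical regime a naive Young split of $\|\w\|_{\V}\bigl\||\u_2|\w\bigr\|_{\H}$ would only yield the suboptimal threshold $2\beta\mu\ge2$; reaching the sharp threshold $2\beta\mu\ge1$ forces one to keep the quadratic form and complete the square rather than separate the two factors.
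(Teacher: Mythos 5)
Your proposal is correct and follows essentially the same route as the source: the paper states Theorem \ref{LocMon} with a citation to \cite{MTM2} rather than a proof, and the argument there (and in the paper's own reuse of these estimates in Step 3 of Theorem \ref{Main}, cf.\ \eqref{MP16}--\eqref{MP19}) is exactly your decomposition via \eqref{441} and \eqref{MO_c}, Ladyzhenskaya plus Young for $d=2$, $r\in[1,3]$, the pointwise Young bound $\frac{1}{2\mu}|\u_2|^2\leq\frac{\beta}{2}|\u_2|^{r-1}+\eta$ whose optimization gives precisely the stated $\eta$ for $r>3$, and keeping the quadratic form $\mu x^2-xy+\frac{\beta}{2}y^2$ (nonnegative iff $2\beta\mu\geq1$) in the critical case. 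The one cosmetic remark: tuning Young so the $\V$-coefficient is exactly $\mu$, with the standard Ladyzhenskaya constant, yields the remainder $\frac{27}{128\mu^3}\|\u_2\|^4_{\widetilde{\L}^4}\|\w\|^2_{\H}$, which is stronger than the stated $\frac{27}{32\mu^3}$ (the latter corresponds to absorbing into $\frac{\mu}{2}\|\w\|^2_{\V}$), so \eqref{fe2_1} follows either way.
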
	
	
	\subsection{Abstract formulation and solvability results}\label{AF}
	In this subsection, we describe an abstract formulation and solution of the system \eqref{1}. Taking orthogonal projection $\mathcal{P}$ to the first equation in \eqref{1}, we obtain
	\begin{equation}\label{SCBF}
		\left\{
		\begin{aligned}
			\d \u(t)+[\mu \A\u(t)+\B(\u(t))+\alpha\u(t)+\beta\mathcal{C}(\u(t))]\d t&= \G(\u(t))\d\W(t), \ \ t>0, \\
			\u(0)&=\x,
		\end{aligned}
		\right.
	\end{equation}
where $\W(\cdot)$ is a cylindrical Wiener process in a separable Hilbert space $\left(\mathrm{K},\left\langle\cdot,\cdot\right\rangle_{\mathrm{K}}\right)$ defined on a complete filtered probability space $(\Omega,\mathscr{F},(\mathscr{F}_t)_{t\geq0},\mathbb{P})$. Furthermore,  $$\G:\H\times\Omega\to\mathcal{L}_2(\mathrm{K};\H),$$ where $\left(\mathcal{L}_2(\mathrm{K};\H),\|\cdot\|_{\mathcal{L}_2}\right)$ is the space of all Hilbert-Schmidt operators from $\mathrm{K}$ to $\H$. For the solvability of stochastic system \eqref{SCBF}, we need following assumptions on the noise coefficient $\G(\cdot)$.
	\begin{hypothesis}\label{HonG}
		The noise coefficient $\G(\cdot)$ satisfies the following conditions:
		\begin{itemize}
			\item[(H.1)] \emph{(Growth condition)} There exists a positive constant $L_1$ such that for all $\v\in\H$,$$\|\G(\v)\|^2_{\mathcal{L}_2}\leq L_1(1+\|\v\|_{\H}^2).$$
			\item[(H.2)] \emph{(Local monotonicity condition)} There exists a positive measurable function $\rho:\V\to[0,\infty)$ such that for all $\v_1,\v_2\in\H$ and $\v\in\V$,$$\|\G(\v_1)-\G(\v_2)\|^2_{\mathcal{L}_2}\leq \rho(\v_2)\|\v_1-\v_2\|_{\H}^2,$$
			where \begin{align*}
				\rho(\v)\leq L_1(1+\|\v\|^{2}_{\V})(1+\|\v\|^{\gamma}_{\H}).
			\end{align*}
		\end{itemize}
	\end{hypothesis}

Let us first provide the global solvability results of the system \eqref{SCBF}.
\begin{definition}
	Let $p>\gamma+2$, $\x\in \mathrm{L}^{p}(\Omega, \mathscr{F}_0,\mathbb{P};\H)$. Then, an $\H$-valued $(\mathscr{F}_t)$-adapted stochastic process $\{\u(t)\}_{t\in[0,T]}$ is called a \emph{strong solution} (in probabilistic sense) of \eqref{SCBF} on $[0,T]$ with initial data $\x$ if $\u\in\mathrm{L}^p(\Omega;\mathrm{L}^{\infty}(0,T;\H))\cap\mathrm{L}^2(\Omega;\mathrm{L}^2(0,T;\V))\cap\mathrm{L}^{r+1}(\Omega;\mathrm{L}^{r+1}(0,T;\widetilde{\L}^{r+1})),$ and satisfies, for every $t>0$ and $\phi\in\V\cap\widetilde{\L}^{r+1}$,
	\begin{align*}
		(\u(t),\phi)
		&+ \int_{0}^{t} \langle\mu\A\u(\zeta)+ \B(\u(\zeta))+\alpha\u(\zeta)+\beta\mathcal{C}(\u(\zeta)),\phi\rangle\d \zeta \\&= (\x,\phi)+\int_{0}^{t} (\phi, \G(\u(\zeta))\d\W(\zeta)),
	\end{align*}$\mathbb{P}$-a.s., and $\u$ has  a modification with paths in  $\u\in\mathrm{C}([0,T];\H)\cap\mathrm{L}^2(0,T;\V)\cap\mathrm{L}^{r+1}(0,T;\widetilde{\L}^{r+1}), $ $\mathbb{P}$-a.s. 

A strong solution $\u(\cdot)$ to the system \eqref{SCBF} is called a
	\emph{pathwise  unique strong solution} if
	$\widetilde{\u}(\cdot)$ is an another strong
	solution, then $$\mathbb{P}\big\{\omega\in\Omega:\u(t)=\widetilde{\u}(t),\ \text{ for all }\ t\in[0,T]\big\}=1.$$ 
\end{definition}
Under the Hypothesis \ref{HonG} on $\G$, the existence of a unique pathwise strong solution to the system \eqref{SCBF} is given by the following result, which can be obtained from the works \cite{MTM2,MTM1,MTM3}, etc.
\begin{proposition}\label{E_U1}
	For $d=2$ with $r\in[1,3]$, $d=2,3$ with $r\in(3,\infty)$ and $d=r=3$ with $2\beta\mu\geq1$, assume that all the conditions of Hypothesis \ref{HonG} are satisfied. Then, for every $T>0$, $p>\gamma+2$, for every $\x\in \mathrm{L}^{p}(\Omega, \mathscr{F}_0,\mathbb{P};\H)$, system \eqref{SCBF} has a pathwise unique strong solution $\u\in\mathrm{L}^{p}(\Omega; \mathrm{L}^{\infty}(0,T;\H))\cap\mathrm{L}^2(\Omega; \mathrm{L}^2(0,T;\V))\cap\mathrm{L}^{r+1}(\Omega;\mathrm{L}^{r+1}(0,T;\widetilde{\L}^{r+1}))$, that is,
	\begin{align}\label{E_U2}
		\mathbb{E}\left[\sup_{t\in[0,T]}\|\u(t)\|^{p}_{\H}+\int_{0}^{T}\|\u(t)\|^2_{\V}\d t+\int_{0}^{T}\|\u(t)\|^{r+1}_{\wi\L^{r+1}}\d t\right]<\infty,
	\end{align}
with paths in $\C([0,T];\H)\cap\mathrm{L}^2(0,T;\V)\cap\mathrm{L}^{r+1}(0,T;\wi\L^{r+1})$, $\mathbb{P}$-a.s.
\end{proposition}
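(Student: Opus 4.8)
The plan is to construct the solution by a Faedo--Galerkin scheme, closing the a priori estimates through the coercivity built into the operator $\M$, and then to deduce existence of a limit and pathwise uniqueness from the local monotonicity estimates of Theorem \ref{LocMon} together with Hypothesis \ref{HonG}. First I would fix the orthonormal basis $\{e_k\}$ of $\H$ of eigenfunctions of the Stokes operator $\A$, set $\H_n=\mathrm{span}\{e_1,\dots,e_n\}$ with orthogonal projection $P_n$, and study the finite-dimensional Itô system
$$\d\u^n(t)+\big[\mu\A\u^n(t)+P_n\B(\u^n(t))+\alpha\u^n(t)+\beta P_n\mathcal{C}(\u^n(t))\big]\d t=P_n\G(\u^n(t))\d\W(t),$$
with $\u^n(0)=P_n\x$. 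Since the projected coefficients are continuous and locally Lipschitz on $\H_n$, this SDE has a unique local solution, and the bounds below exclude explosion.

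For the a priori estimates I would apply the finite-dimensional Itô formula to $\|\u^n(t)\|_{\H}^2$. By \eqref{b0} we have $\langle\B(\u^n),\u^n\rangle=0$, while $\langle\mathcal{C}(\u^n),\u^n\rangle=\|\u^n\|_{\widetilde\L^{r+1}}^{r+1}$, so the nonlinear terms either vanish or supply coercivity and the dissipation from $\mu\A$, $\alpha\,\mathrm{Id}$ and $\beta\mathcal{C}$ controls the solution against the noise. The growth condition (H.1) bounds $\|\G(\u^n)\|_{\mathcal{L}_2}^2$ linearly in $\|\u^n\|_{\H}^2$; applying Itô to $\|\u^n\|_{\H}^p$, a Burkholder--Davis--Gundy estimate, and Grönwall's inequality then yield $p$-th moment bounds uniform in $n$ (the restriction $p>\gamma+2$ being what closes the Grönwall step against the factor $1+\|\cdot\|_{\H}^{\gamma}$ in $\rho$), giving precisely the three bounds of \eqref{E_U2}. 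These same bounds, through the growth estimate \eqref{212} and its analogue $\|\mathcal{C}(\u)\|_{\widetilde\L^{\frac{r+1}{r}}}^{\frac{r+1}{r}}=\|\u\|_{\widetilde\L^{r+1}}^{r+1}$, make $\{\B(\u^n)\}$ and $\{\mathcal{C}(\u^n)\}$ bounded in $\mathrm{L}^2(\Omega;\mathrm{L}^2(0,T;\V'))$ and $\mathrm{L}^{\frac{r+1}{r}}(\Omega;\mathrm{L}^{\frac{r+1}{r}}(0,T;\widetilde\L^{\frac{r+1}{r}}))$, respectively.

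I would then extract along a subsequence weak and weak-star limits $\u$, $B_0$, $C_0$, $\G_0$ in the corresponding reflexive spaces and verify that the limit satisfies the linear identity $\d\u+[\mu\A\u+B_0+\alpha\u+\beta C_0]\d t=\G_0\,\d\W$. The crux, and the step I expect to be the main obstacle, is the identification $B_0=\B(\u)$, $C_0=\mathcal{C}(\u)$ and $\G_0=\G(\u)$: because $\M$ is only \emph{locally} monotone, the classical Minty--Browder trick does not apply verbatim. I would handle this with the local-monotonicity inequality \eqref{fe2_2} (respectively \eqref{fe2_1}, \eqref{fe2_3}), introducing the exponential weight $\exp\{-\int_0^t(\eta+\rho(\cdot))\,\d s\}$ to absorb both the $\eta\|\cdot\|_{\H}^2$ correction of the drift and the monotonicity defect of $\G$ coming from (H.2). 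Applying the Itô product rule to the weighted energy, taking $\liminf$ with weak lower semicontinuity of the norms, testing the resulting inequality against $\M(\phi)$ for arbitrary $\phi$, and finally letting $\phi\to\u$ forces the desired identifications simultaneously for the drift and the noise term.

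Finally, for pathwise uniqueness I would take two solutions $\u_1,\u_2$ with the same initial data, set $\w=\u_1-\u_2$, and apply Itô's formula to $\|\w(t)\|_{\H}^2$ up to a stopping time $\tau_N$ chosen so that the relevant norm of $\u_2$ (the quantity $\|\u_2\|_{\widetilde\L^4}^4$ in the 2D case $r\in[1,3]$, or the $\H$-norm in the regime $r>3$) stays bounded. The local monotonicity of $\M$ from Theorem \ref{LocMon} and the local monotonicity (H.2) of $\G$ then reduce the differential inequality to one to which Grönwall's lemma applies, giving $\w\equiv0$ on $[0,\tau_N]$; letting $N\to\infty$ yields uniqueness on $[0,T]$. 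This combined with the existence argument above delivers the pathwise unique strong solution directly on the original probability space, so no appeal to Yamada--Watanabe is required.
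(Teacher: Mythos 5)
Your proposal is correct and follows essentially the same route as the paper, whose own proof of Proposition \ref{E_U1} is simply a citation to \cite{MTM2,MTM1,MTM3}: the Faedo--Galerkin scheme, $p$-th moment estimates via It\^o/BDG/Gr\"onwall, identification of the weak limits through the local monotonicity of Theorem \ref{LocMon} with an exponential weight built from $\rho$, and uniqueness via stopping times are precisely the argument of those references and mirror the paper's own Section \ref{sec5} proof of the more general Theorem \ref{4.1}. One small inaccuracy: the restriction $p>\gamma+2$ is not what closes the Gr\"onwall step (under (H.1) the moment bounds hold for any $p\geq 2$); it is needed in the identification and uniqueness steps, to guarantee $\mathbb{E}\big[\int_0^T\rho(\u(t))\,\d t\big]<\infty$ via $\rho(\u)\leq L_1(1+\|\u\|^2_{\V})(1+\|\u\|^{\gamma}_{\H})$ and H\"older's inequality, so that the exponential weight is a.s.\ finite and the weighted It\^o argument is justified.
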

\begin{proof}
	See the works \cite{MTM2,MTM1,MTM3}.
\end{proof}
\begin{remark}\label{rin1,3}
	In \cite{TMRZ}, authors demonstrate the Wong-Zakai approximation and support theorem for SPDEs with locally monotone coefficients. We show that for $d=2$ with $r\in[1,3)$ our stochastic system \eqref{SCBF} comes under the framework of the work \cite{TMRZ}.  Let us show that for $d=2$ with $r\in[1,3)$, the system \eqref{SCBF} satisfies  \emph{Assumption 1} of the work \cite{TMRZ}. Since, assumptions on the noise coefficient $\mathrm{G}(\cdot)$ (Hypothesis \ref{HonG}) is same as that  in \cite{TMRZ}, we only need to check assumptions on $\mathrm{M}(\cdot)$. The conditions (H1) Hemicontinuity, (H2) Local monotonicity and (H3) Coercivity of Assumption 1 in \cite{TMRZ} are easy to check (cf. \cite{MTM2,MTM1}). Finally, we show that $\mathrm{M}(\cdot)$ satisfies   condition (H4) of Assumption 1 in \cite{TMRZ}. For $\varepsilon\in(0,1)$, using \eqref{b1} (for $d=2$), Sobolev's embedding (for $d=2$), Lemmas \ref{Holder} and \ref{Young}, we have
	\begin{align*}
		|\big\langle\M(\u),\v\big\rangle|&=|\big\langle\mu\A\u+\B(\u)+\alpha\u+\beta\mathcal{C}(\u),\v\big\rangle|\\&\leq \mu\|\u\|_{\V}\|\v\|_{\V}+C\|\u\|_{\H}\|\u\|_{\V}\|\v\|_{\V}+\alpha\|\u\|_{\H}\|\v\|_{\H}+\beta\|\u\|^r_{\wi\L^{r(1+\varepsilon)}}\|\v\|_{\wi\L^{\frac{1+\varepsilon}{\varepsilon}}}\\&\leq C\big[\|\u\|_{\V}+\|\u\|_{\H}\|\u\|_{\V}+\|\u\|^{r}_{\wi\L^{r(1+\varepsilon)}}\big]\|\v\|_{\V},
	\end{align*}
	where $\varepsilon\in(0,1)$. Now since $r<3$, we can choose $\varepsilon$ small enough such that $r<1+\frac{2}{1+\varepsilon}$, so that 
	$$q:=\frac{2}{2-(r-1)(1+\varepsilon)}\in(1,\infty).$$
	Let $q':=\frac{q}{q-1}=\frac{2}{(r-1)(1+\varepsilon)}$ and $\lambda\in(0,1)$. Applying interpolation inequality and choosing $\lambda:=\frac{1}{r}$ (assuming WLOG that $r>1$), we obtain
	\begin{align*}
		\|\u\|^{r}_{\wi\L^{r(1+\varepsilon)}}&\leq\|\u\|^{\lambda r}_{\wi\L^{\lambda qr(1+\varepsilon)}}\|\u\|^{(1-\lambda)r}_{\wi\L^{(1-\lambda)q'r(1+\varepsilon)}}=\|\u\|_{\wi\L^{q(1+\varepsilon)}}\|\u\|^{r-1}_{\H}\\&\leq C\|\u\|_{\V}\|\u\|^{r-1}_{\H},
	\end{align*}
	which implies that
	\begin{align*}
		\|\M(\u)\|^2_{\V'}&\leq C\left[\|\u\|^2_{\V}+\|\u\|^2_{\H}\|\u\|^2_{\V}+\|\u\|^2_{\V}\|\u\|^{2(r-1)}_{\H}\right]\\&\leq C(1+\|\u\|^2_{\V})(1+\|\u\|^{2\max\{r-1,1\}}_{\H}).
	\end{align*}
	Hence condition  (H4)  of Assumption 1 in \cite{TMRZ} is satisfied.
\end{remark}
\begin{remark}
In the light of Remark \ref{rin1,3}, we will consider the case $d=2$ with $r=3$, $d=2,3$ with $r\in(3,\infty)$ and $d=r=3$ with $2\beta\mu\geq1$ only in the rest of the paper. 
\end{remark}
\subsection{An approximation of the system \eqref{SCBF}}
In this subsection, we present an approximating system for the stochastic CBF equations \eqref{SCBF}. For that, we first define an adapted finite-dimensional approximation of the Wiener process $\W(\cdot)$. Indeed, for a fixed orthonormal basis $\{e_k\}_{k\in\N}$ of $\mathrm{K}$ and a sequence $\{w_k\}_{k\in\N}$ of independent Brownian motions defined on a complete filtered probability space $(\Omega, \mathscr{F}, (\mathscr{F}_t)_{t\in \R}, \mathbb{P})$ such that $\W(t)$ can be written in the following form (\cite{DZ1})
\begin{align*}
	\W(t)=\sum_{k=1}^{\infty}w_k(t) e_k,  \ \ \ t\in[0,T].
\end{align*}
For $n\in\N$, we set $\sigma=\frac{T}{2^n}$ and define
\begin{align}\label{AS1}
	\dot{\W}^n(t) = \sum_{k=1}^{n}\frac{1}{\sigma}\left\{w_k\big(\Big\lfloor\frac{t}{\sigma}\Big\rfloor\sigma\big)-w_k\big((\Big\lfloor\frac{t}{\sigma}\Big\rfloor-1)\sigma\big)\right\}e_k=:\sum_{k=1}^{n} \dot{w}_k^n(t)e_k, \ \ t\in[0,T],
\end{align}
where $\lfloor s\rfloor$ denotes the greatest integer function for $s\in[0,T]$. Also, we set
\begin{align*}
	w_k(t)=\begin{cases}
		0 \ \ &\text{ for } t\leq0,\\
		w_k(T) \ \ &\text{ for } t\geq T,
	\end{cases}
\end{align*}
therefore $\dot{w}_k(t)=0$ for $t>T$. Then $\dot{w}^n_k(t), k=1,\ldots,n$ are $\mathscr{F}_t$-adapted and consequently $\dot{\W}^n(t)$.

For $k=1,\ldots,n$, let $\G_k:\H\to\H$ be defined by $\G_k(\u)=\G(\u)e_k, \u\in\H.$ We assume that for each $k$, $\G_k$ is Fr\'echet  differentiable with its derivative denoted by $\D\G_k:\H\to\mathcal{L}(\H;\H)$. Then we define the map
\begin{align}\label{AS2}
	\widetilde{\Tr}_n:\H\to\H \text{ such that } \widetilde{\Tr}_n(\u)=\sum_{k=1}^{n}\D\G_k(\u)\G_k(\u), \ \ \u\in\H.
\end{align}
Let us consider the approximating equations as 
	\begin{equation}\label{WZ_SCBF}
	\left\{
	\begin{aligned}
		\d \u^n(t)+[\mu \A\u^n(t)+\B(\u^n(t))&+\alpha\u^n(t)+\beta\mathcal{C}(\u^n(t))]\d t\\&= \G(\u^n(t))\dot{\W}^n(t)\d t-\frac{1}{2}\widetilde{\Tr}_n(\u^n(t))\d t, \\
		\u^n(0)&=\x,
	\end{aligned}
	\right.
\end{equation}
where $\dot{\W}^n$ and $\widetilde{\Tr}_n$ are given in \eqref{AS1} and \eqref{AS2}, respectively. The following assumption is needed to obtain the existence and uniqueness of the approximating system \eqref{WZ_SCBF}, which is similar to the conditions for Wong-Zakai approximation in the literature (see \cite{ICAM,TMRZ} etc. and references therein).
\begin{hypothesis}\label{HonG1}
	For each $k\in\N$, the map $\G_k$ is twice Fr\'echet differentiable with its second Fr\'echet derivative denoted by $\D^2\G_k:\H\to\mathcal{L}(\H;\mathcal{L}(\H,\H))\cong\mathcal{L}(\H\times\H;\H),$ and satisfies:
	\begin{itemize}
		\item[(H$'$.1)] For any $M>0$, there exists a positive constant $C(M)$ such that 
		\begin{align*}
			\sup_{k\in\N}\sup_{\|\v\|_{\H}\leq M} \left\{\|\D\G_k(\v)\|_{\mathcal{L}(\H;\H)}\lor\|\G_k(\v)\|_{\H}\lor\|\D^2\G_k(\v)\|_{\mathcal{L}(\H\times\H;\H)}\right\}\leq C(M),
		\end{align*}
	\begin{align*}
		\D\G^{*}_k|_{\V\cap\widetilde{\L}^{r+1}}:\V\cap\widetilde{\L}^{r+1}\to\V\cap\widetilde{\L}^{r+1},
	\end{align*}
\begin{align*}
	  \sup_{k\in\N}\sup_{\|\v\|_{\H}\leq M}\|\D\G_k(\v)^{*}\u\|_{\widetilde{\L}^{r+1}}&\leq C(M)\|\u\|_{\widetilde{\L}^{r+1}}, \ \ \u\in\widetilde{\L}^{r+1},\\
	  \sup_{k\in\N}\sup_{\|\v\|_{\H}\leq M}\|\D\G_k(\v)^{*}\u\|_{\V}&\leq C(M)\|\u\|_{\V}, \ \ \u\in\V,
\end{align*}
	and for $m\in\N$,
	\begin{align*}
		\lim_{m\to\infty} \sup_{\|\v\|_{\H}\leq M}\|\G(\v)-\G(\v)\circ\Pi_m\|_{\mathcal{L}_2}=0,
	\end{align*}
where $\Pi_m$ represents the orthogonal projection onto $\mathrm{K}_m:=\{e_1,\cdots,e_m\}$ in $\mathrm{K}$, that is, $\Pi_m\x=\sum_{j=1}^{m}\langle\x,e_j\rangle_{\mathrm{K}}e_j, \x\in\mathrm{K},$  and $\D\G_k(\cdot)^{*}$ denotes the dual operator of $\D\G_k(\cdot)$.
\item[(H$'$.2)] There exists a constant $L_2>0$ such that  for every $n\in\N$ and $\v,\v_1,\v_2\in\H$
\begin{align*}
	\|\widetilde{\Tr}_n(\v)\|^2_{\H}\leq L_2(1+\|\v\|^2_{\H}),
\end{align*}
and 
\begin{align*}
	(\widetilde{\Tr}_n(\v_2)-\widetilde{\Tr}_n(\v_1),\v_1-\v_2)\leq \rho(\v_2)\|\v_1-\v_2\|^2_{\H},
\end{align*}
where $\rho$ is the same as given in Hypothesis \ref{HonG}.
	\end{itemize}
\end{hypothesis}
For the examples which satisfies Hypotheses \ref{HonG} and \ref{HonG1}, we refer the readers to \cite{TMRZ} (see section 3, \cite{TMRZ}). Under Hypotheses \ref{HonG} and \ref{HonG1}, we can obtain the following solvability result for the system \eqref{WZ_SCBF} (see Theorem \ref{4.1} and Section \ref{sec5} below).
\begin{proposition}\label{E_U4}
	Assume that all the conditions of Hypotheses \ref{HonG} and \ref{HonG1} are satisfied. Then, for every $T>0$, $p>\gamma+2$, for all $\x\in \mathrm{L}^{p}(\Omega, \mathscr{F}_0,\mathbb{P};\H)$, the system \eqref{WZ_SCBF} has a pathwise unique strong solution  $\u^n\in\mathrm{L}^{p}(\Omega; \mathrm{L}^{\infty}(0,T;\H))\cap\mathrm{L}^2(\Omega; \mathrm{L}^2(0,T;\V))\cap\mathrm{L}^{r+1}(\Omega;\mathrm{L}^{r+1}(0,T;\widetilde{\L}^{r+1}))$, that is,
	\begin{align}\label{E_U5}
		\sup_{n\geq1}\mathbb{E}\left[\sup_{t\in[0,T]}\|\u^n(t)\|^{p}_{\H}+\int_{0}^{T}\|\u^n(t)\|^2_{\V}\d t+\int_{0}^{T}\|\u^n(t)\|^{r+1}_{\wi\L^{r+1}}\d t\right]<\infty,
	\end{align} 
with a modification having  paths in $\C([0,T];\H)\cap\mathrm{L}^2(0,T;\V)\cap\mathrm{L}^{r+1}(0,T;\wi\L^{r+1})$, $\mathbb{P}$-a.s.
\end{proposition}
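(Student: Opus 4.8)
The plan is to view \eqref{WZ_SCBF} as a special case of an abstract evolution equation driven by a random, time-dependent forcing and to solve it by the Faedo-Galerkin method combined with the local monotonicity of Theorem \ref{LocMon}. Setting $\f_n(t,\u):=\G(\u)\dot{\W}^n(t)-\frac{1}{2}\widetilde{\Tr}_n(\u)$, the system reads $\d\u^n+\M(\u^n)\d t=\f_n(t,\u^n)\d t$, $\u^n(0)=\x$. The crucial structural observation is that $\dot{\W}^n(t)$ is piecewise constant and $\mathscr{F}_t$-adapted, so on each dyadic subinterval $[j\sigma,(j+1)\sigma)$ the forcing $\f_n(t,\cdot)$ is a fixed (random) perturbation and no It\^o integral is present: pathwise, the equation is a deterministic locally monotone evolution equation. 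I would first verify that $\u\mapsto\M(\u)-\f_n(t,\u)$ retains hemicontinuity, local monotonicity, coercivity and the appropriate growth bound. Coercivity together with the $\V$- and $\widetilde{\L}^{r+1}$-dissipation come from $\M$ via $\langle\B(\u),\u\rangle=0$ and $\langle\mathcal{C}(\u),\u\rangle=\|\u\|_{\widetilde{\L}^{r+1}}^{r+1}$; the growth conditions (H.1) and (H$'$.2) ensure the forcing does not destroy coercivity; and local monotonicity combines Theorem \ref{LocMon} (in the form \eqref{fe2_1}--\eqref{fe2_3}) with the monotonicity estimates in (H.2) and (H$'$.2).

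For the scheme, let $\{\psi_k\}_{k\in\N}\subset\D(\A)$ be the orthonormal eigenbasis of the Stokes operator $\A$ (smooth, since $\mathcal{O}$ has $\mathrm{C}^2$-boundary), put $\H_m=\mathrm{span}\{\psi_1,\dots,\psi_m\}$ and let $P_m$ be the orthogonal projection of $\H$ onto $\H_m$. The Galerkin system $\d\u^n_m+P_m\M(\u^n_m)\d t=P_m\f_n(t,\u^n_m)\d t$, $\u^n_m(0)=P_m\x$, is a finite-dimensional random ODE whose coefficients are locally Lipschitz in $\u^n_m$ (all norms are equivalent on $\H_m$) and piecewise continuous in $t$; the Carath\'eodory theorem provides a unique local solution on each subinterval, continued across the dyadic grid and extended globally by the a priori estimates. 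Testing with $\u^n_m$ and using $\langle\B(\u^n_m),\u^n_m\rangle=0$ yields the energy identity with $\mu\|\u^n_m\|_{\V}^2+\beta\|\u^n_m\|_{\widetilde{\L}^{r+1}}^{r+1}$ on the dissipative side; for each fixed $n$, Gronwall's lemma (with $\int_0^T\|\dot{\W}^n(s)\|_{\mathrm{K}}\d s$ in the exponent, which has finite exponential moments for each fixed $n$) gives the bound \eqref{E_U5} without the supremum, the higher $p$-th moment being obtained by testing against $\|\u^n_m\|_{\H}^{p-2}\u^n_m$.

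Since these bounds are uniform in $m$, I would pass to the limit $m\to\infty$ along a subsequence, extracting weak/weak-$*$ limits in $\mathrm{L}^p(\Omega;\mathrm{L}^\infty(0,T;\H))$, $\mathrm{L}^2(\Omega;\mathrm{L}^2(0,T;\V))$ and $\mathrm{L}^{r+1}(\Omega;\mathrm{L}^{r+1}(0,T;\widetilde{\L}^{r+1}))$, and identify the weak limits of $\B$, $\mathcal{C}$, $\G$ and $\widetilde{\Tr}_n$ by the Minty-Browder monotonicity trick, using Theorem \ref{LocMon} with (H.2) and the monotonicity part of (H$'$.2); this shows the limit $\u^n$ solves \eqref{WZ_SCBF}. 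Pathwise uniqueness follows by testing the difference of two solutions with $\u^n-\v^n$, absorbing the nonlinear terms through the local monotonicity inequality and the monotonicity of $\G$ and $\widetilde{\Tr}_n$ and then applying Gronwall; for $d=2$, $r=3$ the monotonicity is only local, carrying the extra factor $\|\cdot\|_{\widetilde{\L}^4}^4$ in \eqref{fe2_1}, so a stopping-time localization is needed before the Gronwall step. Continuity of trajectories in $\C([0,T];\H)\cap\mathrm{L}^2(0,T;\V)\cap\mathrm{L}^{r+1}(0,T;\widetilde{\L}^{r+1})$ is then standard for such monotone systems.

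The genuine difficulty is the uniform-in-$n$ bound asserted by the supremum in \eqref{E_U5}. A direct energy estimate controls the forcing only through $\int_0^T\|\dot{\W}^n(s)\|_{\mathrm{K}}\d s$, whose expectation is of order $2^{n/2}$ because $\dot{\W}^n\sim\sigma^{-1/2}$, so Gronwall alone is useless uniformly in $n$. The resolution exploits the precise dyadic definition of $\dot{\W}^n$ and the compensating role of the correction $-\frac{1}{2}\widetilde{\Tr}_n$: on $[j\sigma,(j+1)\sigma)$ the solution $\u^n$ is $\mathscr{F}_{j\sigma}$-measurable, so after conditioning on the dyadic past the term $\int_0^t(\G(\u^n)\dot{\W}^n,\u^n)\d s$ splits into a discrete martingale, whose quadratic variation is controlled by (H.1), plus a compensator whose leading part is exactly cancelled by $\frac{1}{2}\int_0^t(\widetilde{\Tr}_n(\u^n),\u^n)\d s$; this is where the differentiability and uniform boundedness of $\G_k,\D\G_k,\D^2\G_k$ in Hypothesis \ref{HonG1} enter, via a Taylor expansion of $\G_k(\u^n)$ over each subinterval. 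The Burkholder-Davis-Gundy inequality then bounds the martingale part uniformly in $n$, yielding \eqref{E_U5}. This cancellation is the delicate step and is precisely why \eqref{WZ_SCBF} is handled as the special case $\f_n$ of the general system \eqref{C_SCBF} in Theorem \ref{4.1}.
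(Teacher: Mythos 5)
Your overall architecture coincides with the paper's: Proposition \ref{E_U4} is obtained there as the special case $\F_1=0$, $\F_2=\G$, $\F_3=0$, $\F=\frac{1}{2}\widetilde{\Tr}_n$ of Theorem \ref{4.1} for the general system \eqref{C_SCBF}, whose Section \ref{sec5} proof is exactly Faedo--Galerkin (finite-dimensional well-posedness from the locally monotone framework), the $p$-th moment estimate via It\^o's formula for $\|\cdot\|^{p}_{\H}$, weak limits identified by the Minty--Browder trick with exponential weights built from $\Phi$, $\rho$, $\|\dot{\W}^n\|^2_{\mathrm{K}}$, $\|\k\|^2_{\mathrm{K}}$, and uniqueness by stopping-time localization plus Gronwall --- matching your first three paragraphs. (The paper carries the genuine It\^o term $\F_1\d\W$ throughout because the general system is needed later for the support theorem, so your pathwise-deterministic reduction, while valid for \eqref{WZ_SCBF} itself, is not how the paper proceeds; this is immaterial.)

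The genuine gap is in your last paragraph, i.e., in the one step you yourself single out as delicate. The paper's uniform-in-$n$ bound in Lemma \ref{5.1} uses \emph{no} cancellation between $\G(\u^n)\dot{\W}^n$ and $-\frac{1}{2}\widetilde{\Tr}_n(\u^n)$: the correction term is bounded crudely through the $n$-uniform linear growth $\|\widetilde{\Tr}_n(\v)\|^2_{\H}\leq L_2(1+\|\v\|^2_{\H})$ assumed of $\F$ in (H$'$.2) --- that is precisely what the hypothesis is for --- while the forcing term $Q_2$ is made uniform in $n$ by writing $\dot{\W}^n(\zeta)=\frac{1}{\sigma}\int_{(\lfloor\frac{\zeta}{\sigma}\rfloor-1)\sigma}^{\lfloor\frac{\zeta}{\sigma}\rfloor\sigma}\Pi_n\d\W(\xi)$, exchanging the integrals by Fubini, and applying the BDG inequality: the factor $\sigma^{-1}$ is exactly absorbed by the two integrations over intervals of length $\sigma$, and only the global growth condition (H.1) enters (see \eqref{S6}). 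Your proposed route --- second-order Taylor expansion of $\G_k$, conditioning on the dyadic past, and cancellation of the compensator against $\frac{1}{2}\widetilde{\Tr}_n$ --- cannot deliver \eqref{E_U5} as stated: the bounds on $\D\G_k$ and $\D^2\G_k$ in (H$'$.1) are uniform only on balls $\{\|\v\|_{\H}\leq M\}$ with constants $C(M)$, and the Taylor remainders require controlling increments such as $\u^n(\zeta)-\u^n(\lfloor\frac{\zeta}{\sigma}\rfloor\sigma)$ against factors $|\dot{w}^n_k|$ of size $\sigma^{-1/2}$. In the paper these are exactly the estimates \eqref{MP6}--\eqref{MP7} and the bounds on $S_1,\ldots,S_6$, which are established only after localization by the stopping times \eqref{Tau1}, and those stopping times are effective only because the moment bounds \eqref{E_U2} and \eqref{E_U5} are already in hand. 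Hence using the Wong--Zakai compensation to prove \eqref{E_U5} is circular: that mechanism is the engine of the convergence Theorem \ref{Main}, not of the a priori estimate. Replacing your final paragraph by the representation-plus-BDG argument for the $\G(\u^n)\dot{\W}^n$ term, together with the direct (H$'$.2) bound for the correction term, repairs the proof and aligns it with the paper's.
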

\section{Wong-Zakai approximation for stochastic CBF equations}\label{sec3}\setcounter{equation}{0}
In this section, we prove our main result of this work, that is, Wong-Zakai approximation of the stochastic CBF system \eqref{SCBF}. Let us first recall the following Lemma which was proved in \cite{ICAM}.
\begin{lemma}[Lemma 2.1, \cite{ICAM}]
	Let $T>0$. Then there exists a constant $\delta_0>0$ such that for every $\delta>\frac{\delta_0}{\sqrt{T}}$, $t\in[0,T]$,
	\begin{align}
		\lim_{n\to\infty}\mathbb{P}\left(\sup_{1\leq k\leq n}\sup_{s\leq t}\left|\dot{w}^n_k(s)\right|>\delta n^{\frac{1}{2}}2^{\frac{n}{2}}\right)&=0,\label{W1}\\
		\lim_{n\to\infty}\mathbb{P}\left(\sup_{s\leq t}\|\dot{\W}^n(s)\|_{\mathrm{K}}>\delta n2^{\frac{n}{2}}\right)&=0.\label{W2}
	\end{align}
\end{lemma}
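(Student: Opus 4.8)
The plan is to exploit the piecewise-constant structure of the approximation and reduce both suprema over $s\in[0,t]$ to maxima over a finite grid, after which everything becomes a union bound against Gaussian (resp.\ chi-squared) tails. Write $\sigma=T/2^n$ and observe that, by definition \eqref{AS1}, the process $s\mapsto\dot w_k^n(s)$ is constant on each dyadic interval $[j\sigma,(j+1)\sigma)$ with value $\frac1\sigma\big(w_k(j\sigma)-w_k((j-1)\sigma)\big)$, and that all $n$ coordinates switch simultaneously at the same grid points. Hence, for $t\in[0,T]$, $\sup_{s\le t}|\dot w_k^n(s)|=\frac1\sigma\max_{0\le j\le \lfloor t/\sigma\rfloor}|w_k(j\sigma)-w_k((j-1)\sigma)|$ and $\sup_{s\le t}\|\dot\W^n(s)\|_{\mathrm K}^2=\frac1{\sigma^2}\max_{0\le j\le\lfloor t/\sigma\rfloor}\sum_{k=1}^n|w_k(j\sigma)-w_k((j-1)\sigma)|^2$, the second identity using orthonormality of $\{e_k\}$. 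Since $\lfloor t/\sigma\rfloor\le 2^n$, each maximum runs over at most $2^n+1$ indices, and each increment satisfies $w_k(j\sigma)-w_k((j-1)\sigma)=\sqrt\sigma\,Z_{j,k}$ with $\{Z_{j,k}\}$ i.i.d.\ standard normal. The key arithmetic is $1/\sqrt\sigma=2^{n/2}/\sqrt T$.

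For the first estimate, the event $\{|\dot w_k^n(s)|>\delta n^{1/2}2^{n/2}\}$ becomes, after multiplying by $\sqrt\sigma$, the event $\{|Z_{j,k}|>\delta\sqrt T\,n^{1/2}\}$. A union bound over the at most $n(2^n+1)$ Gaussians together with the tail estimate $\mathbb P(|Z|>x)\le 2e^{-x^2/2}$ yields a bound of the form $C\,n\,2^n e^{-\delta^2 T n/2}=C\,n\exp\!\big(n(\ln 2-\tfrac12\delta^2T)\big)$. This tends to $0$ precisely when $\delta^2 T>2\ln 2$, so I would set $\delta_0:=\sqrt{2\ln 2}$; then every $\delta>\delta_0/\sqrt T$ forces the exponent to be negative and yields \eqref{W1}.

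For the second estimate I would square the threshold: $\{\sup_{s\le t}\|\dot\W^n(s)\|_{\mathrm K}>\delta n2^{n/2}\}$ is the event $\{\frac1{\sigma^2}\max_j\sum_{k=1}^n|w_k(j\sigma)-w_k((j-1)\sigma)|^2>\delta^2 n^2 2^n\}$, which using $|w_k(j\sigma)-w_k((j-1)\sigma)|^2=\sigma Z_{j,k}^2$ and $1/\sigma=2^n/T$ simplifies to $\{\max_{0\le j\le\lfloor t/\sigma\rfloor}\chi^2_{n,j}>\delta^2 T n^2\}$, where $\chi^2_{n,j}:=\sum_{k=1}^n Z_{j,k}^2$ has a chi-squared law with $n$ degrees of freedom. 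A union bound over $\le 2^n+1$ indices combined with a Chernoff bound (using $\mathbb E[e^{\lambda\chi^2_n}]=(1-2\lambda)^{-n/2}$ for $\lambda<1/2$, or the Laurent--Massart deviation inequality) controls $\mathbb P(\chi^2_n>\delta^2Tn^2)$ by $e^{-cn^2}$ for some $c>0$ once the threshold $\delta^2Tn^2$ exceeds the mean $n$. Because this decay is of order $e^{-cn^2}$ while the union-bound count is only $2^n=e^{(\ln2)n}$, the product still tends to $0$ for every $\delta>0$; in particular the same $\delta_0$ chosen above is more than enough, giving \eqref{W2}.

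I expect the genuine obstacle to be the sharp calibration in the first estimate: the normalizations $n^{1/2}2^{n/2}$ in \eqref{W1} and $n2^{n/2}$ in \eqref{W2} are tuned exactly so that the exponential count $2^n$ of dyadic intervals is beaten by the Gaussian/chi-squared tail, and one must track these factors carefully to extract the threshold $\delta_0=\sqrt{2\ln 2}$ and verify it is admissible for both limits. The second estimate is comparatively soft, since the $n^2$ scaling of the threshold makes the chi-squared large-deviation bound dominate for any $\delta>0$; the only care needed there is to fix (or optimize) $\lambda$ in the Chernoff bound so that the resulting constant $c$ is positive uniformly in $n$.
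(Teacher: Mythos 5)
Your proof is correct, and it is worth noting at the outset that the paper contains no proof of this statement at all: the lemma is imported verbatim as Lemma 2.1 of \cite{ICAM}, so the only meaningful comparison is with the argument in that reference. Your reduction to the dyadic grid is exactly right: $s\mapsto\dot w^n_k(s)$ is constant on $[j\sigma,(j+1)\sigma)$, all $n$ coordinates refresh at the same grid points, the $j=0$ term vanishes because $w_k\equiv0$ on $(-\infty,0]$, and the scaling $1/\sqrt{\sigma}=2^{n/2}/\sqrt{T}$ converts the threshold $\delta n^{1/2}2^{n/2}$ into the Gaussian threshold $\delta\sqrt{T}\,n^{1/2}$. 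The union bound over at most $n(2^n+1)$ increments then gives $Cn\exp\big(n(\ln 2-\delta^2T/2)\big)$, which identifies the admissible value $\delta_0=\sqrt{2\ln 2}$ and also explains why the strict inequality $\delta>\delta_0/\sqrt{T}$ in the statement is needed: at equality the surviving polynomial factor $n$ ruins the limit.

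Where you genuinely depart from the standard route is \eqref{W2}. Since $\|\dot{\W}^n(s)\|^2_{\mathrm{K}}=\sum_{k=1}^n|\dot w^n_k(s)|^2\le n\max_{1\le k\le n}|\dot w^n_k(s)|^2$, the event in \eqref{W2} is contained in the event in \eqref{W1}; the factor $n^{1/2}$ separating the two normalizations is precisely this Cauchy--Schwarz step over the $n$ coordinates, so \eqref{W2} follows from \eqref{W1} with the same $\delta_0$ and no further estimate --- this containment is the cheap argument the two normalizations are calibrated for. Your chi-squared Chernoff bound is an independent and in fact stronger treatment: the threshold $\delta^2Tn^2$ grows quadratically against the $\chi^2_n$ mean $n$, so you get $e^{-cn^2}$ decay against a union count of only $2^n+1$, hence \eqref{W2} for \emph{every} $\delta>0$, not merely for $\delta>\delta_0/\sqrt{T}$. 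Both routes are sound; yours buys a sharper second limit at the cost of one extra large-deviation input (the moment generating function of $\chi^2_n$, or Laurent--Massart), while the containment argument buys brevity. The only microscopic imprecision is that your $Z_{j,k}$ are i.i.d.\ standard normal only for $j\ge1$, the $j=0$ entry being deterministically zero, which is harmless in every bound. No gaps.
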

Let us define stopping times for $M\geq0$, $n\in\N$, $\delta>\frac{\delta_0}{\sqrt{T}}$:
\begin{align*}
	\tau^{(1)}_{M}&:=\inf_{t\geq 0}\left\{t:\|\u(t)\|_{\H}+\int_{0}^{t}\|\u(\zeta)\|^2_{\V}\d\zeta+\int_{0}^{t}\|\u(\zeta)\|^{r+1}_{\wi\L^{r+1}}\d\zeta>M\right\}\land T,\\	\tau^{(2)}_{n,M}&:=\inf_{t\geq 0}\left\{t:\|\u^n(t)\|_{\H}+\int_{0}^{t}\|\u^n(\zeta)\|^2_{\V}\d\zeta+\int_{0}^{t}\|\u^n(\zeta)\|^{r+1}_{\wi\L^{r+1}}\d\zeta>M\right\}\land T,\\
	\tau^{(3)}_{n}&:=\inf_{t\geq 0}\bigg\{t:\big[\sup_{s\in[0,t]}\sup_{1\leq k\leq n}\left|\dot{w}^n_k(s)\right|\big]\lor\big[n^{-\frac{1}{2}}\sup_{s\in[0,t]}\|\dot{\W}^n(s)\|_{\mathrm{K}}\big]>\delta n^{\frac{1}{2}} 2^{\frac{n}{2}}\bigg\}\land T,
\end{align*}
and
\begin{align}\label{Tau1}
	\tau_{n,M}:=\tau^{(1)}_{M}\land\tau^{(2)}_{n,M}\land\tau^{(3)}_{n}.
\end{align}
From \eqref{E_U2}, \eqref{E_U5} and \eqref{W1}-\eqref{W2}, we infer that
\begin{align*}
	\lim_{M\to\infty}\mathbb{P}\left(\tau^{(1)}_{M}=T\right)=	\lim_{n\to\infty}\mathbb{P}\left(\tau^{(3)}_{n}=T\right)=1 \text{ and } \\
		\lim_{M\to\infty}\mathbb{P}\left(\tau^{(2)}_{n,M}=T\right)=1, \ \ \text{ uniformly for }n\in\N.
\end{align*}
\begin{remark}
 In order to deal with the difference of equations \eqref{SCBF} and \eqref{WZ_SCBF}, it is observed that the integral $\int_{0}^{\cdot} \G(\u^n(s))\dot{\W}^n(s)\d s$ cannot be considered as stochastic integral directly. Instead, we use following identity (see Remark 2.7, \cite{TMRZ})
 \begin{align}
 	\int_{0}^{t} \G\Big(\u^n\big((\Big\lfloor\frac{s}{\sigma}\Big\rfloor-1)\sigma\big)\Big)\dot{\W}^n(s)\d s=\int_{0}^{t}\bigg(\frac{1}{\sigma}\int\limits_{\lceil\frac{s}{\sigma}\rceil\sigma}^{(\lceil\frac{s}{\sigma}\rceil+1)\sigma}1_{\{\xi\leq t\}}\d\xi\bigg)\G\Big(\u^n\big(\Big\lfloor\frac{s}{\sigma}\Big\rfloor\sigma\big)\Big)\circ\Pi_n\d\W(s),
 \end{align}
to compare with the corresponding diffusion term $\int_{0}^{t} \G(\u(s))\d\W(s)$, where $\lceil s\rceil$ denotes the smallest integer function for $s\in[0,T]$.
\end{remark}
Now we state our main result of this work.
\begin{theorem}\label{Main}
	Assume that all the conditions of Hypotheses \ref{HonG} and \ref{HonG1} are satisfied, $p>\gamma+2$ and $\x\in \mathrm{L}^{p}(\Omega, \mathscr{F}_0,\mathbb{P};\H)$. Let $\u$ and $\u^n$ be the solutions to the systems \eqref{SCBF} and \eqref{WZ_SCBF}, respectively, with same initial data $\x$. Then
	\begin{align}\label{Main2}
		\lim_{n\to\infty}\mathbb{E}\left[\sup_{t\in[0,T]}\|\u(t)-\u^n(t)\|^2_{\H}\right]=0.
	\end{align}
\end{theorem}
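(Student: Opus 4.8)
The plan is to estimate $\u-\u^n$ directly in the $\H$-norm, using the local monotonicity of $\M$ (Theorem \ref{LocMon}) to absorb the deterministic nonlinear contributions, and devoting the main effort to comparing the genuine It\^o integral $\int_0^{\cdot}\G(\u)\d\W$ with the mollified drift $\int_0^{\cdot}\G(\u^n)\dot{\W}^n\d\zeta$ and its Wong--Zakai correction $-\tfrac12\int_0^{\cdot}\widetilde{\Tr}_n(\u^n)\d\zeta$.

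First I would localize with the stopping time $\tau_{n,M}$ from \eqref{Tau1}. Writing
\begin{align*}
	\mathbb{E}\Big[\sup_{t\in[0,T]}\|\u(t)-\u^n(t)\|_\H^2\Big]
	&=\mathbb{E}\Big[\sup_{t\in[0,T]}\|\u(t)-\u^n(t)\|_\H^2\,\mathbf{1}_{\{\tau_{n,M}=T\}}\Big]\\
	&\quad+\mathbb{E}\Big[\sup_{t\in[0,T]}\|\u(t)-\u^n(t)\|_\H^2\,\mathbf{1}_{\{\tau_{n,M}<T\}}\Big],
\end{align*}
the second term is bounded, via H\"older's inequality and the uniform moment estimates \eqref{E_U2} and \eqref{E_U5}, by $C\,\mathbb{P}(\tau_{n,M}<T)^{1-2/p}$; since $\lim_{M\to\infty}\mathbb{P}(\tau^{(1)}_M=T)=\lim_{n\to\infty}\mathbb{P}(\tau^{(3)}_n=T)=1$ and $\lim_{M\to\infty}\mathbb{P}(\tau^{(2)}_{n,M}=T)=1$ uniformly in $n$, it suffices to prove, for each fixed $M$, that $\mathbb{E}\big[\sup_{t\in[0,T]}\|\u(t\wedge\tau_{n,M})-\u^n(t\wedge\tau_{n,M})\|_\H^2\big]\to0$ as $n\to\infty$, and then let $M\to\infty$.

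Subtracting \eqref{WZ_SCBF} from \eqref{SCBF} and applying It\^o's formula to $\|\u(t)-\u^n(t)\|_\H^2$ (only $\u$ carries a martingale part, so the quadratic variation contributes $\|\G(\u)\|_{\mathcal{L}_2}^2$) produces an energy identity whose drift part is $-2\langle\M(\u)-\M(\u^n),\u-\u^n\rangle$. By Theorem \ref{LocMon} this is at most $\tfrac{27}{16\mu^3}\|\u^n\|_{\wi\L^4}^4\|\u-\u^n\|_\H^2$ for $d=2,r=3$, at most $2\eta\|\u-\u^n\|_\H^2$ for $r>3$, and nonpositive for $d=r=3$ with $2\beta\mu\ge1$; on the stopped interval the weight $\|\u^n\|_{\wi\L^4}^4=\|\u^n\|_{\wi\L^{r+1}}^{r+1}$ is time-integrable with $\int_0^{\tau_{n,M}}\|\u^n\|_{\wi\L^{r+1}}^{r+1}\d\zeta\le M$ almost surely, which makes it an admissible weight for a Gronwall argument.

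The crux, and the main obstacle, is the control of the collected noise terms
\begin{align*}
	\int_0^{t\wedge\tau_{n,M}}\Big[&2(\u-\u^n,\G(\u)\d\W)-2(\u-\u^n,\G(\u^n)\dot{\W}^n)\d\zeta\\
	&+(\u-\u^n,\widetilde{\Tr}_n(\u^n))\d\zeta+\|\G(\u)\|_{\mathcal{L}_2}^2\d\zeta\Big].
\end{align*}
Using the identity recalled in the Remark preceding the theorem (cf.\ Remark~2.7 of \cite{TMRZ}), I would rewrite the ordinary integral driven by $\dot{\W}^n$ as a genuine It\^o integral against $\d\W$, carrying the time-averaging weight and the projection $\Pi_n$, and then Taylor-expand $\G_k$ across each time-lattice subinterval using the first and second Fr\'echet derivatives from Hypothesis \ref{HonG1}. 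The second-order term of this expansion reproduces $\tfrac12\widetilde{\Tr}_n(\u^n)$, cancelling the Wong--Zakai correction, while the surviving martingale differences are estimated through the Burkholder--Davis--Gundy inequality together with the growth and local-monotonicity conditions (H.1)--(H.2), the bounds in (H$'$.1) on $\D\G_k$ and $\D^2\G_k$, and the approximation property $\sup_{\|\v\|_\H\le M}\|\G(\v)-\G(\v)\circ\Pi_m\|_{\mathcal{L}_2}\to0$. After taking suprema in time, applying Burkholder--Davis--Gundy, and absorbing the small constants into the left-hand side, one reaches a (random) integral inequality for $\sup_{s\le t}\|\u(s\wedge\tau_{n,M})-\u^n(s\wedge\tau_{n,M})\|_\H^2$ with random weight $1+\|\u^n\|_{\wi\L^4}^4$ (whose time integral is bounded by $1+M$ a.s.) and with an inhomogeneity $a_n\to0$ collecting the projection error, the time-discretization error, and the higher-order Taylor remainders; a stochastic Gronwall lemma then gives $\mathbb{E}\big[\sup_{t\in[0,T]}\|\u(t\wedge\tau_{n,M})-\u^n(t\wedge\tau_{n,M})\|_\H^2\big]\le C(M)\,a_n\to0$, and letting $M\to\infty$ yields \eqref{Main2}. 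The delicate point throughout is that the fast-growing Forchheimer nonlinearity forces the localization to include the $\wi\L^{r+1}$-norm, so the noise estimates must be kept compatible with the pure $\H$-energy framework.
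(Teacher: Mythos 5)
Your proposal follows essentially the same route as the paper's proof: localization by $\tau_{n,M}$ together with the moment bounds \eqref{E_U2} and \eqref{E_U5}, It\^o's formula for $\|\u-\u^n\|_{\H}^2$, rewriting the $\dot{\W}^n$-driven integral as a genuine It\^o integral (Remark 2.7 of \cite{TMRZ}), a Taylor expansion of $\G_k$ over the lattice subintervals with substitution of the equation for the increment to produce the cancellation with $-\frac{1}{2}\widetilde{\Tr}_n$, BDG plus conditions (H.1)--(H.2) and (H$'$.1) for the remaining martingale, projection and time-discretization errors (the paper's Step 2 increment estimates \eqref{MP6}--\eqref{MP7''}, which your ``$a_n\to0$'' implicitly contains), and the stochastic Gronwall lemma of \cite{GM}; invoking Theorem \ref{LocMon} directly instead of estimating the convective and Forchheimer terms by hand (the paper's $J_1$, $J_2$, with weight $\|\u\|^2_{\H}\|\u\|^2_{\V}$ rather than your $\|\u^n\|^4_{\wi\L^4}$) is an equivalent shortcut, since the coercive $\V$- and $\wi\L^{r+1}$-terms retained by the paper are ultimately discarded. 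One small correction: the compensation of the Wong--Zakai correction term comes from the \emph{first-order} Taylor term applied to the noise part of the increment, i.e.\ the contribution quadratic in $\dot{w}^n_k$ (the paper's $S_4$), whereas the genuine second-order Fr\'echet remainder (the paper's $S_6$) is merely an error term, so your attribution of the cancellation to ``the second-order term of the expansion'' should be rephrased accordingly.
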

\begin{proof}
	In order to prove \eqref{Main2}, it is enough to show that for sufficiently large $M>0$,
	\begin{align}\label{MP1}
		\lim_{n\to\infty}\mathbb{E}\bigg[\sup_{t\in[0,\tau_{n,M}]}\|\u(t)-\u^n(t)\|^2_{\H}\bigg]=0,
	\end{align}
where $\tau_{n,M}$ is given by \eqref{Tau1}. Set $\Omega_{n,M}:=\{\omega\in\Omega:\tau_{n,M}=T\}$ for $M>0$. It follows from \eqref{E_U2}, \eqref{E_U5} and \eqref{Tau1} that for any $\varepsilon>0$, there exists a constant $M_{\varepsilon}$ (sufficiently large and independent of $n$), still denoting by $M$, such that
\begin{align}\label{MP2}	\mathbb{E}\bigg[\sup_{t\in[0,T]}\chi_{\Omega^{c}_{n,M}}\|\u(t)-\u^n(t)\|^2_{\H}\bigg]\leq\mathbb{P}\left(\Omega^c_{n,M}\right)^{\frac{p-2}{2}}\mathbb{E}\bigg[\sup_{t\in[0,T]}\|\u(t)-\u^n(t)\|^p_{\H}\bigg]^{\frac{2}{p}}\leq \frac{\varepsilon}{2}.
\end{align}
 In fact, \eqref{MP2} implies that we only need to prove \eqref{MP1} to obtain \eqref{Main2}. For simplicity, we denote $\tau_{n,M}$ by $\tau_n$. 
 The proof is divided into following four steps.
 \vskip 2mm
 \noindent
 \textbf{Step 1:} According to the definition of $\tau_n$, for some fixed constant $\delta$ with $\delta>\frac{\delta_0}{\sqrt{T}}$, we can find a constant $C(M)$ such that for all $t\in[0,\tau_n]$ and $k=1,\ldots,n$,
 \begin{equation}\label{MP3}
 	\left\{
 \begin{aligned}
 	\|\u(t)\|_{\H}+\|\u^n(t)\|_{\H}&\leq C(M),\\
 	\int_{0}^{t}\|\u(\zeta)\|^2_{\V}\d \zeta + 	\int_{0}^{t}\|\u^n(\zeta)\|^2_{\V}\d \zeta&\leq C(M),\\
 	 	\int_{0}^{t}\|\u(\zeta)\|^{r+1}_{\wi\L^{r+1}}\d \zeta + 	\int_{0}^{t}\|\u^n(\zeta)\|^{r+1}_{\wi\L^{r+1}}\d \zeta&\leq C(M),\  \\
 	 	\left|\dot{w}^n_k(t)\right|+n^{-\frac{1}{2}}\|\dot{\W}^n(t)\|_{\mathrm{K}}&\leq2\delta n^{\frac{1}{2}} 2^{\frac{n}{2}}.
 \end{aligned}
	\right.
\end{equation}
Now applying It\^o's formula (cf. \cite{MTM2,MTM1}) to $\|\u^n(\cdot)-\u(\cdot)\|_{\H}^2$, we get
\begin{align}\label{MP5}
	&\|\u^n(t)-\u(t)\|^2_{\H}+2\mu\int_{0}^{t}\|\u^n(\zeta)-\u(\zeta)\|^2_{\V}\d\zeta+2\alpha\int_{0}^{t}\|\u^n(\zeta)-\u(\zeta)\|^2_{\H}\d\zeta\nonumber\\&=-2\int_{0}^{t}\left\langle\B\big(\u^n(\zeta)\big)-\B\big(\u(\zeta)\big),\u^n(\zeta)-\u(\zeta)\right\rangle\d\zeta\nonumber\\&\quad-2\beta\int_{0}^{t}\left\langle\mathcal{C}\big(\u^n(\zeta)\big)-\mathcal{C}\big(\u(\zeta)\big),\u^n(\zeta)-\u(\zeta)\right\rangle\d\zeta\nonumber\\&\quad+\int_{0}^{t}\bigg\|\bigg(\frac{1}{\sigma}\int\limits_{\lceil\frac{\zeta}{\sigma}\rceil\sigma}^{(\lceil\frac{\zeta}{\sigma}\rceil+1)\sigma}1_{\{\xi\leq t\}}\d\xi\bigg)\G\big(\u^n(\Big\lfloor\frac{\zeta}{\sigma}\Big\rfloor\sigma)\big)\Pi_n-\G\big(\u(\zeta)\big)\bigg\|^2_{\mathcal{L}_2}\d\zeta\nonumber\\&\quad+2\int_{0}^{t}\bigg(\u^n(\zeta)-\u(\zeta),\bigg[\bigg(\frac{1}{\sigma}\int\limits_{\lceil\frac{\zeta}{\sigma}\rceil\sigma}^{(\lceil\frac{\zeta}{\sigma}\rceil+1)\sigma}1_{\{\xi\leq t\}}\d\xi\bigg)\G\big(\u^n(\Big\lfloor\frac{\zeta}{\sigma}\Big\rfloor\sigma)\big)\Pi_n-\G\big(\u(\zeta)\big)\bigg]\d\W(\zeta)\bigg)\nonumber\\&\quad+2\int_{0}^{t}\bigg(\bigg[\G\big(\u^n(\zeta)\big)-\G\Big(\u^n\big((\Big\lfloor\frac{\zeta}{\sigma}\Big\rfloor-1)\sigma\big)\Big)\bigg]\dot{\W}^n(\zeta)-\frac{1}{2}\widetilde{\Tr}\big(\u^n(\zeta)\big),\ \u^n(\zeta)-\u(\zeta)\bigg)\d\zeta\nonumber\\&=:J_1(n,t)+J_2(n,t)+J_3(n,t)+J_4(n,t)+J_5(n,t).
\end{align}
\textbf{Step 2:} \textit{Claim:}There exists a constant $C(T,M)$ such that
\begin{align}
	\mathbb{E}\bigg[\int_{0}^{\tau_n}\|\u(\zeta)-\u(\Big\lfloor\frac{\zeta}{\sigma}\Big\rfloor\sigma)\|^2_{\H}\d\zeta\bigg]&\leq C(T,M)2^{-\frac{3}{4}n},\label{MP6}\\	\mathbb{E}\bigg[\int_{0}^{\tau_n}\|\u^n(\zeta)-\u^n(\Big\lfloor\frac{\zeta}{\sigma}\Big\rfloor\sigma)\|^2_{\H}\d\zeta\bigg]&\leq C(T,M)2^{-\frac{3}{4}n},\label{MP7}\\
	\mathbb{E}\bigg[\int_{0}^{\tau_n}\|\u(\zeta)-\u\big((\Big\lfloor\frac{\zeta}{\sigma}\Big\rfloor-1)\sigma\big)\|^2_{\H}\d\zeta\bigg]&\leq C(T,M)2^{-\frac{3}{4}n},\label{MP6'}\\
	\mathbb{E}\bigg[\int_{0}^{\tau_n}\|\u^n(\zeta)-\u^n\big((\Big\lfloor\frac{\zeta}{\sigma}\Big\rfloor-1)\sigma\big)\|^2_{\H}\d\zeta\bigg]&\leq C(T,M)2^{-\frac{3}{4}n},\label{MP6''}\\	\mathbb{E}\bigg[\int_{0}^{\tau_n}\|\u(\zeta)-\u(\Big\lceil\frac{\zeta}{\sigma}\Big\rceil\sigma)\|^2_{\H}\d\zeta\bigg]&\leq C(T,M)2^{-\frac{3}{4}n},\label{MP7'}\\	\mathbb{E}\bigg[\int_{0}^{\tau_n}\|\u(\zeta)-\u(\Big\lceil\frac{\zeta}{\sigma}\Big\rceil\sigma)\|^2_{\H}\d\zeta\bigg]&\leq C(T,M)2^{-\frac{3}{4}n}.\label{MP7''}
\end{align}
\vskip2mm
\noindent
\textit{Proof of \eqref{MP6}}: Applying It\^o's formula to $\|\u(\cdot)-\u(\lfloor\frac{\xi}{\sigma}\rfloor\sigma)\|_{\H}^2$, integrating with respect to $\xi$ and taking expectation, we get for $t\in(0,\tau_n]$,
\begin{align}\label{MP8}
	&\mathbb{E}\left[\int_{0}^{t}\|\u(\xi)-\u(\Big\lfloor\frac{\xi}{\sigma}\Big\rfloor\sigma)\|^2_{\H}\d\xi\right]\nonumber\\&=2\mathbb{E}\left[\int_{0}^{t}\int_{\lfloor\frac{\xi}{\sigma}\rfloor\sigma}^{\xi}\big\langle\mu\A\u(\zeta)+\B(\u(\zeta))+\alpha\u(\zeta)+\mathcal{C}(\u(\zeta)), \u(\zeta)-\u(\Big\lfloor\frac{\xi}{\sigma}\Big\rfloor\sigma)\big\rangle\d\zeta\d\xi\right]\nonumber\\&\quad+\mathbb{E}\left[\int_{0}^{t}\int_{\lfloor\frac{\xi}{\sigma}\rfloor\sigma}^{\xi}\|\G\big(\u(\zeta)\big)\|^2_{\mathcal{L}_2}\d\zeta\d\xi\right]\nonumber\\&\quad+2\mathbb{E}\left[\int_{0}^{t}\int_{\lfloor\frac{\xi}{\sigma}\rfloor\sigma}^{\xi}\big(\u(\zeta)-\u(\Big\lfloor\frac{\xi}{\sigma}\Big\rfloor\sigma),\G\big(\u(\zeta)\big)\d\W(\zeta)\big)\d\xi\right]\nonumber\\&=:I_1(n)+I_2(n)+I_3(n).
\end{align}
\vskip2mm
\noindent
\textit{Estimate for $I_1(n)$:} Using Lemmas \ref{Holder} and \ref{Young}, \eqref{b0}, Remark \ref{RemarkB}, stochastic Fubini's theorem and \eqref{E_U2}, we obtain
\begin{align}\label{MP10}
	\left|I_1(n)\right|&\leq C \mathbb{E}\bigg[\int_{0}^{\tau_n}\int_{\lfloor\frac{\xi}{\sigma}\rfloor\sigma}^{\xi}\bigg\{\|\u(\zeta)\|^2_{\H}+\|\u(\zeta)\|^2_{\V}+\|\u(\zeta)\|^{r+1}_{\wi\L^{r+1}}+\|\u(\zeta)-\u(\Big\lfloor\frac{\xi}{\sigma}\Big\rfloor\sigma)\|^2_{\H}\nonumber\\&\qquad\qquad+\|\u(\zeta)-\u(\Big\lfloor\frac{\xi}{\sigma}\Big\rfloor\sigma)\|^2_{\V}+\|\u(\zeta)-\u(\Big\lfloor\frac{\xi}{\sigma}\Big\rfloor\sigma)\|^{r+1}_{\wi\L^{r+1}}\bigg\}\d\zeta\d\xi\bigg]\nonumber\\&\leq C\sigma \mathbb{E}\bigg[\int_{0}^{T}\bigg\{\|\u(\xi)\|^2_{\H}+\|\u(\xi)\|^2_{\V}+\|\u(\xi)\|^{r+1}_{\wi\L^{r+1}}+\|\u(\xi)-\u(\Big\lfloor\frac{\xi}{\sigma}\Big\rfloor\sigma)\|^2_{\H}\nonumber\\&\qquad\qquad+\|\u(\xi)-\u(\Big\lfloor\frac{\xi}{\sigma}\Big\rfloor\sigma)\|^2_{\V}+\|\u(\xi)-\u(\Big\lfloor\frac{\xi}{\sigma}\Big\rfloor\sigma)\|^{r+1}_{\wi\L^{r+1}}\bigg\}\d\xi\bigg]\nonumber\\&\leq C(T,M)2^{-n}.
\end{align}
\vskip2mm
\noindent
\textit{Estimate for $I_2(n)$:} From condition (H.1) of Hypothesis \ref{HonG} and \eqref{MP3}, it is immediate that 
\begin{align}\label{MP11}
	\left|I_2(n)\right|\leq C(T,M)2^{-n}.
\end{align}
\vskip2mm
\noindent
\textit{Estimate for $I_3(n)$:} We infer from Burkholder-Davies-Gundy's (BDG) inequality, condition (H.1) of Hypothesis \ref{HonG} and \eqref{MP3} that 
\begin{align}
	I_3(n)&\leq2\mathbb{E}\left[\sup_{t\in[0,\tau_{n}]}\int_{0}^{t}\int_{\lfloor\frac{\xi}{\sigma}\rfloor\sigma}^{\xi}\big\langle\u(\zeta)-\u(\Big\lfloor\frac{\xi}{\sigma}\Big\rfloor\sigma),\G\big(\u(\zeta)\big)\d\W(\zeta)\big\rangle\d\xi\right]\nonumber\\&\leq C \mathbb{E}\left[\int_{0}^{\tau_{n}}\int_{\lfloor\frac{\xi}{\sigma}\rfloor\sigma}^{\xi}\|\G\big(\u(\zeta)\big)\|^2_{\mathcal{L}_2}\|\u(\zeta)-\u(\Big\lfloor\frac{\xi}{\sigma}\Big\rfloor\sigma)\|^2_{\H}\d\zeta\d\xi\right]^{\frac{1}{2}}\label{MP12}\\&\leq C(T,M)2^{-\frac{n}{2}}.\label{MP13}
\end{align}
From \eqref{MP8}-\eqref{MP13}, we obtain
\begin{align}\label{MP14}
	\mathbb{E}\bigg[\int_{0}^{\tau_n}\|\u(\xi)-\u(\Big\lfloor\frac{\xi}{\sigma}\Big\rfloor\sigma)\|^2_{\H}\d\xi\bigg]\leq C(T,M)2^{-\frac{n}{2}}.
\end{align}
Again, applying stochastic Fubini's theorem in \eqref{MP12}, using condition (H.1) of Hypothesis \ref{HonG} and \eqref{MP14}, respectively, we obtain
\begin{align}\label{MP15}
	I_3(n)&\leq C\mathbb{E}\left[\sigma\int_{0}^{\tau_{n}}\|\G\big(\u(\zeta)\big)\|^2_{\mathcal{L}_2}\|\u(\zeta)-\u(\Big\lfloor\frac{\zeta}{\sigma}\Big\rfloor\sigma)\|^2_{\H}\d\zeta\right]^{\frac{1}{2}}\nonumber\\&\leq C\mathbb{E}\left[\sigma\int_{0}^{\tau_{n}}(1+\|\u(\zeta)\|^2_{\H})\|\u(\zeta)-\u(\Big\lfloor\frac{\zeta}{\sigma}\Big\rfloor\sigma)\|^2_{\H}\d\zeta\right]^{\frac{1}{2}}\nonumber\\&\leq C(T,M)2^{-\frac{3}{4}n}.
\end{align}
Hence, combining \eqref{MP8}-\eqref{MP12} and \eqref{MP15}, we obtain \eqref{MP6}. Similarly, one can show \eqref{MP7}-\eqref{MP7''}.
\vskip2mm
\noindent
\textbf{Step 3:} In this step, we estimate each term of the right hand side of \eqref{MP5} separately.
\vskip2mm
\noindent
\textit{Estimate for $J_1(n,t)$:} \textit{When $d=2$ and $r=3$.} Using \eqref{441}, \eqref{b1}, Lemmas \ref{Holder} and \ref{Young}, we obtain
\begin{align}\label{MP16}
	\left|J_1(n,t)\right|\leq\int_{0}^{t}\left[\mu\|\u^n(\zeta)-\u(\zeta)\|^2_{\V}+C\|\u(\zeta)\|^2_{\H}\|\u(\zeta)\|^2_{\V}\|\u^n(\zeta)-\u(\zeta)\|^2_{\H}\right]\d\zeta.
\end{align}
\vskip2mm
\noindent
\textit{When $d=r=3$.} Using \eqref{441}, Lemmas \ref{Holder} and \ref{Young}, we obtain
\begin{align}\label{MP17}
	\left|J_1(n,t)\right|&\leq \int_{0}^{t}\left[2\theta\mu\|\u^n(\zeta)-\u(\zeta)\|^2_{\V}+\frac{1}{2\theta\mu}\|\u(\zeta)\big(\u^n(\zeta)-\u(\zeta)\big)\|^2_{\H}\right]\d\zeta,
\end{align}
for $\theta\in(0,1]$.\\
\vskip2mm
\noindent
\textit{When $d=2,3$ and $r>3$.} Using \eqref{441}, Lemmas \ref{Holder} and \ref{Young}, we obtain
\begin{align}\label{MP18}
\left|J_1(n,t)\right|\leq \int_{0}^{t}\left[\mu\|\u^n(\zeta)-\u(\zeta)\|^2_{\V}+\frac{\beta}{2}\||\u(\zeta)|^{\frac{r-1}{2}}\left|\u^n(\zeta)-\u(\zeta)\right|\|^2_{\H}+C\|\u^n(\zeta)-\u(\zeta)\|^2_{\H}\right]\d\zeta.
\end{align}
\vskip2mm
\noindent
\textit{Estimate for $J_2(n,t)$:} Using \eqref{MO_c} and \eqref{a215}, we get
\begin{align}\label{MP19}
	J_2(n,t)&=-2\beta\int_{0}^{t}\left\langle\mathcal{C}\big(\u^n(\zeta)\big)-\mathcal{C}\big(\u(\zeta)\big),\u^n(\zeta)-\u(\zeta)\right\rangle\d\zeta\nonumber\\&=-\beta\int_{0}^{t}\||\u^n(\zeta)|^{\frac{r-1}{2}}\left|\u^n(\zeta)-\u(\zeta)\right|\|^2_{\H}\d\zeta-\beta\int_{0}^{t}\||\u(\zeta)|^{\frac{r-1}{2}}\left|\u^n(\zeta)-\u(\zeta)\right|\|^2_{\H}\d\zeta\nonumber\\&\leq-\frac{\beta}{2}\int_{0}^{t}\||\u^n(\zeta)|^{\frac{r-1}{2}}\left|\u^n(\zeta)-\u(\zeta)\right|\|^2_{\H}\d\zeta-\frac{\beta}{2}\int_{0}^{t}\||\u(\zeta)|^{\frac{r-1}{2}}\left|\u^n(\zeta)-\u(\zeta)\right|\|^2_{\H}\d\zeta\nonumber\\&\quad-\frac{\beta}{2^{r-1}}\int_{0}^{t}\|\u^n(\zeta)-\u(\zeta)\|^{r+1}_{\wi\L^{r+1}}\d\zeta.
\end{align}
\vskip2mm
\noindent
\textit{Estimate for $J_3(n,t)$:} For $s\in[0,t\land\tau_{n}]$, we have
\begin{align}\label{MP20}
	&\left|J_3(n,s)\right|\nonumber\\&\leq2\int_{0}^{t\land\tau_{n}}\|\Big(\frac{1}{\sigma}\int\limits_{\lceil\frac{\xi}{\sigma}\rceil\sigma}^{(\lceil\frac{\xi}{\sigma}\rceil+1)\sigma}1_{\{\xi>t\land\tau_{n}\}}\d\xi\Big)\G\big(\u^n(\Big\lfloor\frac{\zeta}{\sigma}\Big\rfloor\sigma)\big)\|^2_{\mathcal{L}_2}\d\zeta\nonumber\\&\quad+4\int_{0}^{t\land\tau_{n}}\|\G\big(\u^n(\Big\lfloor\frac{\zeta}{\sigma}\Big\rfloor\sigma)\big)-\G\big(\u^n(\zeta)\big)\|^2_{\mathcal{L}_2}\d\zeta+8\int_{0}^{t\land\tau_{n}}\|\G\big(\u^n(\zeta)\big)\Pi_n-\G\big(\u^n(\zeta)\big)\|^2_{\mathcal{L}_2}\d\zeta\nonumber\\&\quad+8\int_{0}^{t\land\tau_{n}}\|\G\big(\u^n(\zeta)\big)-\G\big(\u(\zeta)\big)\|^2_{\mathcal{L}_2}\d\zeta\nonumber\\&\leq\underbrace{2\int_{t\land\tau_{n}-2\sigma}^{t\land\tau_{n}}\|\G\big(\u^n(\Big\lfloor\frac{\zeta}{\sigma}\Big\rfloor\sigma)\big)\|^2_{\mathcal{L}_2}\d\zeta}_{:=I_4(n)}+\underbrace{4\int_{0}^{\tau_{n}}\rho\big(\u^n(\zeta)\big)\|\u^n(\Big\lfloor\frac{\zeta}{\sigma}\Big\rfloor\sigma)-\u^n(\zeta)\|^2_{\H}\d\zeta}_{:=I_5(n)}\nonumber\\&\quad+\underbrace{8\int_{0}^{\tau_{n}}\|\G\big(\u^n(\zeta)\big)\Pi_n-\G\big(\u^n(\zeta)\big)\|^2_{\mathcal{L}_2}\d\zeta}_{:=I_6(n)}+8\int_{0}^{\tau_{n}}\rho\big(\u(\zeta)\big)\|\u^n(\zeta)-\u(\zeta)\|^2_{\H}\d\zeta.
\end{align}
From the definition of $\sigma$ (for $I_4(n)$), condition (H.2) of Hypothesis \ref{HonG}, \eqref{MP3} and \eqref{MP7} (for $I_5(n)$), and condition (H$'$.1) of Hypothesis \ref{HonG1} and \eqref{MP3} (for $I_6(n)$), we obtain
\begin{align}\label{MP21}
	\lim_{n\to\infty}I_7(n)=0, \ \ \ \text{ where } I_7(n)=\mathbb{E}\left[I_4(n)+I_5(n)+I_6(n)\right].
\end{align}
It implies from \eqref{MP20} that 
\begin{align}\label{MP22}
 \mathbb{E}\left[\sup_{t\in[0,\tau_{n}]}\left|J_3(n,t)\right|\right]\leq I_7(n)+8\mathbb{E}\left[\int_{0}^{\tau_{n}}\rho\big(\u(\zeta)\big)\|\u^n(\zeta)-\u(\zeta)\|^2_{\H}\d\zeta\right].
\end{align}
\vskip2mm
\noindent
\textit{Estimate for $J_4(n,t)$:} Making use of BDG inequality, we get
\begin{align}\label{MP23}
	\mathbb{E}\left[\sup_{t\in[0,\tau_{n}]}\left|J_4(n,t)\right|\right]&\leq4\mathbb{E}\left[\sup_{t\in[0,\tau_{n}]}\|\u^n(t)-\u(t)\|_{\H}\cdot\big\{\sup_{t\in[0,\tau_{n}]}\left|J_3(n,t)\right|\big\}^{\frac{1}{2}}\right]\nonumber\\&\leq\frac{1}{2}\mathbb{E}\left[\sup_{t\in[0,\tau_{n}]}\|\u^n(t)-\u(t)\|_{\H}^2\right]+8\mathbb{E}\left[\sup_{t\in[0,\tau_{n}]}\left|J_3(n,t)\right|\right].
\end{align} 
\vskip2mm
\noindent
\textit{Estimate for $J_5(n,t)$:} \textit{We claim that
\begin{align}\label{MP24}
	I_8(n):=\mathbb{E}\left[\sup_{t\in[0,\tau_{n}]}\left|J_5(n,t)\right|\right]\to 0 \text{ as } n\to \infty.
\end{align}}
The proof of \eqref{MP24} depends on obtaining an appropriate term from $$\big[\G\big(\u^n(\zeta)\big)-\G\big(\u^n\big((\Big\lfloor\frac{\zeta}{\sigma}\Big\rfloor-1)\sigma\big)\big)\big]\dot{\W}^n(\zeta),$$ which can be compensated with the correction term $-\frac{1}{2}\widetilde{\Tr}\big(\u^n(\zeta)\big)$. To find this appropriate term, by \eqref{AS1} and \eqref{AS2}, we equivalently write 
\begin{align}\label{MP25}
	\widetilde{\Tr}_n(\u^n)=\sum_{k=1}^{n}\D\G_k(\u^n)\G_k(\u^n), \ \ \G(\u^n)\dot{\W}^n=\sum_{k=1}^{n}\G_k(\u^n)\dot{w}_k^n.
\end{align}
Since $\G_k$ is twice Fr\'echet differentiable for all $k\in\N$, applying the second order Taylor's formula to $\G_k$, we obtain (Theorem 7.9.1, \cite{PGC})
\begin{align}\label{MP26}
	&\G_k\big(\u^n(\zeta)\big)-\G_k\big(\u^n\big((\Big\lfloor\frac{\zeta}{\sigma}\Big\rfloor-1)\sigma\big)\big)\nonumber\\&=\D\G_k\big(\u^n\big((\Big\lfloor\frac{\zeta}{\sigma}\Big\rfloor-1)\sigma\big)\big)\big[\u^n(\zeta)-\u^n\big((\Big\lfloor\frac{\zeta}{\sigma}\Big\rfloor-1)\sigma\big)\big]\nonumber\\&\quad+\int_{0}^{1}(1-\theta)\D^2\G_k\big(\theta\u^n(\zeta)+(1-\theta)\u^n\big((\Big\lfloor\frac{\zeta}{\sigma}\Big\rfloor-1)\sigma\big)\big)\d\theta\nonumber\\&\qquad\big\{\u^n(\zeta)-\u^n\big((\Big\lfloor\frac{\zeta}{\sigma}\Big\rfloor-1)\sigma\big), \u^n(\zeta)-\u^n\big((\Big\lfloor\frac{\zeta}{\sigma}\Big\rfloor-1)\sigma\big)\big\},
\end{align}
where $\D^2\G_k(\w)\{\w_1,\w_2\}$ represents the value of the second Fr\'echet derivative $\D^2\G_k(\w)$ on the elements $\w_1$ and $\w_2$. In view of \eqref{WZ_SCBF}, we can write 
\begin{align}\label{MP27}
	\u^n(\zeta)-\u^n\big((\Big\lfloor\frac{\zeta}{\sigma}\Big\rfloor-1)\sigma\big)&=-\int_{(\lfloor\frac{\zeta}{\sigma}\rfloor-1)\sigma}^{\zeta} \bigg\{\mu \A\u^n(\xi)+\B\big(\u^n(\xi)\big)+\alpha\u^n(\xi)+\beta\mathcal{C}\big(\u^n(\xi)\big)\nonumber\\&\qquad\qquad\qquad\qquad- \G\big(\u^n(\xi)\big)\dot{\W}^n(\xi)+\frac{1}{2}\widetilde{\Tr}_n\big(\u^n(\xi)\big)\bigg\}\d\xi,
\end{align}
in $\V'+\wi\L^{\frac{r+1}{r}}$. Making use of \eqref{AS1} and \eqref{MP25} (second equality), we have
\begin{align}\label{MP28}
	&\int_{(\lfloor\frac{\zeta}{\sigma}\rfloor-1)\sigma}^{\zeta}  \G\big(\u^n(\xi)\big)\dot{\W}^n(\xi)\d\xi\nonumber\\&=\sum_{j=1}^{n}\left[\dot{w}^n_j\big((\Big\lfloor\frac{\zeta}{\sigma}\Big\rfloor-1)\sigma\big)\int_{(\lfloor\frac{\zeta}{\sigma}\rfloor-1)\sigma}^{\lfloor\frac{\zeta}{\sigma}\rfloor\sigma}\G_j\big(\u^n(\xi)\big)\d\xi+\dot{w}^n_j(\zeta)\int_{\lfloor\frac{\zeta}{\sigma}\rfloor\sigma}^{\zeta}\G_j\big(\u^n(\xi)\big)\d\xi\right].
\end{align}
Now, putting the values from \eqref{MP25}-\eqref{MP28} into $J_5(n,t)$, we have
\begin{align}\label{MP29}
	J_5(n,t)=:S_1(n,t)+S_2(n,t)+S_3(n,t)+S_4(n,t)+S_5(n,t)+S_6(n,t),
\end{align}
where
\begin{align*}
	S_1(n,t):=&\sum_{k=1}^{n}\int_{0}^{t} \dot{w}^n_k(\zeta)\bigg\langle\D\G_k\big(\u^n\big((\Big\lfloor\frac{\zeta}{\sigma}\Big\rfloor-1)\sigma\big)\big)\int_{(\lfloor\frac{\zeta}{\sigma}\rfloor-1)\sigma}^{\zeta}\big\{\mu \A\u^n(\xi)+\B\big(\u^n(\xi)\big)+\alpha\u^n(\xi)\nonumber\\&\qquad\qquad\qquad+\beta\mathcal{C}\big(\u^n(\xi)\big)\big\}\d\xi,\u^n(\zeta)-\u(\zeta)\bigg\rangle\d\zeta,\\
	S_2(n,t):=&\sum_{k=1}^{n}\sum_{j=1}^{n}\int_{0}^{t}\dot{w}^n_k(\zeta)\dot{w}^n_j\big((\Big\lfloor\frac{\zeta}{\sigma}\Big\rfloor-1)\sigma\big)\bigg(\D\G_k\big(\u^n\big((\Big\lfloor\frac{\zeta}{\sigma}\Big\rfloor-1)\sigma\big)\big)\\&\qquad\qquad\qquad\times\int_{(\lfloor\frac{\zeta}{\sigma}\rfloor-1)\sigma}^{\lfloor\frac{\zeta}{\sigma}\rfloor\sigma}\G_j\big(\u^n(\xi)\big)\d\xi,\u^n(\zeta)-\u(\zeta)\bigg)\d\zeta,\\
	S_3(n,t):=&\sum_{k=1}^{n}\sum_{1\leq j\leq n, j\neq k}^{n}\int_{0}^{t}\dot{w}^n_k(\zeta)\dot{w}^n_j(\zeta)\bigg(\D\G_k\big(\u^n\big((\Big\lfloor\frac{\zeta}{\sigma}\Big\rfloor-1)\sigma\big)\big)\\&\qquad\qquad\qquad\times\int_{\lfloor\frac{\zeta}{\sigma}\rfloor\sigma}^{\zeta}\G_j\big(\u^n(\xi)\big)\d\xi,\u^n(\zeta)-\u(\zeta)\bigg)\d\zeta,\\
	S_4(n,t):=&\sum_{k=1}^{n}\int_{0}^{t}\bigg([\dot{w}^n_k(\zeta)]^2\D\G_k\big(\u^n\big((\Big\lfloor\frac{\zeta}{\sigma}\Big\rfloor-1)\sigma\big)\big)\int_{\lfloor\frac{\zeta}{\sigma}\rfloor\sigma}^{\zeta}\G_k\big(\u^n(\xi)\big)\d\xi\\&\qquad\qquad\qquad-\frac{1}{2}\D\G_k\big(\u^n(s)\big)\G_k\big(\u^n(s)\big),\u^n(\zeta)-\u(\zeta)\bigg)\d\zeta,\\
	S_5(n,t):=&-\frac{1}{2}\sum_{k=1}^{n}\int_{0}^{t}\dot{w}^n_k(\zeta)\bigg(\D\G_k\big(\u^n\big((\Big\lfloor\frac{\zeta}{\sigma}\Big\rfloor-1)\sigma\big)\big)\\&\qquad\qquad\qquad\times\int_{(\lfloor\frac{\zeta}{\sigma}\rfloor-1)\sigma}^{\zeta}\widetilde{\Tr}_n\big(\u^n(\xi)\big)\d\xi,\u^n(\zeta)-\u(\zeta)\bigg)\d\zeta,\\
	S_6(n,t):=&\sum_{k=1}^{n}\int_{0}^{t}\dot{w}^n_k(\zeta)\bigg(\int_{0}^{1}(1-\theta)\D^2\G_k\big(\theta\u^n(\zeta)+(1-\theta)\u^n\big((\Big\lfloor\frac{\zeta}{\sigma}\Big\rfloor-1)\sigma\big)\big)\d\theta\nonumber\\&\qquad\big\{\u^n(\zeta)-\u^n\big((\Big\lfloor\frac{\zeta}{\sigma}\Big\rfloor-1)\sigma\big), \u^n(\zeta)-\u^n\big((\Big\lfloor\frac{\zeta}{\sigma}\Big\rfloor-1)\sigma\big)\big\},\u^n(\zeta)-\u(\zeta)\bigg)\d\zeta.
\end{align*}
Next, we estimate $S_{i}(n,t)$ for $i=1,2,\ldots,6,$ separately. For $S_1(n,t)$, we have 
\begin{align*}
	S_1(n,t)=&\sum_{k=1}^{n}\int_{0}^{t} \dot{w}^n_k(\zeta)\int_{(\lfloor\frac{\zeta}{\sigma}\rfloor-1)\sigma}^{\zeta}\bigg\langle\big\{\mu\A\u^n(\xi)+\B\big(\u^n(\xi)\big)+\alpha\u^n(\xi)\nonumber\\&\qquad\qquad\qquad+\beta\mathcal{C}\big(\u^n(\xi)\big)\big\},\D\G_k\big(\u^n\big((\Big\lfloor\frac{\zeta}{\sigma}\Big\rfloor-1)\sigma\big)\big)^{*}[\u^n(\zeta)-\u(\zeta)]\bigg\rangle\d\xi\d\zeta.
\end{align*}
Using \eqref{MP3},  condition (H$'$.1) of Hypothesis \ref{HonG1}, Lemmas \ref{Holder} and \ref{Young}, and Remark \ref{RemarkB}, we obtain
\begin{align*}
	&\mathbb{E}\left[\sup_{t\in[0,\tau_{n}]}\left|S_1(n,t)\right|\right]\nonumber\\&\leq C(M)\sum_{k=1}^{n}\mathbb{E}\bigg[\int_{0}^{\tau_n}\left|\dot{w}^n_k(\zeta)\right|\int_{(\lfloor\frac{\zeta}{\sigma}\rfloor-1)\sigma}^{\zeta}\bigg\{\big(\|\u^n(\xi)\|_{\V}+\|\B(\u^n(\xi))\|_{\V^{'}}\big)\|\u^n(\zeta)-\u(\zeta)\|_{\V}\nonumber\\&\qquad\qquad+\|\u^n(\xi)\|^{r}_{\wi\L^{r+1}}\|\u^n(\zeta)-\u(\zeta)\|_{\wi\L^{r+1}}\bigg\}\d\xi\d\zeta\bigg]\nonumber\\&\leq C(M)n^{\frac{3}{2}}2^{\frac{n}{2}}\cdot\mathbb{E}\bigg[\int_{0}^{\tau_{n}}\bigg\{\int_{(\lfloor\frac{\zeta}{\sigma}\rfloor-1)\sigma}^{\zeta}\bigg(\|\u^n(\xi)\|^{2}_{\V}+\|\u^n(\xi)\|^{r+1}_{\wi\L^{r+1}}\bigg)\d\xi\bigg\}^{\frac{1}{2}}\nonumber\\&\qquad\qquad\qquad\times\bigg\{\int_{(\lfloor\frac{\zeta}{\sigma}\rfloor-1)\sigma}^{\zeta}\|\u^n(\zeta)-\u(\zeta)\|^{2}_{\V}\d\xi\bigg\}^{\frac{1}{2}}\d\zeta+\int_{0}^{\tau_{n}}\bigg\{\int_{(\lfloor\frac{\zeta}{\sigma}\rfloor-1)\sigma}^{\zeta}\|\u^n(\xi)\|^{r+1}_{\wi\L^{r+1}}\d\xi\bigg\}^{\frac{r}{r+1}}\nonumber\\&\qquad\qquad\qquad\times\bigg\{\int_{(\lfloor\frac{\zeta}{\sigma}\rfloor-1)\sigma}^{\zeta}\|\u^n(\zeta)-\u(\zeta)\|^{r+1}_{\wi\L^{r+1}}\d\xi\bigg\}^{\frac{1}{r+1}}\d\zeta\bigg]\nonumber\\&\leq C(M)n^{\frac{3}{2}}2^{\frac{n}{2}}\sigma^{\frac{1}{2}}\bigg(\mathbb{E}\bigg[\int_{0}^{\tau_{n}}\int_{(\lfloor\frac{\zeta}{\sigma}\rfloor-1)\sigma}^{\zeta}\bigg\{\|\u^n(\xi)\|^{2}_{\V}+\|\u^n(\xi)\|^{r+1}_{\wi\L^{r+1}}\bigg\}\d\xi\d\zeta\bigg]\bigg)^{\frac{1}{2}}\nonumber\\&\quad+C(M)n^{\frac{3}{2}}2^{\frac{n}{2}}\sigma^{\frac{1}{r+1}}\bigg(\mathbb{E}\bigg[\int_{0}^{\tau_{n}}\int_{(\lfloor\frac{\zeta}{\sigma}\rfloor-1)\sigma}^{\zeta}\|\u^n(\xi)\|^{r+1}_{\wi\L^{r+1}}\d\xi\d\zeta\bigg]\bigg)^{\frac{r}{r+1}}.
\end{align*}
Applying stochastic Fubini's Theorem and \eqref{E_U2}, we obtain 
\begin{align}\label{MP30}
	&\mathbb{E}\left[\sup_{t\in[0,\tau_{n}]}\left|S_1(n,t)\right|\right]\leq C(M)n^{\frac{3}{2}}2^{\frac{n}{2}}\sigma= C(T,M)n^{\frac{3}{2}}2^{-\frac{n}{2}}.
\end{align}
For $S_{i}(n,t)$ for $i=2,3,\ldots,6,$ we refer the readers to the work \cite{TMRZ} (see proof of (2.19) in section 2 of \cite{TMRZ}). Since, the calculations are same, therefore we are not repeating here. But for the completeness, we provide bounds for each $S_{i}(n,t)$ for $i=2,3,\ldots,6$.
 \begin{equation}\label{MP31}
	\left\{
	\begin{aligned}
	\mathbb{E}\left[\sup_{t\in[0,\tau_{n}]}\left|S_2(n,t)\right|\right]\leq C(T,M)n^{3}2^{-\frac{3n}{8}},\\ 
		\mathbb{E}\left[\sup_{t\in[0,\tau_{n}]}\left|S_3(n,t)\right|\right]\leq C(T,M)n^{3}2^{-\frac{3n}{8}},\\
			\mathbb{E}\left[\sup_{t\in[0,\tau_{n}]}\left|S_4(n,t)\right|\right]\leq C(T,M)n^{2}2^{-\frac{3n}{8}},\\ 
				\mathbb{E}\left[\sup_{t\in[0,\tau_{n}]}\left|S_5(n,t)\right|\right]\leq C(T,M)n^{\frac{3}{2}}2^{-\frac{n}{2}},\\
					\mathbb{E}\left[\sup_{t\in[0,\tau_{n}]}\left|S_6(n,t)\right|\right]\leq C(T,M)n^{\frac{3}{2}}2^{-\frac{n}{4}}.
 \end{aligned}
\right.
\end{equation}
Combining \eqref{MP29}-\eqref{MP31} and passing limit $n\to\infty$, we obtain the convergence claimed in \eqref{MP24}.
\vskip2mm
\noindent
\textbf{Step 4:} Taking supremum and expectation, respectively, both sides of \eqref{MP5}, and using \eqref{MP16}-\eqref{MP19} and \eqref{MP22}-\eqref{MP24} in final estimate, we obtain
\begin{align}\label{MP34}
	&\mathbb{E}\left[\frac{1}{2}\sup_{\zeta\in[0,\tau_{n}]}\|\u^n(\zeta)-\u(\zeta)\|^2_{\H}+a_1\int_{0}^{\tau_{n}}\|\u^n(\zeta)-\u(\zeta)\|^2_{\V}\d\zeta+a_2\int_{0}^{\tau_{n}}\|\u^n(\zeta)-\u(\zeta)\|^{r+1}_{\wi\L^{r+1}}\d\zeta\right]\nonumber\\&\leq I_7(n)+I_8(n)+\mathbb{E}\left[C\int_{0}^{\tau_{n}}\big(\Phi(\u(\zeta))+\rho(\u(\zeta))\big)\|\u^n(\zeta)-\u(\zeta)\|^2_{\H}\d\zeta\right],
\end{align}
where, for $\theta\in(0,1]$,
\begin{align}\label{MP35}
	a_1=\begin{cases}
		\mu  &  \text{ for } d=2 \text{ and } r=3,\\2\mu(1-\theta)& \text{ for } d=r=3 ,\\ \mu& \text{ for } d=2,3 \text{ and } r>3,
	\end{cases}\ 
a_2=\begin{cases}
	\frac{\beta}{2^{r-2}} & \text{ for } d=2 \text{ and } r=3,\\ \frac{1}{2^{r-2}}\left(\beta-\frac{1}{2\theta\mu}\right) & \text{ for } d=r=3 ,\\ \frac{\beta}{2^{r-1}}& \text{ for } d=2,3 \text{ and } r>3,
\end{cases}
\end{align}
and
\begin{align}\label{MP36}
	\Phi(\u(\zeta))=C\begin{cases}
		 \|\u(\zeta)\|^2_{\H}\|\u(\zeta)\|^2_{\V} &\ \ \text{ for } d=2 \text{ and } r=3,\\ 0 &\ \ \text{ for } d=r=3 ,\\ 1&\ \ \text{ for } d=2,3 \text{ and } r>3.
	\end{cases}
\end{align}
Now, set $\mathfrak{X}(t):=\sup_{\zeta\in[0,t]}\|\u^n(\zeta)-\u(\zeta)\|^2_{\H}$ and $\mathfrak{Z}(t):=C\int_{0}^{t}\big(\Phi(\u(\zeta))+\rho(\u(\zeta))\big)\d\zeta,$ for $t\in[0,T]$. Then $\mathfrak{X}$ and $\mathfrak{Z}$ are adapted, non-negative and continuous. From \eqref{Tau1}, condition (H.1) of Hypothesis \ref{HonG} and \eqref{MP36}, we obtain that there exists a constant $C'(M)$ such that $\mathfrak{Z}(t)\leq C'(M)$ uniformly for $t\in[0,\tau_{n}]$. From \eqref{MP34}, we have
\begin{align}\label{MP37}
	\mathbb{E}\bigg[\mathfrak{X}(\tau_{n})\bigg]\leq I_7(n)+I_8(n)+\mathbb{E}\left[\int_{0}^{\tau_{n}}\mathfrak{X}(s)\d\mathfrak{Z}(s)\right].
\end{align}
Applying Lemma 2 from \cite{GM} to \eqref{MP37}, we achieve
\begin{align}\label{MP38}
	\mathbb{E}\left[\int_{0}^{\tau_{n}}\mathfrak{X}(s)\d\mathfrak{Z}(s)\right]\leq \left\{I_7(n)+I_8(n)\right\}e^{C'(M)}\int_{0}^{C'(M)}e^{-y}\d y\to 0 \text{ as } n\to \infty,
\end{align}
where we have used the convergences obtained in \eqref{MP21} and \eqref{MP24}. Hence, \eqref{MP37} together with \eqref{MP21}, \eqref{MP24} and \eqref{MP38} provide  that
\begin{align*}
	\lim_{n\to\infty}\mathbb{E}\left[\sup_{\zeta\in[0,\tau_{n}]}\|\u^n(\zeta)-\u(\zeta)\|^2_{\H}\right]=0,
\end{align*}
as required.
\end{proof}
\section{Support of solutions of stochastic CBF equations}\label{sec4}\setcounter{equation}{0}
This section is devoted to establish an application of Wong-Zakai approximation, that is, the support of solutions of stochastic CBF equation \eqref{SCBF}. Assume that $T>0$ and $\W$ be a cylindrical process discussed in subsection \ref{AF}. For $n\in\N$, $t\in[0,T],$ define
\begin{align*}
	M^n(t):= \exp\left(\int_{0}^{t}\dot{\W}^n(s)\d\W(s)-\frac{1}{2}\int_{0}^{t}\|\dot{\W}^n(s)\|^2_{\mathrm{K}}\d s\right),
\end{align*}
where $\dot{\W}^n$ is defined in \eqref{AS1} and 
\begin{align}\label{AS3}
	\widehat{\W}^n(t):= \W(t)-\int_{0}^{t}\dot{\W}^n(s)\d s.
\end{align}
Since the real valued random variables $\dot{w}_k(j\sigma)$, $j,k\in\N$ are independent and for each $j,k\in\N$, $\sigma^{\frac{1}{2}}\dot{w}_k(j\sigma)$ is standard normal. Therefore for all $n\in\N$
\begin{align*}
	\sup_{t\in[0,T]}\mathbb{E}\left[e^{\lambda\|\dot{\W}^n(t)\|^2_{\mathrm{K}}}\right]=\sup_{t\in[0,T]}\prod_{1\leq k\leq n}\mathbb{E}\left[e^{\lambda\left|\dot{w}_k^n(t)\right|^2}\right]=\left(\mathbb{E}\left[e^{\frac{\lambda}{\sigma}\left|Z\right|^2}\right]\right)^n<\infty
\end{align*} 
is satisfied for some standard normal random variable $Z$ and $\lambda>0$ sufficiently small (Fernique's theorem). Consequently, by Girsanov's theorem (Theorem 10.14 and Proposition 10.17, \cite{DZ1}), the process $\{\widehat{\W}^n(t)\}_{t\in[0,T]}$ given by \eqref{AS3} is a cylindrical Wiener process under $\mathbb{P}^n$ with the measure $\mathbb{P}^n\ll\mathbb{P}$ satisfying 
\begin{align*}
	\frac{\d\mathbb{P}^n}{\d\mathbb{P}}\bigg|_{\mathscr{F}_t}=M^n(t), \text{ for } t\in[0,T].
\end{align*}
Analogously, for arbitrary $\k\in\mathrm{L}^2(0,T;\mathrm{K})$, we define the process
\begin{align*}
	M^n_{\k}:=\exp\left(-\int_{0}^{t}\k(s)\d\widehat{\W}^n(s)-\frac{1}{2}\int_{0}^{t}\|\k(s)\|^2_{\mathrm{K}}\d s\right),
\end{align*}
and
\begin{align}\label{AS4}
	\widehat{\W}^n_{\k}(t):=\widehat{\W}^n(t)+\int_{0}^{t}\k(s)\d s, \text{ for } t\in[0,T], n\in\N.
\end{align}
Again, in view of Girsanov's theorem, we get another probability measure $\mathbb{P}^n_{\k}\ll\mathbb{P}^n\ll\mathbb{P}$ such that 
\begin{align}\label{AS5}
	\frac{\d\mathbb{P}^n_{\k}}{\d\mathbb{P}^n}\bigg|_{\mathscr{F}_t}=M^n_{\k}(t), \text{ for } t\in[0,T],
\end{align}
where $\widehat{\W}^n_{\k}$ is a cylindrical Wiener process under $\mathbb{P}^n_{\k}$. 

Now, for fixed $T>0$, consider the system
	\begin{equation}\label{C_SCBF}
	\left\{
	\begin{aligned}
	\d\mathrm{U}^n_{\k}(t)&=-\left[\mu\A\mathrm{U}^n_{\k}(t)+\B(\mathrm{U}^n_{\k}(t))+\alpha\mathrm{U}^n_{\k}(t)+\mathcal{C}(\mathrm{U}^n_{\k}(t))\right]\d t + \F_1(\mathrm{U}^n_{\k}(t))\d\W(t) \\&\quad+\F_2(\mathrm{U}^n_{\k}(t))\dot{\W}^n(t)\d t+\F_3(\mathrm{U}^n_{\k}(t))\k(t)\d t- \F(\mathrm{U}^n_{\k}(t))\d t,\\
		\mathrm{U}^n_{\k}(0)&=\x,
	\end{aligned}
	\right.
\end{equation}
where $\F_1,\F_2,\F_3:\H\times\Omega\to(\mathcal{L}_2(\mathrm{K};\H),\|\cdot\|_{\mathcal{L}_2})$ and $\F:\H\times\Omega\to\H$ are progressively measurable.
 Note that, the system \eqref{WZ_SCBF} is a spacial case of system \eqref{C_SCBF} with $\F_1=0$, $\F_2=\G$, $\F_3=0$ and $\F=\frac{1}{2}\widetilde{\Tr}_n$. Hence, the solvability of system \eqref{WZ_SCBF}  is an immediate consequence of the solvability of the system \eqref{C_SCBF}. Let us state the result for existence of a unique solution for  the system \eqref{C_SCBF} which is proved in Section \ref{sec5} (see below).
 
 \begin{theorem}\label{4.1}
 	Let $T>0$, $\k\in\mathrm{L}^2(0,T;\mathrm{K})$, $p>\gamma+2$ and $\x\in\mathrm{L}^{p}(\Omega,\mathscr{F},\mathbb{P};\H)$, where $\gamma$ as in Hypothesis \ref{HonG}. Assume that the operators $\F_1,\F_2$ and $\F_3$ satisfy the conditions of Hypothesis \ref{HonG} and $\F$ satisfies the condition \emph{(H$'$.2)} of Hypothesis \ref{HonG1}. Then there exists a unique solution $\mathrm{U}^n_{\k}$  to equation \eqref{C_SCBF} with the initial condition $\x$. Furthermore, $\mathrm{U}^n_{\k}\in\mathrm{L}^{p}(\Omega; \mathrm{L}^{\infty}(0,T;\H))\cap\mathrm{L}^2(\Omega; \mathrm{L}^2(0,T;\V))\cap\mathrm{L}^{r+1}(\Omega;\mathrm{L}^{r+1}(0,T;\widetilde{\L}^{r+1}))$, that is,
 	\begin{align}\label{E_U6}
 		\sup_{n\geq1}\mathbb{E}\left[\sup_{t\in[0,T]}\|\mathrm{U}^n_{\k}(t)\|^{p}_{\H}+\int_{0}^{T}\|\mathrm{U}^n_{\k}(t)\|^2_{\V}\d t+\int_{0}^{T}\|\mathrm{U}^n_{\k}(t)\|^{r+1}_{\wi\L^{r+1}}\d t\right]<\infty,
 	\end{align} 
 with a modification having paths in $\C([0,T];\H)\cap\mathrm{L}^2(0,T;\V)\cap\mathrm{L}^{r+1}(0,T;\wi\L^{r+1}),$ $\mathbb{P}$-a.s.
 \end{theorem}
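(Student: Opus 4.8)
The plan is to construct a solution by a Faedo--Galerkin scheme and to identify the limit through the local monotonicity of Theorem \ref{LocMon} together with the local monotonicity of the coefficients granted by Hypotheses \ref{HonG} and \ref{HonG1}; uniqueness will follow from the same monotonicity estimate. Throughout, for a fixed $n$ I regard $\F_2(\cdot)\dot{\W}^n\d t$ and $\F_3(\cdot)\k\d t$ as random, time--dependent drift perturbations, $\F_1(\cdot)\d\W$ as the genuine stochastic integral, and $\F(\cdot)\d t$ as a correction controlled by (H$'$.2).

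First I would fix the orthonormal basis of $\H$ formed by the eigenfunctions of $\A$, let $\mathrm{P}_m$ be the orthogonal projection onto the span of the first $m$ of them, and solve the finite--dimensional It\^o system
\begin{align*}
	\d\mathrm{U}^m(t)&=-\mathrm{P}_m\left[\mu\A\mathrm{U}^m(t)+\B(\mathrm{U}^m(t))+\alpha\mathrm{U}^m(t)+\mathcal{C}(\mathrm{U}^m(t))\right]\d t+\mathrm{P}_m\F_1(\mathrm{U}^m(t))\d\W(t)\\
	&\quad+\mathrm{P}_m\F_2(\mathrm{U}^m(t))\dot{\W}^n(t)\d t+\mathrm{P}_m\F_3(\mathrm{U}^m(t))\k(t)\d t-\mathrm{P}_m\F(\mathrm{U}^m(t))\d t,
\end{align*}
with $\mathrm{U}^m(0)=\mathrm{P}_m\x$. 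On $\mathrm{P}_m\H$, where all norms are equivalent, (H.2) and (H$'$.2) render the coefficients locally Lipschitz, and together with the smoothness of $\B$ and $\mathcal{C}$ this gives a unique local solution, which the a priori bound below shows to be global on $[0,T]$.

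Second, applying It\^o's formula to $\|\mathrm{U}^m(t)\|_\H^p$ and using the cancellation $\langle\B(\v),\v\rangle=0$, the coercivity $\langle\A\v,\v\rangle=\|\v\|_\V^2$, the positivity $\langle\mathcal{C}(\v),\v\rangle=\|\v\|_{\wi\L^{r+1}}^{r+1}$, the growth bound (H.1) on $\F_1,\F_2,\F_3$ and (H$'$.2) on $\F$, the Burkholder--Davis--Gundy inequality for the $\F_1\d\W$ term, and Lemmas \ref{Holder}--\ref{Young}, I would derive the uniform estimate \eqref{E_U6} and hence extract, along a subsequence, a weak-$*$ limit $\mathrm{U}$ in $\mathrm{L}^p(\Omega;\mathrm{L}^\infty(0,T;\H))$ and weak limits in $\mathrm{L}^2(\Omega;\mathrm{L}^2(0,T;\V))$ and $\mathrm{L}^{r+1}(\Omega;\mathrm{L}^{r+1}(0,T;\wi\L^{r+1}))$, together with weak limits of $\B(\mathrm{U}^m)$, $\mathcal{C}(\mathrm{U}^m)$ and the $\F_i(\mathrm{U}^m)$ to abstract elements. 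The delicate point in this step is the drift $(\F_2(\mathrm{U}^m)\dot{\W}^n,\mathrm{U}^m)$: because $\|\dot{\W}^n\|_\mathrm{K}$ grows with $n$, one absorbs it using the exponential moment bound for $\dot{\W}^n$ recorded in Section \ref{sec4}, which keeps the Gronwall constant finite for each fixed $n$.

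Third, passing to the limit in the Galerkin system identifies $\mathrm{U}$ as a solution of an equation featuring the abstract limits in place of the nonlinearities, and the main obstacle is to show these limits equal $\B(\mathrm{U})$, $\mathcal{C}(\mathrm{U})$ and $\F_i(\mathrm{U})$. Since $\B$ and $\mathcal{C}$ are not weakly continuous, I would invoke the Minty--Browder trick in its locally monotone form: introducing the weight $\psi(t)=\exp\left(-\int_0^t\Theta(\v(s))\d s\right)$, where $\Theta$ collects the monotonicity functions of Theorem \ref{LocMon} (namely $\tfrac{27}{32\mu^3}\|\cdot\|_{\wi\L^4}^4$, the constant $\eta$, or $0$ according to the case) and the functions $\rho$ from (H.2) and (H$'$.2), applying It\^o's product rule to $\psi\|\mathrm{U}^m\|_\H^2$, taking $\liminf$ in $m$, and comparing with an arbitrary test process $\v$, the local monotonicity forces the required identification. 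Uniqueness then follows by applying It\^o's formula to $\psi(t)\|\mathrm{U}^{(1)}(t)-\mathrm{U}^{(2)}(t)\|_\H^2$ for two solutions: the local--monotonicity inequalities render the drift contribution nonpositive and Gronwall's inequality yields $\mathrm{U}^{(1)}=\mathrm{U}^{(2)}$. The hardest part throughout is controlling the $n$--dependent drift $\F_2\dot{\W}^n$ in the monotonicity and Gronwall steps while simultaneously accommodating the super--linear term $\mathcal{C}$, which forces every estimate to be performed in the intersection space $\V\cap\wi\L^{r+1}$ rather than in $\V$ alone.
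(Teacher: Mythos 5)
Your overall skeleton (Galerkin approximation in $\H_m$, $p$-th moment It\^o estimates, weak limits, Minty--Browder identification under an exponential weight, weighted-energy uniqueness) coincides with the paper's strategy, but there is a genuine gap in your second step: the treatment of the drift $\F_2(\mathrm{U}^m)\dot{\W}^n$. You bound it by $C(1+\|\mathrm{U}^m\|^p_{\H})(1+\|\dot{\W}^n\|^2_{\mathrm{K}})$ and propose to absorb the factor through Gronwall ``using the exponential moment bound for $\dot{\W}^n$ recorded in Section \ref{sec4}.'' This fails. Gronwall produces the weight $\exp\big(\int_0^T\|\dot{\W}^n(t)\|^2_{\mathrm{K}}\d t\big)$, and $\int_0^T\|\dot{\W}^n(t)\|^2_{\mathrm{K}}\d t$ is equal in law to a chi-squared variable with $n2^n$ degrees of freedom (on each of the $2^n$ blocks, $\sigma\|\dot{\W}^n(j\sigma)\|^2_{\mathrm{K}}\sim\chi^2_n$, independently), whose exponential moment at parameter $1$ is infinite; Fernique only yields $\mathbb{E}[e^{\lambda\|\dot{\W}^n(t)\|^2_{\mathrm{K}}}]<\infty$ for $\lambda$ small, of order $\sigma$. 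So your route does not even give $\mathbb{E}[\sup_{t\in[0,T]}\|\mathrm{U}^m(t)\|^p_{\H}]<\infty$ for fixed $n$, let alone \eqref{E_U6}, which is asserted \emph{uniformly} in $n$ (note the $\sup_{n\geq1}$): a ``Gronwall constant finite for each fixed $n$'' would not prove the theorem as stated, and the uniformity is precisely what is used downstream, e.g.\ in \eqref{E_U5} and in making $\mathbb{P}(\tau^{(2)}_{n,M}=T)\to1$ uniformly in $n$.

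The missing idea, which is the crux of the paper's Lemma \ref{5.1}, is that $\dot{\W}^n$ is a rescaled increment of $\W$ over the \emph{previous} subinterval: $\dot{\W}^n(\zeta)=\frac{1}{\sigma}\int_{(\lfloor\zeta/\sigma\rfloor-1)\sigma}^{\lfloor\zeta/\sigma\rfloor\sigma}\Pi_n\d\W(\xi)$. By Fubini, the dangerous term (the paper's $Q_2$) is rewritten as a genuine stochastic integral against $\d\W(\xi)$ and estimated by the BDG inequality; the normalizations cancel (the factor $\frac{p^2}{\sigma}$ against the block length $\sigma$), giving \eqref{S6}, an estimate with no $n$-dependence whatsoever, so that the final Gronwall weight involves only $\int_0^T(1+\|\k(t)\|^2_{\mathrm{K}})\d t$ as in \eqref{5.5}. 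With this repaired, your remaining steps are essentially the paper's, with two smaller corrections: the Minty--Browder and uniqueness weight must include $\|\dot{\W}^n\|^2_{\mathrm{K}}+\|\k\|^2_{\mathrm{K}}$ in addition to $\Phi$ and $\rho$ (via \eqref{S2}--\eqref{S3}) --- harmless there, since the weight is bounded by one and sits \emph{inside} the expectation rather than as a Gronwall factor --- and (H.2)/(H$'$.2) are monotonicity, not local Lipschitz, conditions, so the finite-dimensional solvability should be quoted from the locally monotone SDE theory (Theorem 3.1.1 of \cite{LR}), as the paper does, rather than from a Lipschitz argument.
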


 For $\k\in\mathrm{L}^2(0,T;\mathrm{K})$, let us consider two systems which can be seen as two spacial cases of the system \eqref{C_SCBF},
	\begin{equation}\label{C_SCBF1}
	\left\{
	\begin{aligned}
		\d\mathrm{Y}_{\k}(t)&=-\left\{\mu\A\mathrm{Y}_{\k}(t)+\B(\mathrm{Y}_{\k}(t))+\alpha\mathrm{Y}_{\k}(t)+\mathcal{C}(\mathrm{Y}_{\k}(t))\right\}\d t +\G(\mathrm{Y}_{\k}(t))\k(t)\d t\\&\quad- \frac{1}{2}\widetilde{\Tr}(\mathrm{Y}_{\k}(t))\d t,\\
		\mathrm{Y}_{\k}(0)&=\x,
	\end{aligned}
	\right.
\end{equation}
 and
 	\begin{equation}\label{C_SCBF2}
 	\left\{
 	\begin{aligned}
 		\d\mathrm{Y}^n_{\k}(t)&=-\left\{\mu\A\mathrm{Y}^n_{\k}(t)+\B(\mathrm{Y}^n_{\k}(t))+\alpha\mathrm{Y}^n_{\k}(t)+\mathcal{C}(\mathrm{Y}^n_{\k}(t))\right\}\d t + \G(\mathrm{Y}^n_{\k}(t))\d\W(t) \\&\quad-\G(\mathrm{Y}^n_{\k}(t))\dot{\W}^n(t)\d t+\G(\mathrm{Y}^n_{\k}(t))\k(t)\d t,\\
 		\mathrm{Y}^n_{\k}(0)&=\x,
 	\end{aligned}
 	\right.
 \end{equation}
  where the operators $\A$, $\B$, $\mathcal{C}$, $\G$ and $\widetilde{\Tr}$ are defined in section \ref{sec2}. Furthermore, the existence of unique solutions $\mathrm{Y}_{\k}$ and $\mathrm{Y}^n_{\k}$ of the systems \eqref{C_SCBF1} and \eqref{C_SCBF2}, respectively, are confirmed from Theorem \ref{4.1}. Moreover, we have $\mathrm{Y}_{\k}$, $\mathrm{Y}^n_{\k}\in\mathrm{C}([0,T];\H))\cap\mathrm{L}^2(0,T;\V)\cap\mathrm{L}^{r+1}(0,T;\widetilde{\L}^{r+1})$, $\mathbb{P}$-a.s. and
  \begin{align}\label{E_U7}
  	\mathbb{E}\left[\sup_{t\in[0,T]}\|\mathrm{Y}_{\k}(t)\|^{p}_{\H}+\int_{0}^{T}\|\mathrm{Y}_{\k}(t)\|^2_{\V}\d t+\int_{0}^{T}\|\mathrm{Y}_{\k}(t)\|^{r+1}_{\wi\L^{r+1}}\d t\right]<\infty,
  \end{align} 
and
\begin{align}\label{E_U8}
	\sup_{n\geq1}\mathbb{E}\left[\sup_{t\in[0,T]}\|\mathrm{Y}^n_{\k}(t)\|^{p}_{\H}+\int_{0}^{T}\|\mathrm{Y}^n_{\k}(t)\|^2_{\V}\d t+\int_{0}^{T}\|\mathrm{Y}^n_{\k}(t)\|^{r+1}_{\wi\L^{r+1}}\d t\right]<\infty.
\end{align} 
The following Lemma demonstrates the Wong-Zakai approximation results for the systems \eqref{C_SCBF1} and \eqref{C_SCBF2}.
\begin{lemma}\label{STWZ}
		Assume that all the conditions of Hypotheses \ref{HonG} and \ref{HonG1} are satisfied, $p>\gamma+2$ and $\x\in \mathrm{L}^{p}(\Omega, \mathscr{F}_0,\mathbb{P};\H)$. For $\k\in\mathrm{L}^2(0,T;\mathrm{K})$, let $\mathrm{Y}_{\k}$ and $\mathrm{Y}^n_{\k}$ be the solutions to the systems \eqref{C_SCBF1} and \eqref{C_SCBF2}, respectively, with same initial data $\x$. Then
	\begin{align}\label{STWZ1}
		\lim_{n\to\infty}\mathbb{E}\bigg[\sup_{t\in[0,T]}\|\mathrm{Y}^n_{\k}(t)-\mathrm{Y}_{\k}(t)\|^2_{\H}\bigg]=0.
	\end{align}
\end{lemma}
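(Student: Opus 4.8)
The plan is to follow the four-step scheme of the proof of Theorem~\ref{Main} almost line by line, the only structurally new ingredients being the control drift $\G(\cdot)\k$, which occurs in both \eqref{C_SCBF1} and \eqref{C_SCBF2}, and the appearance of the full trace $\widetilde{\Tr}$ in place of the truncated $\widetilde{\Tr}_n$. First I would introduce stopping times $\tau^{(1)}_{M},\tau^{(2)}_{n,M},\tau^{(3)}_{n}$ and $\tau_{n,M}$ exactly as in \eqref{Tau1}, now built from the uniform moment bounds \eqref{E_U7}--\eqref{E_U8} and \eqref{W1}--\eqref{W2}, so that on $[0,\tau_{n,M}]$ both $\mathrm{Y}^n_{\k},\mathrm{Y}_{\k}$ and the increments $\dot{w}^n_k$ satisfy the analogue of \eqref{MP3}; the localizing limits established below \eqref{Tau1} continue to hold. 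Exactly as in \eqref{MP2}, H\"older's inequality together with the $p$-th moment bounds in \eqref{E_U7}--\eqref{E_U8} reduces \eqref{STWZ1} to showing $\lim_{n\to\infty}\E\big[\sup_{t\in[0,\tau_{n,M}]}\|\mathrm{Y}^n_{\k}(t)-\mathrm{Y}_{\k}(t)\|^2_{\H}\big]=0$ for $M$ sufficiently large.

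Next I would apply It\^o's formula to $\|\mathrm{Y}^n_{\k}(\cdot)-\mathrm{Y}_{\k}(\cdot)\|^2_{\H}$ and, writing $\mathrm{Z}^n:=\mathrm{Y}^n_{\k}-\mathrm{Y}_{\k}$, use the identity in the Remark preceding Theorem~\ref{Main} to recast the shifted part of $\G(\mathrm{Y}^n_{\k})\dot{\W}^n$ as a stochastic integral, obtaining a decomposition of the same shape as \eqref{MP5}: the differences $-2\int_0^t\langle\B(\mathrm{Y}^n_{\k})-\B(\mathrm{Y}_{\k}),\mathrm{Z}^n\rangle\d\zeta$ and $-2\int_0^t\langle\mathcal{C}(\mathrm{Y}^n_{\k})-\mathcal{C}(\mathrm{Y}_{\k}),\mathrm{Z}^n\rangle\d\zeta$, the noise terms $J_3,J_4,J_5$, and the new control term $2\int_0^t(\mathrm{Z}^n,[\G(\mathrm{Y}^n_{\k})-\G(\mathrm{Y}_{\k})]\k)\d\zeta$. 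The two nonlinear differences are estimated verbatim as $J_1,J_2$ in \eqref{MP16}--\eqref{MP19} via \eqref{441}, the monotonicity \eqref{MO_c}, the inequality \eqref{a215} and Theorem~\ref{LocMon}. The control term is the easy new piece: using condition (H.2) of Hypothesis~\ref{HonG} and $\|[\G(\mathrm{Y}^n_{\k})-\G(\mathrm{Y}_{\k})]\k\|_{\H}\le\|\G(\mathrm{Y}^n_{\k})-\G(\mathrm{Y}_{\k})\|_{\mathcal{L}_2}\|\k\|_{\mathrm{K}}$, it is dominated by $2\int_0^t\rho(\mathrm{Y}_{\k})^{1/2}\|\k(\zeta)\|_{\mathrm{K}}\|\mathrm{Z}^n(\zeta)\|^2_{\H}\d\zeta$; since $\rho(\mathrm{Y}_{\k})\le C(M)$ on $[0,\tau_{n,M}]$ and $\k\in\mathrm{L}^2(0,T;\mathrm{K})$, the nondecreasing weight $\int_0^{\cdot}\rho(\mathrm{Y}_{\k})^{1/2}\|\k\|_{\mathrm{K}}\d\zeta$ is finite and is absorbed into the Gronwall measure $\mathfrak{Z}$ of the last step.

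The noise terms are handled by the same machinery as in Steps~2--3 of Theorem~\ref{Main}; indeed the noise-and-correction part $\G(\mathrm{Y}^n_{\k})\d\W-\G(\mathrm{Y}^n_{\k})\dot{\W}^n\d t+\frac{1}{2}\widetilde{\Tr}(\mathrm{Y}_{\k})\d t$ driving $\mathrm{Z}^n$ is the mirror image of $\G(\u^n)\dot{\W}^n\d t-\frac{1}{2}\widetilde{\Tr}_n(\u^n)\d t-\G(\u)\d\W$ driving $\u^n-\u$. I would first reproduce the time-increment estimates \eqref{MP6}--\eqref{MP7''} for $\mathrm{Y}^n_{\k},\mathrm{Y}_{\k}$ (the extra $\G(\mathrm{Y}^n_{\k})\k\,\d t$ drift contributing only lower-order terms that do not spoil the rate $2^{-\frac{3}{4}n}$), then bound $J_3,J_4$ by the Burkholder--Davis--Gundy inequality together with the increment estimates and the projection convergence in condition (H$'$.1); here both the genuine integral $\G(\mathrm{Y}^n_{\k})\d\W$ and the recast shifted integral are evaluated at $\mathrm{Y}^n_{\k}$, so the associated quadratic-variation term goes to zero directly. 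Finally I would expand $\G_k(\mathrm{Y}^n_{\k})-\G_k(\mathrm{Y}^n_{\k}((\lfloor\cdot/\sigma\rfloor-1)\sigma))$ by the second-order Taylor formula \eqref{MP26}--\eqref{MP27} to obtain the decomposition $S_1,\dots,S_6$ with the bounds \eqref{MP30}--\eqref{MP31}.

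The single genuinely new difficulty, which I expect to be the main obstacle, is that the compensator supplied by \eqref{C_SCBF1} is the full trace $\widetilde{\Tr}(\mathrm{Y}_{\k})$ taken at the limit process, whereas the diagonal ($S_4$-type) term of the Taylor expansion naturally produces $\frac{1}{2}\widetilde{\Tr}_n(\mathrm{Y}^n_{\k})$ taken at the approximating process and truncated at level $n$. I would reconcile them by splitting $\widetilde{\Tr}(\mathrm{Y}_{\k})-\widetilde{\Tr}_n(\mathrm{Y}^n_{\k})=[\widetilde{\Tr}(\mathrm{Y}_{\k})-\widetilde{\Tr}_n(\mathrm{Y}_{\k})]+[\widetilde{\Tr}_n(\mathrm{Y}_{\k})-\widetilde{\Tr}_n(\mathrm{Y}^n_{\k})]$: the first bracket tends to $0$ by the convergence $\widetilde{\Tr}_n\to\widetilde{\Tr}$ guaranteed under Hypothesis~\ref{HonG1}, yielding a vanishing term, while pairing the second bracket with $\mathrm{Z}^n$ is controlled by the monotonicity estimate of condition (H$'$.2), $(\widetilde{\Tr}_n(\mathrm{Y}_{\k})-\widetilde{\Tr}_n(\mathrm{Y}^n_{\k}),\mathrm{Z}^n)\le\rho(\mathrm{Y}_{\k})\|\mathrm{Z}^n\|^2_{\H}$, which again feeds into $\mathfrak{Z}$. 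Collecting all estimates yields the analogue of \eqref{MP34}--\eqref{MP37} with $\mathrm{Y}^n_{\k},\mathrm{Y}_{\k}$ replacing $\u^n,\u$ and with $\mathfrak{Z}$ enlarged by the finite contributions from the control term and the trace reconciliation; the Gronwall-type Lemma~2 of \cite{GM} then delivers \eqref{STWZ1}.
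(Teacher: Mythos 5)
Your proposal is correct and takes essentially the same route as the paper: the paper's own proof of Lemma \ref{STWZ} consists precisely of the reduction you describe, namely that the argument of Theorem \ref{Main} carries over and the only genuinely new term is $\int_0^t\big(\big[\G(\Y^n_{\k}(\zeta))-\G(\Y_{\k}(\zeta))\big]\k(\zeta),\Y^n_{\k}(\zeta)-\Y_{\k}(\zeta)\big)\d\zeta$, which the paper bounds, exactly as you do via (H.2) and Young's inequality, by $\int_0^t\left(\rho(\Y_{\k}(\zeta))+\|\k(\zeta)\|^2_{\mathrm{K}}\right)\|\Y^n_{\k}(\zeta)-\Y_{\k}(\zeta)\|^2_{\H}\d\zeta$ and absorbs into the Gronwall weight using $\k\in\mathrm{L}^2(0,T;\mathrm{K})$. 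Your additional discussion of reconciling $\frac{1}{2}\widetilde{\Tr}(\Y_{\k})$ with $\frac{1}{2}\widetilde{\Tr}_n(\Y^n_{\k})$ goes beyond what the paper records (it subsumes this under ``similar arguments'') and its splitting is the right one; the only detail to add is that the analogue of \eqref{MP27} for \eqref{C_SCBF2} also contains the genuine It\^o integral $\int\G(\Y^n_{\k})\d\W$, whose correlation with $\dot{w}^n_k$ is what converts the $+\frac{1}{2}\widetilde{\Tr}_n(\Y^n_{\k})$ produced by the smoothed diagonal into the net $-\frac{1}{2}\widetilde{\Tr}_n(\Y^n_{\k})$ that your difference $\widetilde{\Tr}(\Y_{\k})-\widetilde{\Tr}_n(\Y^n_{\k})$ implicitly presupposes.
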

\begin{proof}
	The proof of this lemma is similar to the proof of Theorem \ref{Main}. In comparison with \eqref{MP5}, the only term which we need to control is 
	\begin{align*}
&\int_{0}^{t}\big(\big[\G(\Y^n_{\k}(\zeta))-\G(\Y_{\k}(\zeta))\big]\k(\zeta),\Y^n_{\k}(\zeta)-\Y_{\k}(\zeta)\big)\d\zeta\\&\leq\int_{0}^{t}\left(\rho(\Y_{\k}(\zeta))+\|\k(\zeta)\|^2_{\mathrm{K}}\right)\|\Y^n_{\k}(\zeta)-\Y_{\k}(\zeta)\|^2_{\H}\d\zeta.
	\end{align*}
Since $\k\in\mathrm{L}^2(0,T;\mathrm{K})$, applying similar arguments as in the proof of Theorem \ref{Main}, we obtain \eqref{STWZ1}, as desired.
\end{proof}
Let $\mathcal{D}= \mathrm{C}([0,T];\H))$. Define $\mathcal{L}:=\{\Y_{\k}, \ \k\in\mathrm{L}^2(0,T;\mathrm{K})\}$ and observe that $\mathcal{L}\subset\mathcal{D}$. Let us now state and prove the support theorem. 
\begin{theorem}\label{ST}
	Assume that $\x\in \mathrm{L}^{p}(\Omega, \mathscr{F}_0,\mathbb{P};\H)$ with $p>\gamma+2$, where $\gamma$ is the same as in Hypothesis \ref{HonG}, and Hypotheses \ref{HonG} and \ref{HonG1} are satisfied. Let $\u$ represent the solution of the system \eqref{SCBF} with initial data $\x$. Then $$\mathrm{supp}(\mathbb{P}\circ\u^{-1})=\bar{\mathcal{L}},$$ where $\bar{\mathcal{L}}$ is the closure of $\mathcal{L}$ in $\mathcal{D}$ and $\mathrm{supp}(\mathbb{P}\circ\u^{-1})$ represents the support of the distribution $\mathbb{P}\circ\u^{-1}$.
\end{theorem}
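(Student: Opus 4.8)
The plan is to establish the two set inclusions $\bar{\mathcal{L}}\subseteq\mathrm{supp}(\mathbb{P}\circ\u^{-1})$ and $\mathrm{supp}(\mathbb{P}\circ\u^{-1})\subseteq\bar{\mathcal{L}}$ separately, following the Stroock--Varadhan scheme: the first (lower) inclusion via Girsanov's theorem, the second (upper) inclusion via the Wong--Zakai approximation. Throughout I take $\x\in\H$ deterministic, so that each skeleton solution $\Y_{\k}$ of \eqref{C_SCBF1} is a deterministic element of $\mathcal{D}$ and $\mathcal{L}$ is genuinely a subset of $\mathcal{D}$. I recall that $g\in\mathrm{supp}(\mathbb{P}\circ\u^{-1})$ if and only if $\mathbb{P}(\|\u-g\|_{\mathcal{D}}<\varepsilon)>0$ for every $\varepsilon>0$, and that the support is closed; hence it suffices to prove, for the first inclusion, that every $\Y_{\k}$ lies in the support, and for the second, that $\u\in\bar{\mathcal{L}}$ holds $\mathbb{P}$-a.s.

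For the inclusion $\bar{\mathcal{L}}\subseteq\mathrm{supp}(\mathbb{P}\circ\u^{-1})$ I would use the Girsanov apparatus \eqref{AS3}--\eqref{AS5}. The key observation is that, since $\d\widehat{\W}^n_{\k}=\d\W-\dot{\W}^n\d t+\k\d t$, the controlled system \eqref{C_SCBF2} is exactly \eqref{SCBF} with the driving process $\W$ replaced by $\widehat{\W}^n_{\k}$, that is, $\d\Y^n_{\k}=-\M(\Y^n_{\k})\d t+\G(\Y^n_{\k})\d\widehat{\W}^n_{\k}$, where $\M$ is the operator of Theorem \ref{LocMon}. Under $\mathbb{P}^n_{\k}$ the process $\widehat{\W}^n_{\k}$ is a cylindrical Wiener process, so $\Y^n_{\k}$ solves under $\mathbb{P}^n_{\k}$ the same equation that $\u$ solves under $\mathbb{P}$; by the pathwise uniqueness of Proposition \ref{E_U1} together with the Yamada--Watanabe theorem this yields the law identity $\mathbb{P}^n_{\k}\circ(\Y^n_{\k})^{-1}=\mathbb{P}\circ\u^{-1}$. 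Consequently, for fixed $\k$ and $\varepsilon>0$, $\mathbb{P}(\|\u-\Y_{\k}\|_{\mathcal{D}}<\varepsilon)=\mathbb{P}^n_{\k}(\|\Y^n_{\k}-\Y_{\k}\|_{\mathcal{D}}<\varepsilon)$, and since the densities $M^n$ and $M^n_{\k}$ are strictly positive the measures $\mathbb{P}^n_{\k}$ and $\mathbb{P}$ are mutually equivalent, so the right-hand side is positive if and only if $\mathbb{P}(\|\Y^n_{\k}-\Y_{\k}\|_{\mathcal{D}}<\varepsilon)>0$. The latter holds for all large $n$ because, by Lemma \ref{STWZ}, $\Y^n_{\k}\to\Y_{\k}$ in probability, so $\mathbb{P}(\|\Y^n_{\k}-\Y_{\k}\|_{\mathcal{D}}<\varepsilon)\to1$. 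Thus $\Y_{\k}\in\mathrm{supp}(\mathbb{P}\circ\u^{-1})$ for every $\k\in\mathrm{L}^2(0,T;\mathrm{K})$, and taking closures gives the inclusion.

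For the reverse inclusion I would exploit the Wong--Zakai approximation directly. By Theorem \ref{Main}, $\u^n\to\u$ in $\mathrm{L}^2(\Omega;\mathcal{D})$, so along a subsequence $\u^{n_j}\to\u$ in $\mathcal{D}$ for $\mathbb{P}$-a.e. $\omega$. For each $\omega$ the trajectory $\dot{\W}^n(\cdot,\omega)$ belongs to $\mathrm{L}^2(0,T;\mathrm{K})$, so the skeleton solution $\Y_{\dot{\W}^n(\cdot,\omega)}$ obtained by inserting this control into \eqref{C_SCBF1} is an element of $\mathcal{L}$. Comparing \eqref{WZ_SCBF} with \eqref{C_SCBF1} evaluated at $\k=\dot{\W}^n(\cdot,\omega)$, the two equations differ only through the correction terms $\frac{1}{2}\widetilde{\Tr}_n$ and $\frac{1}{2}\widetilde{\Tr}$; since $\widetilde{\Tr}_n\to\widetilde{\Tr}$, a pathwise energy estimate based on the local monotonicity of Theorem \ref{LocMon} and Gronwall's inequality, localized by the stopping times $\tau_{n,M}$ of \eqref{Tau1}, yields $\|\u^n-\Y_{\dot{\W}^n}\|_{\mathcal{D}}\to0$ in probability. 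Hence $\mathrm{dist}_{\mathcal{D}}(\u^n,\mathcal{L})\to0$ in probability, and combining this with $\u^n\to\u$ shows $\mathrm{dist}_{\mathcal{D}}(\u,\bar{\mathcal{L}})=0$, i.e.\ $\u\in\bar{\mathcal{L}}$ $\mathbb{P}$-a.s. This gives $\mathrm{supp}(\mathbb{P}\circ\u^{-1})\subseteq\bar{\mathcal{L}}$ and completes the proof.

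The main obstacle I anticipate is the upper-bound step, namely the comparison $\|\u^n-\Y_{\dot{\W}^n}\|_{\mathcal{D}}\to0$. Here the control $\dot{\W}^n$ grows like $n2^{n/2}$ and the nonlinearities $\B$ and $\mathcal{C}$ are strong, so the Gronwall argument must be carried out on the same localized intervals $[0,\tau_{n,M}]$ as in Theorem \ref{Main}, carefully tracking the decay of $\widetilde{\Tr}_n-\widetilde{\Tr}$ against these growing coefficients; the monotonicity estimates \eqref{fe2_1}--\eqref{fe2_3} are precisely what render the quadratic and $(r+1)$-order terms absorbable. By contrast, once \eqref{C_SCBF2} is recognised as \eqref{SCBF} driven by $\widehat{\W}^n_{\k}$, the lower-bound inclusion is a comparatively routine consequence of Girsanov's theorem, the equivalence of the measures $\mathbb{P}^n_{\k}$ and $\mathbb{P}$, and Lemma \ref{STWZ}.
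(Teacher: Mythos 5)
Your overall architecture (two inclusions, Girsanov for the lower bound, Wong--Zakai for the upper bound) is the paper's, and your lower-bound argument is essentially the paper's proof in streamlined form: you correctly recognise that \eqref{C_SCBF2} is \eqref{SCBF} driven by $\widehat{\W}^n_{\k}$, apply Girsanov and uniqueness in law to get $\mathbb{P}^n_{\k}\circ(\mathrm{Y}^n_{\k})^{-1}=\mathbb{P}\circ\u^{-1}$, and conclude via Lemma \ref{STWZ} and the equivalence of the measures. The paper implements exactly this idea, but on the canonical Wiener space $\mathbb{W}_0^{\bar{\mathrm{K}}}$, through the Yamada--Watanabe measurable solution map $\mathcal{S}_{\mathbb{P}\circ\x^{-1}}$ and the path transformations $T^n_{\k}$, using only $\bar{\mathbb{P}}^{n_0}_{\k}\ll\bar{\mathbb{P}}$. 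That extra formalism is not decoration: the paper's solution concept carries moment conditions that need not survive a change of measure, so pathwise uniqueness is transferred through the solution functional rather than through moment bounds, and the construction also accommodates random initial data $\x\in\mathrm{L}^p(\Omega,\mathscr{F}_0,\mathbb{P};\H)$, which the theorem allows and which your restriction to deterministic $\x$ excludes. These are repairable technical points, not conceptual ones.

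The genuine gap is in your upper inclusion, in the step $\|\u^n-\Y_{\dot{\W}^n}\|_{\mathcal{D}}\to0$ that you yourself flag as the main obstacle. That comparison, done by local monotonicity plus Gronwall on $[0,\tau_{n,M}]$, cannot close: the difference term $\big(\big[\G(\u^n)-\G(\Y_{\dot{\W}^n})\big]\dot{\W}^n,\u^n-\Y_{\dot{\W}^n}\big)$ is estimated exactly as in \eqref{S2} by $\big(\rho(\Y_{\dot{\W}^n})+\|\dot{\W}^n(t)\|^2_{\mathrm{K}}\big)\|\u^n-\Y_{\dot{\W}^n}\|^2_{\H}$, and on $[0,\tau^{(3)}_n]$ one only has $\|\dot{\W}^n(t)\|_{\mathrm{K}}\leq2\delta n2^{n/2}$ by \eqref{MP3}, so the Gronwall factor is of order $e^{Cn^22^n}$; meanwhile no hypothesis quantifies $\widetilde{\Tr}_n-\widetilde{\Tr}$ at all (condition (H$'$.1) gives uniform bounds on the summands $\D\G_k\G_k$ and an $\mathcal{L}_2$-tail condition on $\G$, not a rate), so the forcing term cannot beat the exponential. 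Note that Theorem \ref{Main} survives coefficients of this size only because of the Taylor-expansion compensation in Step~3 (the $S_1,\dots,S_6$ analysis), where each factor $\dot{w}^n_k$ is paired with an integral over an interval of length $\sigma=T2^{-n}$, producing net decay $n^a2^{-bn}$; a crude Cauchy--Schwarz-plus-Gronwall comparison never sees this cancellation. The step is also unnecessary: the paper's proof of the upper inclusion uses the exact identity $\Y_{\dot{\W}^n}=\u^n$, $\mathbb{P}$-a.s., because for $\k=\dot{\W}^n(\cdot,\omega)\in\mathrm{L}^2(0,T;\mathrm{K})$ the skeleton equation \eqref{C_SCBF1} is precisely \eqref{WZ_SCBF} --- the paper uses $\widetilde{\Tr}$ and $\widetilde{\Tr}_n$ interchangeably (compare \eqref{MP5} with \eqref{WZ_SCBF}), so the correction terms coincide and there is nothing to compare. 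With that identity, $\mathrm{supp}(\mathbb{P}\circ\u^{-1})\subseteq\bar{\mathcal{L}}$ follows immediately from Theorem \ref{Main} and $\dot{\W}^n\in\mathrm{L}^2(0,T;\mathrm{K})$ a.s.; you correctly sensed an ambiguity in the correction operators, but the intended reading removes it, whereas your proposed patch replaces a vacuous step by one that fails.
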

\begin{proof}
	Let $\u$ and $\u^n$ be the solutions of the systems \eqref{SCBF} and \eqref{WZ_SCBF}, respectively. Since $\Y_{\k}$ represents the solution of the system \eqref{C_SCBF1}, then, for $\k=\dot{\W}^n$, we have $\Y_{\dot{\W}^n}=\u^n$, $\mathbb{P}$-a.s. Furthermore, it implies from Theorem \ref{Main} that for every $\nu>0$,
\begin{align*}
	\lim_{n\to\infty}\mathbb{P}\left(\|\Y_{\dot{\W}^n}-\u\|_{\mathcal{D}}\geq\nu\right)=	\lim_{n\to\infty}\mathbb{P}\left(\|\u^n-\u\|_{\mathcal{D}}\geq\nu\right)=0.
\end{align*} 
Since $\mathbb{P}$-a.s. $\dot{\W}^n\in\mathrm{L}^2(0,T;\mathrm{K})$, we have
\begin{align}\label{ST1}
	\text{supp}(\mathbb{P}\circ\u^{-1})\subset\bar{\mathcal{L}}.
\end{align}

Conversely, we claim that $\text{supp}(\mathbb{P}\circ\u^{-1})\supset\bar{\mathcal{L}}.$ By Remark 2.5.1 of \cite{LR}, there exists an another Hilbert space $\bar{\mathrm{K}}\supset\mathrm{K}$ such that the embedding from $(\mathrm{K},\langle\cdot,\cdot\rangle_{\mathrm{K}})$ to  $(\bar{\mathrm{K}},\langle\cdot,\cdot\rangle_{\bar{\mathrm{K}}})$ is Hilbert-Schmidt. Additionally, we can find a set $\{e_j, j\in\N\}\subseteq\mathrm{K},$ and $0<\lambda_1\leq\lambda_2\leq\cdots\leq\lambda_j\uparrow\infty$ such that $\{e_j, j\in\N\}$ is an orthonormal basis in $\mathrm{K}$ and $\{\sqrt{\lambda_j}e_j, j\in\N\}$ is an orthonormal basis in $\bar{\mathrm{K}}$. Fix such $\bar{\mathrm{K}}$ and define $\mathbb{W}^{\bar{\mathrm{K}}}:=\mathrm{C}([0,\infty);\bar{\mathrm{K}})$ and $\mathbb{W}_0^{\bar{\mathrm{K}}}:=\{w\in\mathbb{W}^{\bar{\mathrm{K}}}|w(0)=0\}$. Here  $\mathbb{W}_0^{\bar{\mathrm{K}}}$ is equipped with metric
\begin{align*}
	d(w^1,w^2):=\sum_{j=1}^{\infty}\left(\max_{0\leq t\leq j}\|w^1(t)-w^2(t)\|_{\bar{\mathrm{K}}}\land1\right), \ \ w^1,w^2\in\mathbb{W}_0^{\bar{\mathrm{K}}}.
\end{align*} 
Then $\mathbb{W}_0^{\bar{\mathrm{K}}}$ is a complete metrizable space with respect to the metric $d(\cdot,\cdot)$. Let  $\mathscr{B}(\mathbb{W}_0^{\bar{\mathrm{K}}})$ represents its Borel sigma-algebra. It follows that $\W\in\mathbb{W}_0^{\bar{\mathrm{K}}}$ $\mathbb{P}$-a.s. Let $\{\mathscr{B}_t(\mathbb{W}_0^{\bar{\mathrm{K}}})\}_{t\geq0}$ be the normal filtration generated by the canonical process $\omega$. It gives another complete probability space 
\begin{align*}
	\left(\mathbb{W}_0^{\bar{\mathrm{K}}},\cup_{t\geq0}\mathscr{B}_t(\mathbb{W}_0^{\bar{\mathrm{K}}}),\mathscr{B}_t(\mathbb{W}_0^{\bar{\mathrm{K}}}), \bar{\mathbb{P}}\right),
\end{align*}
where $\bar{\mathbb{P}}$ represents the distribution of $\omega$ in $\mathbb{W}^{\bar{\mathrm{K}}}$, that is,
\begin{align}\label{ST2}
	\bar{\mathbb{P}}\circ\omega^{-1}=\mathbb{P}\circ\W^{-1}.
\end{align}
Let $\x$ be $\mathscr{F}_0/\mathscr{B}(\H)$-measurable and $\x\in \mathrm{L}^{p}(\Omega, \mathscr{F}_0,\mathbb{P};\H)$ with $p>\gamma+2$. By Proposition \ref{E_U1} and the Yamada-Watanabe Theorem in \cite{PR} (Theorem E.1.8, \cite{PR}), we can find a measurable map
\begin{align*}
	\mathcal{S}_{\mathbb{P}\circ\x^{-1}}:(\H\times\mathbb{W}_0^{\bar{\mathrm{K}}},\mathscr{B}(\H)\otimes\mathscr{B}(\mathbb{W}_0^{\bar{\mathrm{K}}}))\to (\X,\mathscr{B}(\X)),
\end{align*}
where $\X=\mathrm{C}([0,T];\H))$, such that $\u:=\mathcal{S}_{\mathbb{P}\circ\x^{-1}}(\x,\W)$ is the solution of the system \eqref{SCBF} with the initial data $\u(0)=\x,$ $\mathbb{P}$-a.s. For $\k\in\mathrm{L}^2(0,T;\mathrm{K})$, define maps $T^n_{\k}$ on $(\mathbb{W}_0^{\bar{\mathrm{K}}},\mathscr{B}(\mathbb{W}_0^{\bar{\mathrm{K}}}))$ by
\begin{align*}
	T^n_{\k}(w)=w-\int_0^{\cdot}\dot{w}^n(\zeta)\d\zeta+\int_{0}^{\cdot}\k(\zeta)\d\zeta, \ \ w\in\mathbb{W}_0^{\bar{\mathrm{K}}},
\end{align*}
where
\begin{align*}
	\dot{w}^n(t):=\sum_{j=1}^{n}\Big\langle\frac{\lambda_j}{\sigma}\left[w(\Big\lfloor\frac{t}{\sigma}\Big\rfloor\sigma)-w((\Big\lfloor\frac{t}{\sigma}\Big\rfloor-1)\sigma)\right],e_j\Big\rangle_{\bar{\mathrm{K}}}e_j ,\ \ t\in[0,T].
\end{align*}
We infer from \eqref{AS3}-\eqref{AS5} that $T^n_{\k}$ can be seen as measurable transformations of Wiener space $\mathbb{W}_0^{\bar{\mathrm{K}}}$. Select a $\mathscr{B}(\mathbb{W}_0^{\bar{\mathrm{K}}})/\mathscr{B}(\H)$-measurable map $\x_0:\mathbb{W}_0^{\bar{\mathrm{K}}}\to\H$ such that $\bar{\mathbb{P}}\circ\x_0^{-1}=\mathbb{P}\circ\x^{-1}$. Then $\mathcal{S}_{\bar{\mathbb{P}}\circ\x_0^{-1}}(\x_0(\omega),\omega)$ is also a solution of the system \eqref{SCBF} with the initial data $\x_0$ and noise $\omega$. Since $\x_0$ is $\mathscr{B}(\mathbb{W}_0^{\bar{\mathrm{K}}})/\mathscr{B}(\H)$-measurable, $\x_0(T^n_{\k}(\omega))=\x_0(\omega)$, for all $n$. Again, from Yamada-Watanabe theorem, pathwise uniqueness implies that for every $\nu>0$, $n\in\N$
\begin{align}\label{ST3}
	\bar{\mathbb{P}}\left(\omega:\|\mathcal{S}_{\bar{\mathbb{P}}\circ\x_0^{-1}}(\x_0(\omega),T^n_{\k}(\omega))-\Y_{\k}\|_{\mathcal{D}}\geq\nu\right)=\mathbb{P}(\|\Y^n_{\k}-\Y_{\k}\|_{\mathcal{D}}\geq\nu),
\end{align}
which implies from Lemma \ref{STWZ} that for $\nu$ in \eqref{ST3}
\begin{align}\label{ST4}
	\lim_{n\to\infty}\mathbb{P}(\|\Y^n_{\k}-\Y_{\k}\|_{\D}\geq\nu)=0.
\end{align}
For $\bar{\mathbb{P}}^n_{\k}=\bar{\mathbb{P}}\circ {T^n_{\k}}^{-1}$, $n\in\N$ along with \eqref{ST3}-\eqref{ST4}, it follows that there exists $n_0\in\N$ such that 
\begin{align*}
	&\bar{\mathbb{P}}^{n_0}_{\k}(\omega:\|\mathcal{S}_{\bar{\mathbb{P}}^{n_0}_{\k}\circ\x_0^{-1}}(\x_0(\omega),\omega)-\Y_{\k}\|_{\mathcal{D}}<\nu)\nonumber\\&=\bar{\mathbb{P}}(\omega:\|\mathcal{S}_{\bar{\mathbb{P}}\circ\x_0^{-1}}(\x_0(\omega),T^{n_0}_{\k}(\omega))-\Y_{\k}\|_{\mathcal{D}}<\nu)>0.
\end{align*}
From \eqref{AS5}, we have that $\bar{\mathbb{P}}^{n_0}_{\k}\ll\bar{\mathbb{P}}$ which implies that 
\begin{align*}
	\mathbb{P}(\|\u-\Y_{\k}\|_{\mathcal{D}}<\nu)=\bar{\mathbb{P}}(\omega:\|\mathcal{S}_{\bar{\mathbb{P}}\circ\x_0^{-1}}(\x_0(\omega),\omega)-\Y_{\k}\|_{\mathcal{D}}<\nu)>0.
\end{align*}
Therefore 
\begin{align}\label{ST5}
	\text{supp}(\mathbb{P}\circ\u^{-1})\supset\bar{\mathcal{L}}.
\end{align}
Hence, \eqref{ST1} and \eqref{ST5} imply that $\text{supp}(\mathbb{P}\circ\u^{-1})=\bar{\mathcal{L}}$, as required.
\end{proof}
\section{Proof of Theorem \ref{4.1}}\label{sec5}\setcounter{equation}{0}
The proof of Theorem \ref{4.1} is based on a standard Galerkin approximation scheme. Let $\{f_1,f_2,\cdots,f_m,\cdots\}\subset\D(\A)$ be an orthonormal basis of $\H$ and set $\H_m:=\text{span}\{f_1,f_2,\cdots,f_m\}$. Let $\P_m:\V'\to\H_m$ be given by 
\begin{align}\label{S1}
	\P_m\h=\sum_{k=1}^{m}\langle\h,f_k\rangle f_k, \ \ \ \h\in\V'.
\end{align}
For $\h\in\H$, one can write $\P_m\h=\sum_{k=1}^{m}(\h,f_k) f_k$. For $\k\in\mathrm{L}^2(0,T;\mathrm{K})$, we  consider the following finite dimensional system in $\H_m$:
	\begin{equation}\label{C_SCBF_m}
	\left\{
	\begin{aligned}
		\d\mathrm{U}^{n,m}_{\k}(t)&=-\left[\mu\A\mathrm{U}^{n,m}_{\k}(t)+\B_m(\mathrm{U}^{n,m}_{\k}(t))+\alpha\mathrm{U}^{n,m}_{\k}(t)+\mathcal{C}_m(\mathrm{U}^{n,m}_{\k}(t))\right]\d t\\&\quad + \F_{1,m}(\mathrm{U}^{n,m}_{\k}(t))\Pi_m\d\W(t) +\F_{2,m}(\mathrm{U}^{n,m}_{\k}(t))\dot{\W}^n(t)\d t\\&\quad+\F_{3,m}(\mathrm{U}^{n,m}_{\k}(t))\k(t)\d t- \F_m(\mathrm{U}^{n,m}_{\k}(t))\d t,\\
		\mathrm{U}^{n,m}_{\k}(0)&=\P_m\x,
	\end{aligned}
	\right.
\end{equation}
where $\P_m$ and $\Pi_m$ are given in \eqref{S1} and Hypothesis \ref{HonG1}, respectively, $\B_m=\P_m\B$, $\mathcal{C}_m=\P_m\mathcal{C}$, $\F_m=\P_m\F$, $\F_{i,m}=\P_m\F_i$, $i=1,2,3.$ For any $t\in[0,T]$, $\mathbb{E}\left[\|\dot{\W}^n(t)\|^2_{\mathrm{K}}\right]=\frac{n}{\sigma}$. Also, from Hypothesis \ref{HonG}, we have for $\u_1,\u_2\in\H_m$
\begin{align}\label{S2}
	\langle[\F_2(\u_1)-\F_2(\u_2)]\dot{\W}^n(t),\u_1-\u_2\rangle &\leq \|\F_2(\u_1)-\F_2(\u_2)\|_{\mathcal{L}_2}\|\dot{\W}^n(t)\|_{\mathrm{K}}\|\u_1-\u_2\|_{\H}\nonumber\\&\leq\left[\rho(\u_2)+\|\dot{\W}^n(t)\|^2_{\mathrm{K}}\right]\|\u_1-\u_2\|^2_{\H}.
\end{align}
Similarly, one can show that 
\begin{align}\label{S3}
	\langle[\F_3(\u_1)-\F_3(\u_2)]\k(t),\u_1-\u_2\rangle \leq \left[\rho(\u_2)+\|\k(t)\|^2_{\mathrm{K}}\right]\|\u_1-\u_2\|^2_{\H}.
\end{align}
 It follows from Theorem \ref{LocMon}, Hypotheses \ref{HonG} and \ref{HonG1}, \eqref{S1} and \eqref{S2} that there exists a unique solution $\mathrm{U}^{n,m}_{\k}(\cdot)$ of the system \eqref{C_SCBF_m} (Theorem 3.1.1, \cite{LR}). 
 \vskip 2mm
 \noindent
 \textbf{A-priori energy estimates:}
 We provide an a-priori energy estimate of $\mathrm{U}^{n,m}_{\k}$ which will help us to get the solution for the system \eqref{C_SCBF}.
\begin{lemma}\label{5.1}
Under the assumptions in Theorem \ref{4.1}, we have
\begin{align}\label{5.5}
	&\sup_{n,m\in\N}\mathbb{E}\bigg[\sup_{t\in[0,T]}\|\mathrm{U}^{n,m}_{\k}(t)\|^{p}_{\H}+\int_{0}^{T}\|\mathrm{U}^{n,m}_{\k}(t)\|^{p-2}_{\H}\|\mathrm{U}^{n,m}_{\k}(t)\|^{2}_{\V}\d t\nonumber\\&+\int_{0}^{T}\|\mathrm{U}^{n,m}_{\k}(t)\|^{p-2}_{\H}\|\mathrm{U}^{n,m}_{\k}(t)
	\|^{r+1}_{\wi\L^{r+1}}\d t\bigg]\leq C e^{\int_{0}^{T}(1+\|\k(t)\|^2_{\mathrm{K}})\d t}\bigg(\mathbb{E}[\|\x\|^{p}_{\H}]+1\bigg),
\end{align}
where $C$ is a positive constant.
\end{lemma}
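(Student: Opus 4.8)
The plan is to run a stochastic energy estimate on the finite-dimensional system \eqref{C_SCBF_m} and then pass to the supremum over $n,m$. For fixed $n,m$, \eqref{C_SCBF_m} is a finite-dimensional SDE with locally monotone, coercive coefficients, so Theorem 3.1.1 of \cite{LR} already gives a unique strong solution $\mathrm{U}:=\mathrm{U}^{n,m}_{\k}$ possessing finite moments of every order; this legitimizes every application of It\^o's formula below. First I would apply It\^o's formula to $\|\mathrm{U}(t)\|_{\H}^{p}=\big(\|\mathrm{U}(t)\|_{\H}^{2}\big)^{p/2}$. The leading drift terms are controlled by the structural identities recorded in Section \ref{sec2}: $\langle\A\mathrm{U},\mathrm{U}\rangle=\|\mathrm{U}\|_{\V}^{2}$, $\langle\B_m(\mathrm{U}),\mathrm{U}\rangle=\langle\B(\mathrm{U}),\P_m\mathrm{U}\rangle=b(\mathrm{U},\mathrm{U},\mathrm{U})=0$ by \eqref{b0}, and $\langle\mathcal{C}_m(\mathrm{U}),\mathrm{U}\rangle=\|\mathrm{U}\|_{\wi\L^{r+1}}^{r+1}$. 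Together with the factor $\|\mathrm{U}\|_{\H}^{p-2}$ produced by the chain rule, these yield exactly the two nonnegative dissipative integrals on the left-hand side of \eqref{5.5}, which I would retain to absorb the error terms generated later.

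Next I would dispose of the benign forcing and noise contributions. The It\^o correction and martingale coming from $\F_{1,m}(\mathrm{U})\Pi_m\d\W$ are handled by the growth bound (H.1): the correction is $\leq C\|\mathrm{U}\|_{\H}^{p-2}(1+\|\mathrm{U}\|_{\H}^{2})$, while the stochastic integral is estimated through the Burkholder--Davis--Gundy and Young inequalities so that a fixed fraction of $\E[\sup_{s\leq t}\|\mathrm{U}(s)\|_{\H}^{p}]$ is absorbed into the left-hand side. The control drift $\F_{3,m}(\mathrm{U})\k$ is estimated exactly as in \eqref{S3}, contributing $\big(\rho(\mathrm{U})+\|\k\|_{\mathrm{K}}^{2}\big)\|\mathrm{U}\|_{\H}^{p}$; this is the origin of the factor $\int_0^T(1+\|\k(t)\|_{\mathrm{K}}^2)\d t$ appearing in the exponent after Gronwall. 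The correction term $-\F_m(\mathrm{U})$ is controlled by the growth part of (H$'$.2), $\|\F(\mathrm{U})\|_{\H}^{2}\leq L_2(1+\|\mathrm{U}\|_{\H}^{2})$, which gives $|(\F_m(\mathrm{U}),\mathrm{U})|\leq C(1+\|\mathrm{U}\|_{\H}^2)$.

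The main obstacle will be the drift $\F_{2,m}(\mathrm{U})\dot{\W}^n$, since $\|\dot{\W}^n\|_{\mathrm{K}}$ is of order $(n/\sigma)^{1/2}$ and a naive Cauchy--Schwarz bound would destroy uniformity in $n$. I would treat it precisely as the term $J_5$ in the proof of Theorem \ref{Main}: split $\F_{2,m}(\mathrm{U}(\zeta))$ into $\F_{2,m}\big(\mathrm{U}((\lfloor\zeta/\sigma\rfloor-1)\sigma)\big)$ and the difference $\big[\F_{2,m}(\mathrm{U}(\zeta))-\F_{2,m}(\mathrm{U}((\lfloor\zeta/\sigma\rfloor-1)\sigma))\big]$. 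The first piece, paired with $\dot{\W}^n$, is rewritten as a genuine It\^o integral against $\d\W$ via the identity recalled in the remark preceding Theorem \ref{Main} and then bounded by the BDG inequality; the difference is expanded by the second-order Taylor formula \eqref{MP26} together with \eqref{MP27}--\eqref{MP28}, producing the hierarchy $S_1,\dots,S_6$ whose leading quadratic-in-$\dot{w}^n_k$ diagonal contribution is compensated by the correction term (the structural role of $\tfrac12\widetilde{\Tr}_n$), while the remaining pieces carry the harmless prefactors $n^{a}2^{-bn}$ that stay uniformly bounded in $n$. To avoid circularity I would first localize by $\tau_R=\inf\{t:\|\mathrm{U}(t)\|_{\H}>R\}$, derive the estimate on $[0,t\wedge\tau_R]$ with constants independent of $R,n,m$, and then let $R\to\infty$ by Fatou's lemma. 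Collecting all bounds yields, for $\mathfrak{X}(t)=\sup_{s\leq t}\|\mathrm{U}(s)\|_{\H}^p$, an inequality of the form $\E[\mathfrak{X}(t\wedge\tau_R)]\leq C(\E\|\x\|_{\H}^p+1)+\E\int_0^{t\wedge\tau_R}(1+\|\k\|_{\mathrm{K}}^2)\mathfrak{X}(s)\d s$, and the stochastic Gronwall lemma (Lemma~2 of \cite{GM}, already invoked for \eqref{MP38}) closes the argument and delivers the factor $e^{\int_0^T(1+\|\k\|_{\mathrm{K}}^2)\d t}$ uniformly in $n,m$. The delicate point, and the only place where the Wong--Zakai structure is essential rather than cosmetic, is precisely this compensation in the $\F_2\dot{\W}^n$ term, mirroring the $S_4$ estimate of Theorem \ref{Main}.
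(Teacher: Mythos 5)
Your outline of the It\^o computation for $\|\mathrm{U}^{n,m}_{\k}(t)\|_{\H}^{p}$, the treatment of the $\F_{1,m}$ martingale and It\^o correction, of the control term $\F_{3,m}(\cdot)\k$ (the source of the factor $e^{\int_0^T(1+\|\k(t)\|^2_{\mathrm{K}})\d t}$), and of the drift $\F_m$ via the growth part of (H$'$.2) all match the paper's estimates \eqref{S5} and \eqref{S7}--\eqref{S9}. The genuine gap is your treatment of the crucial term $Q_2=p\int_0^t\|\mathrm{U}^{n,m}_{\k}(\zeta)\|_{\H}^{p-2}\big(\F_{2}(\mathrm{U}^{n,m}_{\k}(\zeta))\dot{\W}^n(\zeta),\mathrm{U}^{n,m}_{\k}(\zeta)\big)\d\zeta$. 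You propose to import the $J_5$ machinery of Theorem \ref{Main} (time shift, second-order Taylor expansion \eqref{MP26}, compensation of the diagonal quadratic term by the correction drift), but that machinery is unavailable here, for two reasons. First, under the hypotheses of Theorem \ref{4.1} the operator $\F_2$ is only assumed to satisfy (H.1)--(H.2) of Hypothesis \ref{HonG}: no Fr\'echet differentiability of $\F_{2,k}$ is assumed, and $\F$ is an arbitrary drift satisfying (H$'$.2) with no structural relation to $\F_2$; in particular $\F$ need not equal $\tfrac12\sum_k\D\F_{2,k}\F_{2,k}$, so there is nothing for your ``diagonal'' Taylor term to be compensated by. The system \eqref{C_SCBF2} is decisive against your plan: there $\F_2=-\G$ and $\F=0$, i.e.\ no correction term at all, yet Lemma \ref{5.1} must cover it in order to yield \eqref{E_U8}. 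Second, even in the special case \eqref{WZ_SCBF}, the bounds \eqref{MP30}--\eqref{MP31} for $S_1,\dots,S_6$ are proved on $[0,\tau_{n,M}]$, where $\tau^{(2)}_{n,M}$ caps $\|\u^n(t)\|_{\H}+\int_0^t\|\u^n(\zeta)\|^2_{\V}\d\zeta+\int_0^t\|\u^n(\zeta)\|^{r+1}_{\wi\L^{r+1}}\d\zeta$ and $\tau^{(3)}_n$ caps $|\dot{w}^n_k|$; the usefulness of these stopping times rests on the moment bound \eqref{E_U5}, i.e.\ on the very lemma you are proving, and the constants $C(T,M)$ depend on the cutoff level $M$. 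Your localization by $\tau_R=\inf\{t:\|\mathrm{U}(t)\|_{\H}>R\}$ does not repair this, since the $S_i$ estimates require the dissipative integrals and the $\dot{w}^n_k$ caps, not merely an $\H$-bound, and letting $R\to\infty$ reintroduces dependence on $n$.

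What the paper actually does for $Q_2$ is simpler and needs no compensation: since $\dot{\W}^n(\zeta)=\frac{1}{\sigma}\int_{(\lfloor\zeta/\sigma\rfloor-1)\sigma}^{\lfloor\zeta/\sigma\rfloor\sigma}\Pi_n\d\W(\xi)$ is a rescaled \emph{past} increment of $\W$, Fubini's theorem rewrites $Q_2$ as an iterated integral in which, for each block $j$, the $\W$-increment over $[(j-1)\sigma\lor 0,j\sigma]$ multiplies a $\zeta$-average of the adapted integrand over $[j\sigma,(j+1)\sigma\land t]$; the BDG inequality plus Cauchy--Schwarz in $\zeta$ then give a quadratic variation of size $p^2\int_0^T\|\mathrm{U}^{n,m}_{\k}(\zeta)\|_{\H}^{2p-2}\|\F_{2}(\mathrm{U}^{n,m}_{\k}(\zeta))\|^2_{\mathcal{L}_2}\d\zeta$, because the factor $\sigma^{-2}$ coming from $\dot{\W}^n$ is exactly cancelled by the two integrations over intervals of length $\sigma$; see \eqref{S6}. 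This bound is uniform in $n$ and $m$ and uses only (H.1), after which Young's inequality absorbs a fraction of $\E[\sup_{t\in[0,T]}\|\mathrm{U}^{n,m}_{\k}(t)\|^p_{\H}]$ and ordinary Gronwall closes the estimate (no stochastic Gronwall is needed, as the coefficient $1+\|\k(t)\|^2_{\mathrm{K}}$ is deterministic). Your diagnosis that a naive Cauchy--Schwarz bound against $\|\dot{\W}^n\|_{\mathrm{K}}\sim(n/\sigma)^{1/2}$ destroys uniformity is correct, but the fix is this martingale reformulation of $Q_2$, not the Taylor/compensation argument, which belongs to the convergence proof of Theorem \ref{Main} rather than to the a priori estimate.
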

\begin{proof}
	Applying finite dimensional It\^o's formula to the process $\|\mathrm{U}^{n,m}_{\k}(t)\|^{p}_{\H}$, we obtain
	\begin{align}\label{S4}
		&\|\mathrm{U}^{n,m}_{\k}(t)\|^{p}_{\H}+p\int_{0}^{t}\|\mathrm{U}^{n,m}_{\k}(\zeta)\|^{p-2}\left(\mu\|\mathrm{U}^{n,m}_{\k}(\zeta)\|^2_{\V}+\alpha\|\mathrm{U}^{n,m}_{\k}(\zeta)\|^2_{\H}+\beta\|\mathrm{U}^{n,m}_{\k}(\zeta)\|^{r+1}_{\wi\L^{r+1}}\right)\d\zeta\nonumber\\&
		=\|\P_m\x\|^{p}_{\H}+\ p\int_{0}^{t}\|\mathrm{U}^{n,m}_{\k}(\zeta)\|^{p-2}\left(\mathrm{U}^{n,m}_{\k}(\zeta),\F_{1,m}(\mathrm{U}^{n,m}_{\k}(\zeta))\Pi_m\d\W(\zeta)\right)\d\zeta\nonumber\\&\quad+p\int_{0}^{t}\|\mathrm{U}^{n,m}_{\k}(\zeta)\|^{p-2}\left(\F_{2}(\mathrm{U}^{n,m}_{\k}(\zeta))\dot{\W}^n(\zeta),\mathrm{U}^{n,m}_{\k}(\zeta)\right)\d\zeta\nonumber\\&\quad+p\int_{0}^{t}\|\mathrm{U}^{n,m}_{\k}(\zeta)\|^{p-2}\left(\F_{3}(\mathrm{U}^{n,m}_{\k}(\zeta))\k(\zeta),\mathrm{U}^{n,m}_{\k}(\zeta)\right)\d\zeta\nonumber\\&\quad-p\int_{0}^{t}\|\mathrm{U}^{n,m}_{\k}(\zeta)\|^{p-2}\left(\F(\mathrm{U}^{n,m}_{\k}(\zeta))\k(\zeta),\mathrm{U}^{n,m}_{\k}(\zeta)\right)\d\zeta\nonumber\\&\quad+\frac{p}{2}\int_{0}^{t}\|\mathrm{U}^{n,m}_{\k}(\zeta)\|^{p-2}\|\F_{1,m}(\mathrm{U}^{n,m}_{\k}(\zeta))\Pi_m\|^2_{\mathcal{L}_2}\d\zeta\nonumber\\&\quad+p(p-2)\int_{0}^{t}\|\mathrm{U}^{n,m}_{\k}(\zeta)\|^{p-4}_{\H}\|(\F_{1,m}(\mathrm{U}^{n,m}_{\k}(\zeta))\Pi_m)^{*}\mathrm{U}^{n,m}_{\k}(\zeta)\|^2_{\H}\d\zeta\nonumber\\&\leq\|\x\|^{\p}_{\H}+\sum_{i=1}^{6}Q_{i}(t,n,m),
	\end{align}
where we have used $\|\P_m\x\|_{\H}\leq\|\x\|_{\H}$. Let us now estimate each term of \eqref{S4} separately.
\vskip 2mm
\noindent
\emph{Estimate for $Q_1(t,n,m)$:} Using BDG inequality, we have
\begin{align}\label{S5}
	\mathbb{E}\left[\sup_{t\in[0,T]}\left|Q_1(s,n,m)\right|\right]\leq\mathbb{E}\left[\frac{1}{6}\sup_{t\in[0,T]}\|\mathrm{U}^{n,m}_{\k}(t)\|^{p}_{\H}+C\int_{0}^{T}\left(\|\mathrm{U}^{n,m}_{\k}(\zeta)\|^{p}_{\H}\d\zeta+1\right)\d\zeta\right].
\end{align}
\vskip 2mm
\noindent
\emph{Estimate for $Q_2(t,n,m)$:} It is easy to see that $\dot{\W}^n(\zeta)=\frac{1}{\sigma}\int_{(\lfloor\frac{\zeta}{\sigma}\rfloor-1)\sigma}^{\lfloor\frac{\zeta}{\sigma}\rfloor\sigma}\Pi_n\d\W(\xi)$. Therefore, by Fubini's theorem, we have 
\begin{align*}
	&Q_2(t,n,m)\nonumber\\&=\int_{0}^{t}\sum_{j=0}^{\lfloor\frac{t}{\sigma}\rfloor}\frac{p}{\sigma}\int_{j\sigma}^{(j+1)\sigma\land t}1_{\{\xi\in[(j-1)\sigma\lor0,j\sigma]\}}\left(\|\mathrm{U}^{n,m}_{\k}(\zeta)\|^{p-2}_{\H}\F_2(\mathrm{U}^{n,m}_{\k}(\zeta))^{*}\mathrm{U}^{n,m}_{\k}(\zeta),\Pi_n\d\W(\xi)\right)\d\zeta.
\end{align*}
Using BDG inequality, Hypothesis \ref{HonG} and Lemma \ref{Young}, we get
\begin{align}\label{S6} 
	&\mathbb{E}\left[\sup_{t\in[0,T]}\left|Q_2(t,n,m)\right|\right]\nonumber\\&\leq\mathbb{E}\left[\int_{0}^{T}\sum_{j=0}^{2^n-1}\frac{p^2}{\sigma}\int_{j\sigma}^{(j+1)\sigma}\d\zeta 1_{\{\xi\in[(j-1)\sigma\lor0,j\sigma]\}}\|\mathrm{U}^{n,m}_{\k}(\zeta)\|^{2p-2}_{\H}\|\F_{2}(\mathrm{U}^{n,m}_{\k}(\zeta))\|^2_{\mathcal{L}_2}\d\xi\right]^{1/2}\nonumber\\&=\mathbb{E}\left[\int_{0}^{T}p^2 \|\mathrm{U}^{n,m}_{\k}(\zeta)\|^{2p-2}_{\H}\|\F_{2}(\mathrm{U}^{n,m}_{\k}(\zeta))\|^2_{\mathcal{L}_2}\d\zeta\right]^{1/2}\nonumber\\&\leq \mathbb{E}\left[C\int_{0}^{T}\|\mathrm{U}^{n,m}_{\k}(\zeta)\|^{2p-2}_{\H}\left(\|\mathrm{U}^{n,m}_{\k}(\zeta)\|^{2}_{\H}+1\right)\d\zeta\right]^{1/2}\nonumber\\&\leq\mathbb{E}\left[\frac{1}{6}\sup_{t\in[0,T]}\|\mathrm{U}^{n,m}_{\k}(t)\|^{p}_{\H}+C\left(\int_{0}^{T}\|\mathrm{U}^{n,m}_{\k}(\zeta)\|^{p}_{\H}\d\zeta+1\right)\right].
\end{align}
\vskip 2mm
\noindent
\emph{Estimate for $Q_3(t,n,m)$:} Using Hypothesis \ref{HonG} and Lemma \ref{Young}, we obtain
\begin{align}\label{S7}
	\mathbb{E}\left[\sup_{t\in[0,T]}\left|Q_3(t,n,m)\right|\right]&\leq \mathbb{E}\left[C\int_{0}^{T}\|\mathrm{U}^{n,m}_{\k}(\zeta)\|^{p-1}_{\H}\left(\|\mathrm{U}^{n,m}_{\k}(\zeta)\|_{\H}+1\right)\|\k(\zeta)\|_{\mathrm{K}}\d\zeta\right]\nonumber\\&\leq C\mathbb{E}\left[\int_{0}^{T}\left(\|\k(\zeta)\|^2_{\mathrm{K}}\|\mathrm{U}^{n,m}_{\k}(\zeta)\|^{p}_{\H}+\|\mathrm{U}^{n,m}_{\k}(\zeta)\|^{p}_{\H}+1\right)\d\zeta\right].
\end{align}
\vskip 2mm
\noindent
\emph{Estimate for $Q_4(t,n,m)$:} Making use of condition (H$'$.1) of Hypothesis \ref{HonG1}, we obtain
\begin{align}\label{S8}
	\mathbb{E}\left[\sup_{t\in[0,T]}\left|Q_4(s,n,m)\right|\right]\leq\mathbb{E}\left[\frac{1}{6}\sup_{t\in[0,T]}\|\mathrm{U}^{n,m}_{\k}(t)\|^{p}_{\H}+C\int_{0}^{T}\left(\|\mathrm{U}^{n,m}_{\k}(\zeta)\|^{p}_{\H}\d\zeta+1\right)\d\zeta\right].
\end{align}
\vskip 2mm
\noindent
\emph{Estimates for $Q_5(t,n,m)$ and $Q_6(t,n,m)$:} Using condition (H.1) of Hypothesis \ref{HonG} and then applying Lemma \ref{Young}, we get
\begin{align}\label{S9}
	\left|Q_5(t,n,m)+Q_6(t,n,m)\right|\leq C\int_{0}^{T}\left(\|\mathrm{U}^{n,m}_{\k}(\zeta)\|^{p}_{\H}+1\right)\d\zeta.
\end{align}
Taking supremum over $[0,T]$ and expectation on both sides of \eqref{S4}, then inserting \eqref{S5}-\eqref{S9} and applying Gronwall's inequality to the final estimate, we obtain \eqref{5.5}, which completes the proof.
\end{proof}
\vskip 2mm
\noindent
\textbf{Weak limits:} Lemma \ref{5.1} implies that 
\begin{align}
	\mathrm{U}^{n,m}_{\k}\in\mathrm{L}^{p}(\Omega; \mathrm{L}^{\infty}(0,T;\H))\cap\mathrm{L}^2(\Omega; \mathrm{L}^2(0,T;\V))\cap\mathrm{L}^{r+1}(\Omega;\mathrm{L}^{r+1}(0,T;\widetilde{\L}^{r+1})).
\end{align}
For $\M_m(\u) = \mu\A\u +\B_m(\u)+\alpha\u+\beta\mathcal{C}_m(\u),$ let us consider
\begin{align}\label{S10}
	&\E\left[\left|\int_0^T\langle\M_m(\mathrm{U}^{n,m}_{\k}(t)),\mathrm{V}(t)\rangle\d t\right|\right]\nonumber\\&\leq \mu\E\left[ \left|\int_0^T(\nabla\mathrm{U}^{n,m}_{\k}(t),\nabla\mathrm{V}(t))\d t\right|\right]+\E\left[\left|\int_0^T\langle \B(\mathrm{U}^{n,m}_{\k}(t),\mathrm{V}(t)),\mathrm{U}^{n,m}_{\k}(t)\rangle\d t\right|\right]\nonumber\\&\quad+\alpha\E\left[ \left|\int_0^T(\mathrm{U}^{n,m}_{\k}(t),\mathrm{V}(t))\d t\right|\right]+\beta\E\left[\left|\int_0^T\langle|\mathrm{U}^{n,m}_{\k}(t)|^{r-1}\mathrm{U}^{n,m}_{\k}(t),\mathrm{V}(t)\rangle\d t\right|\right]\nonumber\\&\leq C\E\left[ \int_0^T\|\mathrm{U}^{n,m}_{\k}(t)\|_{\V}\|\mathrm{V}(t)\|_{\V}\d t\right]+\E\left[\int_0^T\|\mathrm{U}^{n,m}_{\k}(t)\|_{\widetilde{\L}^{r+1}}\|\mathrm{U}^{n,m}_{\k}(t)\|_{\widetilde{\L}^{\frac{2(r+1)}{r-1}}}\|\mathrm{V}(t)\|_{\V}\d t\right]\nonumber\\&\quad+\beta\E\left[\int_0^T\|\mathrm{U}^{n,m}_{\k}(t)\|_{\widetilde{\L}^{r+1}}^{r}\|\mathrm{V}(t)\|_{\widetilde{\L}^{r+1}}\d t\right]\nonumber\\&\leq C \mathbb{E}\left[\left(\int_0^T\|\mathrm{U}^{n,m}_{\k}(t)\|_{\V}^2\d t\right)^{1/2}\left(\int_0^T\|\mathrm{V}(t)\|_{\V}^2\d t\right)^{1/2}\right]\nonumber\\&\quad+\E\left[\left(\int_0^T\|\mathrm{U}^{n,m}_{\k}(t)\|_{\widetilde{\L}^{r+1}}^{r+1}\d t\right)^{\frac{1}{r-1}}\left(\int_0^T\|\mathrm{U}^{n,m}_{\k}(t)\|_{\H}^2\d t\right)^{\frac{r-3}{2(r-1)}}\left(\int_0^T\|\mathrm{V}(t)\|_{\V}^2\d t\right)^{1/2}\right]\nonumber\\&\quad+\beta\E\left[\left(\int_0^T\|\mathrm{U}^{n,m}_{\k}(t)\|_{\widetilde{\L}^{r+1}}^{r+1}\d t\right)^{\frac{r}{r+1}}\left(\int_0^T\|\mathrm{V}(t)\|_{\widetilde{\L}^{r+1}}^{r+1}\d t\right)^{\frac{1}{r+1}}\right]\nonumber\\&\leq C \left\{\E\left(\int_0^T\|\mathrm{U}^{n,m}_{\k}(t)\|_{\V}^2\d t\right)\right\}^{1/2}\left\{\E\left(\int_0^T\|\mathrm{V}(t)\|_{\V}^2\d t\right)\right\}^{1/2}\nonumber\\&\quad+T^{\frac{r-3}{2(r-1)}}\left\{\E\left(\int_0^T\|\mathrm{U}^{n,m}_{\k}(t)\|_{\widetilde{\L}^{r+1}}^{r+1}\d t\right)\right\}^{\frac{1}{r-1}}\left\{\E\left(\sup_{t\in[0,T]}\|\mathrm{U}^{n,m}_{\k}(t)\|_{\H}^2\right)\right\}^{\frac{r-3}{2(r-1)}}\nonumber\\&\quad\times\left\{\E\left(\int_0^T\|\mathrm{V}(t)\|_{\V}^2\d t\right)\right\}^{\frac{1}{2}}+\beta\left\{\E\left(\int_0^T\|\mathrm{U}^{n,m}_{\k}(t)\|_{\widetilde{\L}^{r+1}}^{r+1}\d t\right)\right\}^{\frac{r}{r+1}}\nonumber\\&\quad\times\left\{\E\left(\int_0^T\|\mathrm{V}(t)\|_{\widetilde{\L}^{r+1}}^{r+1}\d t\right)\right\}^{\frac{1}{r+1}},
\end{align}
for $r>3$ and for all $\mathrm{V}\in\mathrm{L}^2(\Omega;\mathrm{L}^2(0,T;\V))\cap\mathrm{L}^{r+1}(\Omega;\mathrm{L}^{r+1}(0,T;\widetilde{\L}^{r+1}))$, where we have used Lemmas \ref{Holder} and \ref{Interpolation}. Similarly for $r=3$, we obtain 
\begin{align}\label{S10*}
	&\E\left[\left|\int_0^T\langle\M_m(\mathrm{U}^{n,m}_{\k}(t)),\mathrm{V}(t)\rangle\d t\right|\right]\nonumber\\&\leq  C \left\{\E\left(\int_0^T\|\mathrm{U}^{n,m}_{\k}(t)\|_{\V}^2\d t\right)\right\}^{1/2}\left\{\E\left(\int_0^T\|\mathrm{V}(t)\|_{\V}^2\d t\right)\right\}^{1/2}\nonumber\\&\quad+\left\{\E\left(\int_0^T\|\mathrm{U}^{n,m}_{\k}(t)\|_{\widetilde{\L}^{4}}^{4}\d t\right)\right\}^{\frac{1}{2}}\left\{\E\left(\int_0^T\|\mathrm{V}(t)\|_{\V}^2\d t\right)\right\}^{\frac{1}{2}}\nonumber\\&\quad+\beta\left\{\E\left(\int_0^T\|\mathrm{U}^{n,m}_{\k}(t)\|_{\widetilde{\L}^{4}}^{4}\d t\right)\right\}^{\frac{3}{4}}\left\{\E\left(\int_0^T\|\mathrm{V}(t)\|_{\widetilde{\L}^{4}}^{4}\d t\right)\right\}^{\frac{1}{4}},
\end{align}
for all $\mathrm{V}\in\mathrm{L}^2(\Omega;\mathrm{L}^2(0,T;\V))\cap\mathrm{L}^{4}(\Omega;\mathrm{L}^{4}(0,T;\widetilde{\L}^{4}))$. The above inequalities \eqref{S10} and \eqref{S10*} along with Lemma \ref{5.1} imply that for all $r\geq3$,
\begin{align}\label{S11}
	\M_m(\mathrm{U}^{n,m}_{\k})\in\mathrm{L}^2(\Omega;\mathrm{L}^2(0,T ;\V'))+\mathrm{L}^{\frac{r+1}{r}}(\Omega;\mathrm{L}^{\frac{r+1}{r}}(0,T;\widetilde{\L}^{\frac{r+1}{r}})).	
\end{align}
By condition (H.1) of Hypothesis \ref{HonG} and Lemma \ref{Young}, we get 
\begin{align}\label{S12}
	\|\F_{2,m}(\mathrm{U}^{n,m}_{\k}(t))\dot{\W}^n(t)\|^{2}_{\H}&\leq\|\F_2(\mathrm{U}^{n,m}_{\k}(t))\|^{2}_{\mathcal{L}_2}\|\dot{\W}^n(t)\|^{2}_{\mathrm{K}}\nonumber\\&\leq L_1\left(1+\|\mathrm{U}^{n,m}_{\k}(t)\|^2_{\H}\right) \|\dot{\W}^n(t)\|^{2}_{\mathrm{K}}\nonumber\\&\leq C \left(1+\|\mathrm{U}^{n,m}_{\k}(t)\|^{p}_{\H}+\|\dot{\W}^n(t)\|^{\frac{2p}{p-2}}_{\mathrm{K}}\right).
\end{align}
From \eqref{AS1}, we infer that $\|\dot{\W}^n(j\sigma)\|_{\mathrm{K}}$ (for $j=1,2,\ldots,2^n$) are independent centered normal random variables with $\mathbb{E}\left[\|\dot{\W}^n(j\sigma)\|^{2}_{\mathrm{K}}\right]=\frac{n}{\sigma},$ which gives 
\begin{align}\label{S13}
	\mathbb{E}\left[\int_{0}^{T}\|\dot{\W}^n(t)\|^{\frac{2p}{p-2}}_{\mathrm{K}}\d t\right]&=\sum_{j=1}^{2^n}\sigma\mathbb{E}\left[\|\dot{\W}^n(j\sigma)\|^{\frac{2p}{p-2}}_{\mathrm{K}}\right]\nonumber\\&\leq C\sigma\sum_{j=1}^{2^n}\left(\mathbb{E}\left[\|\dot{\W}^n(j\sigma)\|^{2}_{\mathrm{K}}\right]\right)^{\frac{p}{p-2}}=C\sigma2^n \left(\frac{n}{\sigma}\right)^{\frac{p}{p-2}}.
\end{align}
From Lemma \ref{5.5}, \eqref{S12}-\eqref{S13} and continuous embedding $\H\hookrightarrow\V'$, we obtain that for each $n\in\N$
\begin{align}\label{S14}
	\F_{2,m}(\mathrm{U}^{n,m}_{\k})\dot{\W}^n\in\mathrm{L}^2(\Omega;\mathrm{L}^2(0,T ;\V')) \text{ uniformly for } m\in\N.
\end{align}
For $\k\in\mathrm{L}^2(0,T;\mathrm{K})$, from condition (H.1) of Hypothesis \ref{HonG}, Lemma \ref{Young} and continuous embedding $\H\hookrightarrow\V'$, we obtain
\begin{align}\label{S15}
	\|\F_{3,m}(\mathrm{U}^{n,m}_{\k}(t))\k(t)\|^{2}_{\H}&\leq\|\F_3(\mathrm{U}^{n,m}_{\k}(t))\|^{2}_{\mathcal{L}_2}\|\k(t)\|^{2}_{\mathrm{K}}\nonumber\\&\leq L_1\left(1+\|\mathrm{U}^{n,m}_{\k}(t)\|^2_{\H}\right) \|\k(t)\|^{2}_{\mathrm{K}}\nonumber\\&\leq C\left(1+\|\mathrm{U}^{n,m}_{\k}(t)\|^{p}_{\H}\right) \|\k(t)\|^{2}_{\mathrm{K}},
\end{align}
which gives that for each $n\in\N$
\begin{align}\label{S16}
	\F_{3,m}(\mathrm{U}^{n,m}_{\k})\k\in\mathrm{L}^2(\Omega;\mathrm{L}^2(0,T ;\V')) \text{ uniformly for } m\in\N.
\end{align}
Similarly, from from condition (H$'$.1) of Hypothesis \ref{HonG1}, Lemma \ref{Young} and continuous embedding $\H\hookrightarrow\V'$, we obtain that for each $n\in\N$
\begin{align}\label{S17}
	\F_{m}(\mathrm{U}^{n,m}_{\k})\in\mathrm{L}^2(\Omega;\mathrm{L}^2(0,T ;\V')) \text{ uniformly for } m\in\N.
\end{align}
Therefore, for each $n\in\N$, we can find a subsequence $m_j$ (depends on $n$) such that $m_j\to\infty$ as $j\to\infty$ and have the following convergence in hand as $j\to\infty$:
\begin{equation}\label{S18}
	\left\{
	\begin{aligned}
		\mathrm{U}^{n,m_j}_{\k}&\xrightharpoonup{w^*} \widetilde{\mathrm{U}}^n_{\k} \text{ in } \mathrm{L}^{p}(\Omega;\mathrm{L}^{\infty}(0,T ;\H)),\\
		\mathrm{U}^{n,m_j}_{\k}&\xrightharpoonup{w} \widetilde{\mathrm{U}}^n_{\k} \text{ in } \mathrm{L}^2(\Omega;\mathrm{L}^2(0,T ;\V)),\\
		\M_{m_j} (\mathrm{U}^{n,m_j}_{\k})&\xrightharpoonup{w} \M^{n}_{0} \text{ in } \mathrm{L}^2(\Omega;\mathrm{L}^2(0,T ;\V'))+\mathrm{L}^{\frac{r+1}{r}}(\Omega;\mathrm{L}^{\frac{r+1}{r}}(0,T;\widetilde{\L}^{\frac{r+1}{r}})),\\
		\F_{2,m_j}(\mathrm{U}^{n,m_j}_{\k})\dot{\W}^n&\xrightharpoonup{w} \mathrm{N}^{n}_{2,\k} \text{ in } \mathrm{L}^2(\Omega;\mathrm{L}^2(0,T ;\V')),\\
		\F_{3,m_j}(\mathrm{U}^{n,m_j}_{\k})\k&\xrightharpoonup{w} \mathrm{N}^{n}_{3,\k} \text{ in } \mathrm{L}^2(\Omega;\mathrm{L}^2(0,T ;\V')),\\
		\F_{m_j}(\mathrm{U}^{n,m_j}_{\k})&\xrightharpoonup{w} \mathrm{N}^{n}_{\k} \text{ in } \mathrm{L}^2(\Omega;\mathrm{L}^2(0,T ;\V')),\\
		\F_{1,m_j}(\mathrm{U}^{n,m_j}_{\k})&\xrightharpoonup{w} \mathrm{N}^{n}_{1,\k} \text{ in } \mathrm{L}^2(\Omega;\mathrm{L}^2(0,T ;\mathcal{L}_2(\mathrm{K},\H))).
	\end{aligned}
	\right.
\end{equation}
The final convergence in \eqref{S18} implies that
\begin{align}\label{S19}
	\int_{0}^{\cdot}\F_{1,m_j}(\mathrm{U}^{n,m_j}_{\k})\Pi_{m_j}\d\W\to\int_{0}^{\cdot}\mathrm{N}^{n}_{1,\k}\d\W \ \text{ weakly in }\  \mathrm{L}^2(\Omega,\mathrm{L}^{\infty}(0,T,\H)).
\end{align}
\vskip 2mm
\noindent
\textbf{It\^o stochastic differential satisfied by $\mathrm{U}^n_{\k}(\cdot)$:} Now following the method given in \cite{LR,MTM2} and using the convergences obtained in \eqref{S18}, we prove that $\mathrm{U}^n_{\k}(\cdot)$  satisfies the system \eqref{C_SCBF}, where $\mathrm{U}^n_{\k}$ is defined by 
\begin{align}\label{S20}
	\mathrm{U}^n_{\k}(t):=\x+\int_{0}^{t}\left\{-\M^{n}_{0}(\zeta)+\mathrm{N}^n_{2,\k}(\zeta)+\mathrm{N}^n_{3,\k}(\zeta)-\mathrm{N}^n_{\k}(\zeta)\right\}\d\zeta+\int_{0}^{t}\mathrm{N}^n_{1,\k}(\zeta)\d\W(\zeta),
\end{align}
in $\V'+\mathrm{L}^{\frac{r+1}{r}}$. Our next aim is to prove that $\mathrm{U}^n_{\k}(t)=\widetilde{\mathrm{U}}^n_{\k}(t)$, $\d t\otimes\mathbb{P}$-a.s. From \eqref{C_SCBF_m} and Fubini's theorem, for all $\upsilon\in\cup_{m\geq1}\H_m\subset\V$, $\varphi\in\mathrm{L}^{\infty}(\Omega;\mathrm{L}^{\infty}(0,T))$, we obtain
\begin{align*}
&\mathbb{E}\left[\int_{0}^{T}\left\langle\widetilde{\mathrm{U}}^n_{\k}(t),\varphi(t)\upsilon\right\rangle\d t\right]\nonumber\\&=\lim_{j\to\infty}\mathbb{E}\left[\int_{0}^{T}\left\langle\mathrm{U}^{n,m_j}_{\k}(t),\varphi(t)\upsilon\right\rangle\d t\right]\nonumber\\&=\lim_{j\to\infty}\mathbb{E}\bigg[\int_{0}^{T}\left(\mathrm{U}^{n,m_j}_{\k}(0),\varphi(t)\upsilon\right)\d t -\int_{0}^{T}\int_{0}^{t}\left(\M_{m_j}(\mathrm{U}^{n,m_j}_{\k}(\zeta)),\varphi(t)\upsilon\right)\d\zeta\d t\nonumber\\&\quad+ \int_{0}^{T}\int_{0}^{t}\left(\F_{2,m_j}(\mathrm{U}^{n,m_j}_{\k}(\zeta)),\varphi(t)\upsilon\right)\d\zeta\d t+\int_{0}^{T}\int_{0}^{t}\left(\F_{3,m_j}(\mathrm{U}^{n,m_j}_{\k}(\zeta)),\varphi(t)\upsilon\right)\d\zeta\d t\nonumber\\&\quad-\int_{0}^{T}\int_{0}^{t}\left(\F_{m_j}(\mathrm{U}^{n,m_j}_{\k}(\zeta)),\varphi(t)\upsilon\right)\d\zeta\d t+\int_{0}^{T}\left(\int_{0}^{t}\F_{1,m_j}(\mathrm{U}^{n,m_j}_{\k}(\zeta))\Pi_{m_j}\d\W(\zeta),\varphi(t)\upsilon\right)\d t\bigg]\nonumber\\&=\lim_{j\to\infty}\mathbb{E}\left[\left(\mathrm{U}^{n,m_j}(0),\upsilon\right)\int_{0}^{T}\varphi(t)\d t\right]-\lim_{j\to\infty}\mathbb{E}\left[\int_{0}^{T}\left(\M_{m_j}(\mathrm{U}^{n,m_j}_{\k}(\zeta)), \upsilon\int_{\zeta}^{T}\varphi(t)\d t \right)\d\zeta\right]\nonumber\\&\quad+\lim_{j\to\infty}\mathbb{E}\left[\int_{0}^{T}\left(\F_{2,m_j}(\mathrm{U}^{n,m_j}_{\k}(\zeta))+\F_{3,m_j}(\mathrm{U}^{n,m_j}_{\k}(\zeta))-\F_{m_j}(\mathrm{U}^{n,m_j}_{\k}(\zeta)), \upsilon\int_{\zeta}^{T}\varphi(t)\d t \right)\d\zeta\right]\nonumber\\&\quad+\lim_{j\to\infty}\mathbb{E}\left[\int_{0}^{T}\left(\int_{0}^{t}\F_{1,m_j}(\mathrm{U}^{n,m_j}_{\k}(\zeta))\Pi_{m_j}\d\W(\zeta),\varphi(t)\upsilon\right)\d t\right]\nonumber\\=&\mathbb{E}\left[\int_{0}^{T}\left\langle\x+\int_{0}^{t}\left\{-\M^{n}_{0}(\zeta)+\mathrm{N}^n_{2,\k}(\zeta)+\mathrm{N}^n_{3,\k}(\zeta)-\mathrm{N}^n_{\k}(\zeta)\right\}\d\zeta+\int_{0}^{t}\mathrm{N}^n_{1,\k}(\zeta)\d\W(\zeta), \varphi(t)\upsilon\right\rangle\d t\right], 
\end{align*}
which implies that 
\begin{align*}
	\widetilde{\mathrm{U}}^n_{\k}(t):=\x+\int_{0}^{t}\left\{-\M^{n}_{0}(\zeta)+\mathrm{N}^n_{2,\k}(\zeta)+\mathrm{N}^n_{3,\k}(\zeta)-\mathrm{N}^n_{\k}(\zeta)\right\}\d\zeta+\int_{0}^{t}\mathrm{N}^n_{1,\k}(\zeta)\d\W(\zeta), 
\end{align*}
in $\V'+\wi\L^{\frac{r+1}{r}}$ for all $t\in[0,T]$. Hence $\mathrm{U}^n_{\k}(t)=\widetilde{\mathrm{U}}^n_{\k}(t)$, $\d t\otimes\mathbb{P}$-a.s. Further, Lemma \ref{5.1} along with Theorem 4.2.5 from \cite{LR} (one can prove energy equality satisfied by $\mathrm{U}^n_{\k}(\cdot)$  using same method used in \cite{MTM2}, where the author obtained the energy equality with the help of eigenfunctions of the Stokes operator and a construction available in \cite{GGP}) implies that $\mathrm{U}^n_{\k}$ is an $\H$-valued continuous $(\mathscr{F}_{t})$-adapted process. Therefore, for the existence of solutions to the system \eqref{C_SCBF}, it only remains to prove that
\begin{align}
	-\left[\mu\A\mathrm{U}^n_{\k}+\B(\mathrm{U}^n_{\k})+\alpha\mathrm{U}^n_{\k}+\mathcal{C}(\mathrm{U}^n_{\k})\right]& +\F_2(\mathrm{U}^n_{\k})\dot{\W}^n+\F_3(\mathrm{U}^n_{\k})\k- \F(\mathrm{U}^n_{\k})\nonumber\\&=-\M^{n}_{0}+\mathrm{N}^n_{2,\k}+\mathrm{N}^n_{3,\k}-\mathrm{N}^n_{\k},\label{S21}\\
	\F_1(\mathrm{U}^n_{\k})&=\mathrm{N}^n_{1,\k}, \ \ \ \ \ \d t\otimes\mathbb{P}.\label{S22}
\end{align}
Let $\Phi$ and $\rho$ be defined by \eqref{MP36} and Hypothesis \ref{HonG}, respectively and set 
\begin{align*}
	\mathcal{M}:=\biggl\{\phi:\phi \text{ is }& \V\cap\widetilde{\L}^{r+1}\text{-valued } \mathscr{F}_t\text{-adapted process,}\ \ \\& \mathbb{E}\left[\int_{0}^{T}\Phi(\phi(\zeta))\d\zeta\right]<\infty \text{ and } \mathbb{E}\left[\int_{0}^{T}\rho(\phi(\zeta))\d\zeta\right]<\infty\biggr\}.
\end{align*}
For $\phi\in\mathrm{L}^{p}(\Omega; \mathrm{L}^{\infty}(0,T;\H))\cap\mathrm{L}^2(\Omega; \mathrm{L}^2(0,T;\V))\cap\mathrm{L}^{r+1}(\Omega;\mathrm{L}^{r+1}(0,T;\widetilde{\L}^{r+1}))\cap\mathcal{M}$, an application of finite dimensional It\^o's formula yields 
\begin{align*}
&	\mathbb{E}\bigg[e^{-\int_{0}^{t}\left(2\Phi(\phi(\zeta))+5\rho(\phi(\zeta))+\|\dot{\W}^n(\zeta)\|^2_{\mathrm{K}}+\|\k(\zeta)\|^2_{\mathrm{K}}\right)\d\zeta}\|\mathrm{U}^{n,m_j}_{\k}(t)\|^2_{\H}\bigg]-\mathbb{E}\bigg[\|\mathrm{P}_m\x\|^2_{\H}\bigg]\nonumber\\&=\mathbb{E}\bigg[\int_{0}^{t}e^{-\int_{0}^{s}\left(2\Phi(\phi(\zeta))+5\rho(\phi(\zeta))+\|\dot{\W}^n(\zeta)\|^2_{\mathrm{K}}+\|\k(\zeta)\|^2_{\mathrm{K}}\right)\d\zeta}\bigg\{2\big\langle\M(\mathrm{U}^{n,m_j}_{\k}(s)),\mathrm{U}^{n,m_j}_{\k}(s)\big\rangle\nonumber\\&\qquad+2\big(\F_{2}(\mathrm{U}^{n,m_j}_{\k}(s))\dot{\W}^n(s)+\F_{3}(\mathrm{U}^{n,m_j}_{\k}(s))\k(s)-\F(\mathrm{U}^{n,m_j}_{\k}(s)),\mathrm{U}^{n,m_j}_{\k}(s)\big)\nonumber\\&\qquad+\|\F_{1,m_j}(\mathrm{U}^{n,m_j}_{\k}(s))\Pi_{m_j}\|^2_{\mathcal{L}_2}\nonumber\\&\qquad-\big(2\Phi(\phi(s))+5\rho(\phi(s))+\|\dot{\W}^n(s)\|^2_{\mathrm{K}}+\|\k(s)\|^2_{\mathrm{K}}\big)\|\mathrm{U}^{n,m_j}_{\k}(s)\|^2_{\H}\bigg\}\d s\bigg]\nonumber\\&\leq \mathbb{E}\bigg[\int_{0}^{t}e^{-\int_{0}^{s}\left(2\Phi(\phi(\zeta))+5\rho(\phi(\zeta))+\|\dot{\W}^n(\zeta)\|^2_{\mathrm{K}}+\|\k(\zeta)\|^2_{\mathrm{K}}\right)\d\zeta}\nonumber\\&\qquad\times\bigg\{2\big\langle\M(\mathrm{U}^{n,m_j}_{\k}(s))-\M(\phi(s)),\mathrm{U}^{n,m_j}_{\k}(s)-\phi(s)\big\rangle+\|\F_{1}(\mathrm{U}^{n,m_j}_{\k}(s))-\F_1(\phi(s))\|^2_{\mathcal{L}_2}\nonumber\\&\qquad+2\big(\left[\F_{2}(\mathrm{U}^{n,m_j}_{\k}(s))-\F_2(\phi(s))\right]\dot{\W}^n(s)+\left[\F_{3}(\mathrm{U}^{n,m_j}_{\k}(s))-\F_3(\phi(s))\right]\k(s)\nonumber\\&\qquad-\left[\F(\mathrm{U}^{n,m_j}_{\k}(s))-\F(\phi(s))\right],\mathrm{U}^{n,m_j}_{\k}(s)-\phi(s)\big)\nonumber\\&\qquad-\big(2\Phi(\phi(s))+5\rho(\phi(s))+\|\dot{\W}^n(s)\|^2_{\mathrm{K}}+\|\k(s)\|^2_{\mathrm{K}}\big)\|\mathrm{U}^{n,m_j}_{\k}(s)-\phi(s)\|^2_{\H}\bigg\}\d s\bigg]\nonumber\\&
+\mathbb{E}\bigg[\int_{0}^{t}e^{-\int_{0}^{s}\left(2\Phi(\phi(\zeta))+5\rho(\phi(\zeta))+\|\dot{\W}^n(\zeta)\|^2_{\mathrm{K}}+\|\k(\zeta)\|^2_{\mathrm{K}}\right)\d\zeta}\nonumber\\&\qquad\times\bigg\{2\big\langle\M(\mathrm{U}^{n,m_j}_{\k}(s))-\M(\phi(s)),\phi(s)\big\rangle+2\big\langle\M(\phi(s)),\mathrm{U}^{n,m_j}_{\k}(s)\big\rangle\nonumber\\&\qquad+2\big(\left[\F_{2}(\mathrm{U}^{n,m_j}_{\k}(s))-\F_2(\phi(s))\right]\dot{\W}^n(s),\phi(s)\big)+2\big(\F_{2}(\phi(s))\dot{\W}^n(s),\mathrm{U}^{n,m_j}_{\k}(s)\big)\nonumber\\&\qquad+2\big(\left[\F_{3}(\mathrm{U}^{n,m_j}_{\k}(s))-\F_3(\phi(s))\right]\k(s),\phi(s)\big)+2\big(\F_3(\phi(s))\k(s),\mathrm{U}^{n,m_j}_{\k}(s)\big)\nonumber\\&\qquad-2\big(\F(\mathrm{U}^{n,m_j}_{\k}(s))-\F(\phi(s)),\phi(s)\big)-2\big(\F(\phi(s)),\mathrm{U}^{n,m_j}_{\k}(s)\big)\nonumber\\&\qquad-\|\F_{1}(\phi(s))\|^2_{\mathcal{L}_2}+2\big\langle\F_1(\mathrm{U}^{n,m_j}_{\k}(s)),\F_1(\phi(s))\big\rangle_{\mathcal{L}_2}\nonumber\\&\qquad-2\big(2\Phi(\phi(s))+5\rho(\phi(s))+\|\dot{\W}^n(s)\|^2_{\mathrm{K}}+\|\k(s)\|^2_{\mathrm{K}}\big)\big(\mathrm{U}^{n,m_j}_{\k}(s),\phi(s)\big)\nonumber\\&\qquad+\big(2\Phi(\phi(s))+5\rho(\phi(s))+\|\dot{\W}^n(s)\|^2_{\mathrm{K}}+\|\k(s)\|^2_{\mathrm{K}}\big)\|\phi(s)\|^2_{\H}\bigg\}\d s\bigg].
\end{align*}
For any non-negative $\psi\in\mathrm{L}^{\infty}(0,T)$, we have
\begin{align*}
	&\mathbb{E}\bigg[\int_{0}^{T}\psi(t)\|\mathrm{U}^n_{\k}(t)\|^2_{\H}\d t\bigg]\\&=\lim_{j\to\infty}	\mathbb{E}\bigg[\int_{0}^{T}\psi(t)\big\langle\mathrm{U}^n_{\k}(t),\mathrm{U}^{n,m_j}_{\k}(t)\big\rangle\d t\bigg]\\&\leq	\left(\mathbb{E}\bigg[\int_{0}^{T}\psi(t)\|\mathrm{U}^n_{\k}(t)\|^2_{\H}\d t\bigg]\right)^{1/2}\liminf_{j\to\infty}	\left(\mathbb{E}\bigg[\int_{0}^{T}\psi(t)\|\mathrm{U}^{n,m_j}_{\k}(t)\|^2_{\H}\d t\bigg]\right)^{1/2}.
\end{align*}
With the help of Theorem \ref{LocMon}, Hypotheses \ref{HonG} and \ref{HonG1}, we obtain 
\begin{align}\label{S23}
	&\mathbb{E}\bigg[\int_{0}^{T}\psi(t)\left\{e^{-\int_{0}^{t}\left(2\Phi(\phi(\zeta))+5\rho(\phi(\zeta))+\|\dot{\W}^n(\zeta)\|^2_{\mathrm{K}}+\|\k(\zeta)\|^2_{\mathrm{K}}\right)\d\zeta}\|\mathrm{U}^{n}_{\k}(t)\|^2_{\H}-\|\x\|^2_{\H}\right\}\d t\bigg]\nonumber\\&\leq	\liminf_{j\to\infty}\mathbb{E}\bigg[\int_{0}^{T}\psi(t)\left\{e^{-\int_{0}^{t}\left(2\Phi(\phi(\zeta))+5\rho(\phi(\zeta))+\|\dot{\W}^n(\zeta)\|^2_{\mathrm{K}}+\|\k(\zeta)\|^2_{\mathrm{K}}\right)\d\zeta}\|\mathrm{U}^{n,m_j}_{\k}(t)\|^2_{\H}-\|\mathrm{P}_m\x\|^2_{\H}\right\}\d t\bigg]\nonumber\\&\leq\mathbb{E}\bigg[\int_{0}^{T}\psi(t)\bigg(\int_{0}^{t}e^{-\int_{0}^{s}\left(2\Phi(\phi(\zeta))+5\rho(\phi(\zeta))+\|\dot{\W}^n(\zeta)\|^2_{\mathrm{K}}+\|\k(\zeta)\|^2_{\mathrm{K}}\right)\d\zeta}\nonumber\\&\qquad\times\bigg\{2\big\langle\M_0^n(s)-\M(\phi(s)),\phi(s)\big\rangle+2\big\langle\M(\phi(s)),\mathrm{U}^{n}_{\k}(s)\big\rangle\nonumber\\&\qquad+2\big\langle\mathrm{N}_{2,\k}^n(s)-\F_2(\phi(s))\dot{\W}^n(s),\phi(s)\big\rangle+2\big\langle\F_{2}(\phi(s))\dot{\W}^n(s),\mathrm{U}^{n}_{\k}(s)\big\rangle\nonumber\\&\qquad+2\big\langle\mathrm{N}_{3,\k}^n(s)-\F_3(\phi(s))\k(s),\phi(s)\big\rangle+2\big\langle\F_3(\phi(s))\k(s),\mathrm{U}^{n}_{\k}(s)\big\rangle\nonumber\\&\qquad-2\big\langle\mathrm{N}_{\k}^n(s)-\F(\phi(s)),\phi(s)\big\rangle-2\big\langle\F(\phi(s)),\mathrm{U}^{n}_{\k}(s)\big\rangle\nonumber\\&\qquad-\|\F_{1}(\phi(s))\|^2_{\mathcal{L}_2}+2\big\langle\mathrm{N}_{1,\k}^n(s),\F_1(\phi(s))\big\rangle_{\mathcal{L}_2}\nonumber\\&\qquad-2\big(2\Phi(\phi(s))+5\rho(\phi(s))+\|\dot{\W}^n(s)\|^2_{\mathrm{K}}+\|\k(s)\|^2_{\mathrm{K}}\big)\big\langle\mathrm{U}^{n}_{\k}(s),\phi(s)\big\rangle\nonumber\\&\qquad+\big(2\Phi(\phi(s))+5\rho(\phi(s))+\|\dot{\W}^n(s)\|^2_{\mathrm{K}}+\|\k(s)\|^2_{\mathrm{K}}\big)\|\phi(s)\|^2_{\H}\bigg\}\d s\bigg)\d t\bigg].
\end{align}
From \eqref{S20} using the energy equality established in \cite{MTM2}, we also have
\begin{align*}
	&\mathbb{E}\bigg[e^{-\int_{0}^{t}\left(2\Phi(\phi(\zeta))+5\rho(\phi(\zeta))+\|\dot{\W}^n(\zeta)\|^2_{\mathrm{K}}+\|\k(\zeta)\|^2_{\mathrm{K}}\right)\d\zeta}\|\mathrm{U}^{n}_{\k}(t)\|^2_{\H}-\|\x\|^2_{\H}\bigg]\nonumber\\&=\mathbb{E}\bigg[\int_{0}^{t}e^{-\int_{0}^{s}\left(2\Phi(\phi(\zeta))+5\rho(\phi(\zeta))+\|\dot{\W}^n(\zeta)\|^2_{\mathrm{K}}+\|\k(\zeta)\|^2_{\mathrm{K}}\right)\d\zeta}\nonumber\\&\qquad\times\bigg\{2\big\langle\M_0^n(s)+\mathrm{N}^n_{1,\k}(s)+\mathrm{N}^n_{2,\k}(s)-\mathrm{N}^n_{\k}(s),\mathrm{U}^n_{\k}(s)\big\rangle+\|\mathrm{N}^n_{1,\k}(s)\|^2_{\mathcal{L}_2}\nonumber\\&\qquad-\left(2\Phi(\phi(s))+5\rho(\phi(s))+\|\dot{\W}^n(s)\|^2_{\mathrm{K}}+\|\k(s)\|^2_{\mathrm{K}}\right)\|\mathrm{U}^{n}_{\k}(s)\|^2_{\H}\bigg\}\d s\bigg],
\end{align*}
which together with \eqref{S23} gives
\begin{align}\label{S24}
	0&\geq\mathbb{E}\bigg[\int_{0}^{T}\psi(t)\bigg(\int_{0}^{t}e^{-\int_{0}^{s}\left(2\Phi(\phi(\zeta))+5\rho(\phi(\zeta))+\|\dot{\W}^n(\zeta)\|^2_{\mathrm{K}}+\|\k(\zeta)\|^2_{\mathrm{K}}\right)\d\zeta}\nonumber\\&\qquad\times\bigg\{2\big\langle\M_0^n(s)-\M(\phi(s)),\mathrm{U}^n_{\k}(s)-\phi(s)\big\rangle+2\big\langle\mathrm{N}_{2,\k}^n(s)-\F_2(\phi(s))\dot{\W}^n(s)+\mathrm{N}_{3,\k}^n(s)\nonumber\\&\qquad-\F_3(\phi(s))\k(s)-\mathrm{N}_{\k}^n(s)+\F(\phi(s)),\mathrm{U}^n_{\k}(s)-\phi(s)\big\rangle+\|\F_{1}(\phi(s))-\mathrm{N}^n_{1,k}(s)\|^2_{\mathcal{L}_2}\nonumber\\&\qquad-\big(2\Phi(\phi(s))+5\rho(\phi(s))+\|\dot{\W}^n(s)\|^2_{\mathrm{K}}+\|\k(s)\|^2_{\mathrm{K}}\big)\|\mathrm{U}^n_{\k}(s)-\phi(s)\|^2_{\H}\bigg\}\d s\bigg)\d t\bigg].
\end{align}
Lemma \ref{5.1} and Hypothesis \ref{HonG} imply that 
$$\mathrm{U}^n_{\k}\in\mathrm{L}^{p}(\Omega; \mathrm{L}^{\infty}(0,T;\H))\cap\mathrm{L}^2(\Omega; \mathrm{L}^2(0,T;\V))\cap\mathrm{L}^{r+1}(\Omega;\mathrm{L}^{r+1}(0,T;\widetilde{\L}^{r+1}))\cap\mathcal{M}.$$
Thus, in \eqref{S24}, put $\phi=\mathrm{U}^n_{\k}-\varepsilon\tilde{\phi}\v$ where $\tilde{\phi}\in\mathrm{L}^{\infty}([0,T]\times\Omega;\d t\otimes\mathbb{P};\R)$ and $\v\in\V\cap\widetilde{\L}^{r+1}$, then divide it by $\varepsilon$ and passing limit $\varepsilon\to0$, we reach at
\begin{align}\label{S25}
	0&\geq\mathbb{E}\bigg[\int_{0}^{T}\psi(t)\bigg(\int_{0}^{t}e^{-\int_{0}^{s}\left(2\Phi(\phi(\zeta))+5\rho(\phi(\zeta))+\|\dot{\W}^n(\zeta)\|^2_{\mathrm{K}}+\|\k(\zeta)\|^2_{\mathrm{K}}\right)\d\zeta}\tilde{\phi}(s)\nonumber\\&\qquad\times\bigg\{2\big\langle\M_0^n(s)-\M(\phi(s)),\v\big\rangle+2\big\langle\mathrm{N}_{2,\k}^n(s)-\F_2(\phi(s))\dot{\W}^n(s)+\mathrm{N}_{3,\k}^n(s)\nonumber\\&\qquad-\F_3(\phi(s))\k(s)-\mathrm{N}_{\k}^n(s)+\F(\phi(s)),\v\big\rangle+\|\F_{1}(\phi(s))-\mathrm{N}^n_{1,k}(s)\|^2_{\mathcal{L}_2}\bigg\}\d s\bigg)\d t\bigg].
\end{align}
Since $\psi$ and $\tilde{\phi}$ are arbitrary, we finally obtain \eqref{S21} and \eqref{S22} from \eqref{S25}, and we conclude that $\mathrm{U}^n_{\k}$ are solutions of the system \ref{C_SCBF}. To obtain \eqref{E_U6}, we repeat the method used in the proof of Lemma \ref{5.1}, which completes the existence of solution of the system \eqref{C_SCBF}.

\vskip 2mm
\noindent
\textbf{Uniqueness:} For any $n\in\N$ given, let $\mathrm{U}^n_{1,\k}(\cdot), \mathrm{U}^n_{2,\k}(\cdot)$ be two solutions to the system \eqref{C_SCBF} with initial data $\x_1$ and $\x_2$ respectively. For $N>0$, we define 
\begin{align*}
	\tau_N^1&=\inf_{0\leq t\leq T}\Big\{t:\|\mathrm{U}^n_{1,\k}(t)\|_{\H}^2+\int_0^t\|\mathrm{U}^n_{1,\k}(s)\|_{\V}^2\d s\geq N\Big\},\\ \tau_N^2&=\inf_{0\leq t\leq T}\Big\{t:\|\mathrm{U}^n_{2,\k}(t)\|_{\H}^2+\int_0^t\|\mathrm{U}^n_{1,\k}(s)\|_{\V}^2\d s\geq N\Big\},\\ \tau_N:&=\tau_N^1\wedge\tau_N^2.
\end{align*}
 With the help of energy estimates \eqref{E_U6}, one can show that $\tau_N\to T$ as $N\to\infty$, $\mathbb{P}$-a.s. (see Proposition 3.5, \cite{MTM2}). Let us define $\mathcal{U}^n(\cdot):=\mathrm{U}^n_{1,\k}(\cdot)-\mathrm{U}^n_{2,\k}(\cdot)$. Then, $\mathcal{U}(\cdot)$ satisfies the following system:
\begin{equation}
	\left\{
	\begin{aligned}
		\d\mathcal{U}^n(t)&=-\left[\mu \A\mathcal{U}^n(t)+\alpha\mathcal{U}^n(t)+\B(\mathrm{U}^n_{1,\k}(t))-\B(\mathrm{U}^n_{2,\k}(t))+\beta(\mathcal{C}(\mathrm{U}^n_{1,\k}(t))-\mathcal{C}(\mathrm{U}^n_{2,\k}(t)))\right]\d t\\&\quad+\left[\mathrm{F}_1(\mathrm{U}^n_{1,\k})-\mathrm{F}_1(\mathrm{U}^n_{2,\k})\right]\d\W(t)+\left[\mathrm{F}_2(\mathrm{U}^n_{1,\k})-\mathrm{F}_2(\mathrm{U}^n_{2,\k})\right]\dot{\W}^n(t)\d t\\&\quad+\left[\mathrm{F}_3(\mathrm{U}^n_{1,\k})-\mathrm{F}_3(\mathrm{U}^n_{2,\k})\right]\k(t)\d t-\left[\mathrm{F}(\mathrm{U}^n_{1,\k})-\mathrm{F}(\mathrm{U}^n_{2,\k})\right]\d t,\\
		\mathcal{U}^n(0)&=\x_1-\x_2,
	\end{aligned}
	\right.
\end{equation}
in $\V'+\wi\L^{\frac{r+1}{r}}$. Let us take 
\begin{align*}
	\mathfrak{F}(t):=\int_{0}^{t}\left(2\Phi(\mathrm{U}^n_{2,\k}(\zeta))+5\rho(\mathrm{U}^n_{2,\k}(\zeta))+\|\dot{\W}^n(\zeta)\|^2_{\mathrm{K}}+\|\k(\zeta)\|^2_{\mathrm{K}}\right)\d\zeta,
\end{align*}
so that 
\begin{align*}
	\mathfrak{F}'(t)=2\Phi(\mathrm{U}^n_{2,\k}(t))+5\rho(\mathrm{U}^n_{2,\k}(t))+\|\dot{\W}^n(t)\|^2_{\mathrm{K}}+\|\k(t)\|^2_{\mathrm{K}}, \ \text{ for a. e. }\ t\in[0,T]. 
\end{align*}
Applying energy equality (see \cite{MTM2}, for a proof) to the process $e^{-\mathfrak{F}(t)}\|\mathcal{U}(t)\|^2_{\H}$ and using Theorem \ref{LocMon}, Hypotheses \ref{HonG} and \ref{HonG1}, we obtain 
\begin{align}\label{ITO}
	&e^{-\mathfrak{F}(\t)}\|\mathcal{U}^n(\s)\|_{\H}^2-\|\mathcal{U}^n(0)\|_{\H}^2\nonumber\\&= -2\int_0^{\t}e^{-\mathfrak{F}(s)}\langle\M(\mathrm{U}^n_{1,\k}(s))-\M(\mathrm{U}^n_{2,\k}(s)),\mathcal{U}^n(s)\rangle\d s\nonumber\\&\quad +2\int_0^{\t}e^{-\mathfrak{F}(s)}\left(\left[\mathrm{F}_2(\mathrm{U}^n_{1,\k}(s))-\mathrm{F}_2(\mathrm{U}^n_{2,\k}(s))\right]\dot{\W}^n(s),\mathcal{U}^n(s)\right)\d s\nonumber\\&\quad+2\int_0^{\t}e^{-\mathfrak{F}(s)}\left(\left[\mathrm{F}_3(\mathrm{U}^n_{1,\k}(s))-\mathrm{F}_3(\mathrm{U}^n_{2,\k}(s))\right]\k(s),\mathcal{U}^n(s)\right)\d s\nonumber\\&\quad+2\int_0^{\t}e^{-\mathfrak{F}(s)}\left(\mathrm{F}(\mathrm{U}^n_{2,\k}(s))-\mathrm{F}(\mathrm{U}^n_{1,\k}(s)),\mathcal{U}^n(s)\right)\d s\nonumber\\&\quad+\int_{0}^{\t}e^{-\mathfrak{F}(s)}\|\mathrm{F}_1(\mathrm{U}^n_{1,\k}(s))-\mathrm{F}_1(\mathrm{U}^n_{2,\k}(s))\|^2_{\mathcal{L}_2}\d s\nonumber\\&\quad+2\int_0^{\t}e^{-\mathfrak{F}(s)}\left(\left[\mathrm{F}_1(\mathrm{U}^n_{1,\k}(s))-\mathrm{F}_1(\mathrm{U}^n_{2,\k}(s))\right]\d\W(s),\mathcal{U}^n(s)\right)\d s-\int_{0}^{\t}\mathfrak{F}'(s)\|\mathcal{U}^n(s)\|^2_{\H}\d s\nonumber\\
	&\leq\int_0^{\t}e^{-\mathfrak{F}(s)}\mathfrak{F}'(s)\|\mathcal{U}^n(s)\|^2_{\H}\d s-\int_{0}^{\t}\mathfrak{F}'(s)\|\mathcal{U}^n(s)\|^2_{\H}\d s\nonumber\\&\quad+2\int_0^{\t}e^{-\mathfrak{F}(s)}\left(\left[\mathrm{F}_1(\mathrm{U}^n_{1,\k}(s))-\mathrm{F}_1(\mathrm{U}^n_{2,\k}(s))\right]\d\W(s),\mathcal{U}^n(s)\right)\d s.
\end{align}
Taking expectation in \eqref{ITO} and using the fact that the final term in \eqref{ITO} is a martingale, we obtain
\begin{align*}
&	\mathbb{E}\bigg[e^{-\mathfrak{F}(\t)}\|\mathrm{U}^n_{1,\k}(\t)-\mathrm{U}^n_{2,\k}(\t)\|^2_{\H}\bigg]\leq\mathbb{E}\bigg[\|\x_1-\x_2\|^2_{\H}\bigg].
\end{align*}
Thus the initial data  $\mathrm{U}^n_{1,\k}(0)=\mathrm{U}^n_{2,\k}(0)=\x$ yields to $\mathcal{U}^n(\t)=0$, $\mathbb{P}$-a.s. But we know that $\tau_N\to T$, $\mathbb{P}$-a.s. which gives $\mathcal{U}^n(t)=0$ and hence $\mathrm{U}^n_{1,\k}(t) = \mathrm{U}^n_{2,\k}(t)$, $\mathbb{P}$-a.s., for all $t \in[0, T ]$, which completes the proof of uniqueness.

	\medskip\noindent
{\bf Acknowledgments:}    The first author would like to thank the Council of Scientific $\&$ Industrial Research (CSIR), India for financial assistance (File No. 09/143(0938)/2019-EMR-I).  M. T. Mohan would  like to thank the Department of Science and Technology (DST), Govt of India for Innovation in Science Pursuit for Inspired Research (INSPIRE) Faculty Award (IFA17-MA110).

\end{document}